\documentclass[a4paper,11pt]{article}

\usepackage[
    a4paper,
    top=1.05in,
    bottom=1.05in,
    left=1.00in,
    right=1.00in
]{geometry}
\usepackage[T1]{fontenc}

\usepackage{amsmath,amsthm,mathtools}
\usepackage{amssymb,mathrsfs}
\usepackage{newtxtext,newtxmath}
\usepackage{microtype}
\microtypesetup{protrusion=true,expansion=true}
\allowdisplaybreaks[4]

\usepackage{enumitem}
\setlist[enumerate]{leftmargin=2.5em,itemsep=0.25em,topsep=0.35em}
\setlist[itemize]{leftmargin=2.2em,itemsep=0.25em,topsep=0.35em}
\usepackage{graphicx}

\usepackage{titlesec}
\titleformat{\section}{\normalfont\Large\bfseries}{\thesection}{0.75em}{}
\titleformat{\subsection}{\normalfont\large\bfseries}{\thesubsection}{0.75em}{}
\titlespacing*{\section}{0pt}{2.6ex plus 0.8ex minus 0.2ex}{1.2ex plus 0.2ex}
\titlespacing*{\subsection}{0pt}{2.0ex plus 0.6ex minus 0.2ex}{0.9ex plus 0.2ex}

\usepackage{fancyhdr}
\fancypagestyle{plain}{%
  \fancyhf{}%
  \fancyfoot[C]{\thepage}%
}

\usepackage[dvipsnames]{xcolor}
\definecolor{linkblue}{RGB}{20,67,128}
\definecolor{citegreen}{RGB}{25,105,80}
\definecolor{urlpurple}{RGB}{100,55,125}
\usepackage[colorlinks=true,
            linkcolor=linkblue,
            citecolor=citegreen,
            urlcolor=urlpurple,
            plainpages=false]{hyperref}
\usepackage{bookmark}
\bookmarksetup{depth=2}
\numberwithin{equation}{section}
\makeatletter
\renewcommand{\theequation}{\thesection.\ifnum\value{equation}<10 0\fi\arabic{equation}}
\makeatother

\newtheorem{theorem}{Theorem}[section]
\newtheorem{proposition}[theorem]{Proposition}
\newtheorem{lemma}[theorem]{Lemma}
\newtheorem{corollary}[theorem]{Corollary}
\theoremstyle{definition}
\newtheorem{definition}[theorem]{Definition}
\theoremstyle{remark}
\newtheorem{remark}[theorem]{Remark}

\newcommand{\Rm}{\operatorname{Rm}}
\newcommand{\Ric}{\mathrm{Ric}}
\newcommand{\Id}{\operatorname{Id}}

\newcommand{\Reg}{\mathcal R}
\newcommand{\Time}{\mathfrak t}
\newcommand{\dd}{\,\mathrm d}
\newcommand{\dV}{\mathrm dV}
\newcommand{\Vol}{\operatorname{Vol}}
\newcommand{\Area}{\operatorname{Area}}
\newcommand{\dist}{\operatorname{dist}}
\newcommand{\rRm}{r_{\Rm}}
\newcommand{\Wone}{d_{W_1}}
\newcommand{\Act}{\mathcal A}
\newcommand{\calX}{\mathcal X}
\newcommand{\eps}{\epsilon}
\newcommand{\muo}{\boldsymbol\mu}
\newcommand{\Jac}{\operatorname{Jac}}

\title{On the tangent flows of noncollapsed Ricci flow limit spaces}
\author{Yu Li\qquad and\qquad Bing Wang}
\date{}

\begin{document}
\maketitle

\begin{abstract}
We study tangent flows of noncollapsed Ricci flow limit spaces in the
sense of Fang--Li. We prove that the regular part of every negative
time slice is geodesically convex: any minimizing geodesic that meets
the regular part has its entire interior contained in it. We also
identify each negative time slice with the metric completion of its
regular Riemannian part, thereby identifying the negative-time tangent
flow with its associated metric soliton.
\end{abstract}

\tableofcontents

\section{Introduction}
\label{sec:introduction}

Tangent flows model singularities of the Ricci flow. Perelman's
entropy monotonicity underlies their Ricci shrinker structure~\cite{Perelman2002}. Much of the earlier work concerns
blow-ups at the first singular time $T<\infty$ of a smooth Ricci
flow under curvature assumptions. Under the Type-I bound
$|\Rm|\le C(T-t)^{-1}$, Naber~\cite{NaberTypeI} established the
Ricci shrinker structure of dilation limits.
Enders--M\"uller--Topping~\cite{EndersMullerTopping} proved that
blow-ups at singular points subconverge to nonflat Ricci shrinkers and identified the corresponding singular
sets. Mantegazza--M\"uller~\cite{MantegazzaMuller} related the
limiting entropy density at a singular point to the entropy of
the shrinker obtained by blowing up there.

In the K\"ahler setting, Perelman established fundamental estimates
for the normalized K\"ahler--Ricci flow in the anticanonical class
of a Fano manifold, including a uniform scalar curvature bound.
For the corresponding unnormalized flow, this yields the scalar
Type-I estimate $|\mathrm R|\le C(T-t)^{-1}$.
Recently, Jian--Song~\cite[Theorem~1.1]{JianSongTypeI} established
this scalar Type-I estimate for finite-time singularities of the
K\"ahler--Ricci flow on arbitrary compact K\"ahler manifolds.
Building on Perelman's estimates,
Chen--Wang~\cite{ChenWangA,ChenWangB} developed a structure and
weak-compactness theory for polarized K\"ahler--Ricci flows and
established subsequential convergence on Fano manifolds.
Their approach rests on the compactness of a moduli space of
bubble models consisting of noncollapsed Calabi--Yau spaces with
mild singularities. These spaces model parabolic blow-ups at
spatial scales tending to zero in the normalized flow. The uniform
scalar curvature bound forces the limiting flows to be scalar-flat
on their regular parts, and the scalar curvature evolution equation
then implies that these regular parts are Ricci-flat and static.

For general Ricci flows under the scalar Type-I bound
$\mathrm R\le C(T-t)^{-1}$, dilation limits may be singular.
Bamler~\cite{BamlerScalarCompactness} proved that these limits
have a Ricci shrinker structure on their regular parts
and singular sets of codimension at least four.
Hallgren~\cite{HallgrenScalarTypeI} established convergence of
the conjugate heat-kernel entropy to the entropy of the singular
shrinker and used the limiting density to characterize the
singular set of the original flow. Thus the scalar curvature assumption permits
singular shrinking models while retaining an entropy description
of the singularities.

Bamler's $\mathbb F$-convergence theory gives a general framework
for Ricci flow limits without Type-I curvature assumptions
\cite{BamlerEntropy,BamlerCompactness,BamlerStructure}.
Its limiting objects are metric flows, equipped with slice
metrics and conjugate heat kernels. Under entropy noncollapsing,
the structure theory gives a smooth regular Ricci flow spacetime
and quantitative control of the singular set. In particular,
constant pointed Nash entropy yields a shrinking metric soliton:
its regular part satisfies the Ricci shrinker equation,
and its negative-time metric flow is determined by a single
metric measure slice~\cite[Theorem~2.18]{BamlerStructure}.
Potential, volume, and Sobolev estimates for these metric solitons
were developed by Chan--Ma--Zhang~\cite{ChanMaZhangSolitons}.

Fang--Li~\cite{FangLiStructure} introduced a spacetime distance
and studied convergence in the pointed Gromov--Hausdorff sense
for this distance.
Consider smooth closed $n$-dimensional Ricci flows with entropy
bounded below by a common constant $-Y$. A spacetime distance is
defined by comparing conjugate heat kernel measures at earlier
times in Wasserstein distance, together with the parabolic time
separation. For the precise definition, see
\eqref{eq:smooth-spacetime-distance}. A pointed Gromov--Hausdorff
limit of these spacetimes
is a noncollapsed Ricci flow limit space $(Z,d_Z,p_\infty,\Time)$.
At a singular point $z\in Z$ away from the initial time, a tangent
flow $(Z',d_{Z'},z',\Time')$ is a pointed limit
\[
 (Z,r_j^{-1}d_Z,z,r_j^{-2}(\Time-\Time(z)))
 \longrightarrow (Z',d_{Z'},z',\Time'),\qquad r_j\searrow0;
\]
see \cite[Definitions~3.21 and~7.8]{FangLiStructure}.
In particular, $\Time'(z')=0$. The regular part carries a Ricci
flow spacetime structure
\[
 (\Reg',\Time',\partial_{\Time'},g^{Z'}),\qquad
 \Reg'_t=\Reg'\cap Z'_t.
\]
Throughout the paper, we consider such tangent flows and write
$\muo\ge-Y$ for their constant pointed Nash entropy.

Each tangent flow has an associated Bamler shrinking metric
soliton, whose regular negative-time spacetime identifies with
$\Reg'_{(-\infty,0)}$. This soliton describes the geometry before
time zero. The spacetime tangent also retains the time-zero slice
and, when $z$ lies at an interior time of the original limit
space, positive-time slices. These positive-time slices are the
main distinction from the backward tangent flows in Bamler's
theory. We expect the positive-time part to have a self-similar
structure as well; this question remains for future investigation.

Even at negative times, identifying the regular parts does not
immediately identify the ambient spaces. The associated metric
soliton embeds isometrically into $Z'$, but additional
negative-time points outside its image are not excluded a priori.
The resulting question is whether each negative slice of the
tangent flow is the metric completion of its regular Riemannian
part. Fang--Li proved this identification under a scalar
curvature upper bound in \cite[Theorem~7.19]{FangLiStructure}.
Our first result removes this assumption.

We distinguish the spacetime distance $d_{Z'}$ from the slice distance
$d_t^{Z'}$ of \cite[Definition~6.1]{FangLiStructure}. For $t<0$, it is given by
\begin{equation}
 d_t^{Z'}(q_1,q_2)=\lim_{s\nearrow t}
 \Wone^{d_s}(\nu_{q_1;s},\nu_{q_2;s}).
 \label{eq:slice-distance-definition}
\end{equation}
Here $d_s$ is the complete slice metric of the associated metric
soliton, recalled in Section~\ref{subsec:tangent-background}, and
$\Wone^{d_s}$ denotes the Wasserstein distance with cost $d_s$.
The limit exists by Wasserstein monotonicity and is initially
allowed to take the value $+\infty$. On $\Reg'_t$, for $t<0$,
it agrees with the intrinsic Riemannian distance of $g_t^{Z'}$ by
\cite[Corollary~7.10]{FangLiStructure}. The restriction of $d_{Z'}$ to
a time slice need not equal this Riemannian distance.

\begin{theorem}
\label{thm:completion}
For every $t<0$, the metric space $(Z'_t,d_t^{Z'})$ is the metric
completion of $(\Reg'_t,d_{g_t^{Z'}})$. In particular, $d_t^{Z'}$ is
finite-valued, and $(Z'_t,d_t^{Z'})$ is a proper length space.
If $\calX^{z'}$ is the metric soliton associated with $z'$, its canonical
embedding satisfies
\begin{equation}
 \iota_{z'}\bigl(\calX^{z'}_{(-\infty,0)}\bigr)
       =Z'_{(-\infty,0)}.
 \label{eq:surjectivity}
\end{equation}
\end{theorem}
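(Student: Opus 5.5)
The plan is to compare each negative slice $Z'_t$ with the completion $\overline{\Reg'_t}$ through the conjugate heat kernel measures $\nu_{q;s}$, $s<t$. These measures live on the metric soliton slices $(\calX^{z'}_s,d_s)$, which are complete, and they determine the slice distance \eqref{eq:slice-distance-definition}.

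\emph{Step 1: a Lipschitz map.} The map $\Reg'_t\ni x\mapsto\nu_{x;s}$ is $1$-Lipschitz from $d_{g_t^{Z'}}$ into $W_1$ on the complete space $(\mathcal P_1(\calX^{z'}_s),\Wone^{d_s})$. This follows from \cite[Corollary~7.10]{FangLiStructure} together with Wasserstein monotonicity. Hence it extends to the completion $\overline{\Reg'_t}$, and passing to the limit $s\nearrow t$ shows that $d_t^{Z'}$ restricted to $\Reg'_t$ extends isometrically. The same extension can be obtained inside $Z'$: a $d_{g_t}$-Cauchy sequence in $\Reg'_t$ is $d_{Z'}$-Cauchy, because $d_{Z'}$ on a slice is dominated by the $W_1$-distance of the heat kernels at earlier times. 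Since $Z'$ is complete, the sequence converges to a point of $Z'_t$. This gives an isometric embedding $\overline{\Reg'_t}\hookrightarrow(Z'_t,d_t^{Z'})$. It remains to show surjectivity and that $d_t^{Z'}$ is finite.

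\emph{Step 2: surjectivity, the main obstacle.} Fix $q\in Z'_t$. The goal is a sequence $x_i\in\Reg'_t$ with $d_t^{Z'}(x_i,q)\to0$.

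\begin{enumerate}
\item Choose $q_i\in\Reg'$ with $q_i\to q$ in $d_{Z'}$. This is possible because $\Reg'$ is dense, the singular set having codimension four in the Fang--Li theory.
\item The times $\Time'(q_i)$ may differ from $t$. Flow each $q_i$ along $\partial_{\Time'}$ to time $t$, using curvature radius bounds: $r_{\Rm}(q_i)$ controls how far the worldline survives, and the parabolic distance moved is comparable to $|\Time'(q_i)-t|^{1/2}$. This produces $x_i\in\Reg'_t$.
\item For $s<t$ one has $\Wone^{d_s}(\nu_{x_i;s},\nu_{q;s})\to0$. The reason is that $d_{Z'}$-convergence controls $W_1$ at times strictly below, and the kernels vary continuously along worldlines.
\item The difficulty is passing to $s\nearrow t$ uniformly in $i$. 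The first attempt will use the $\mathfrak H_n$-center concentration, $\mathrm{Var}(\nu_{q;s})\le H_n(t-s)$. Together with the triangle inequality through centers, this gives $\Wone^{d_s}(\nu_{x_i;s},\nu_{q;s})\le \Wone^{d_{s'}}(\cdots)+C\sqrt{t-s'}$ for $s'<s$.
\item If this is insufficient, the fallback uses the self-similarity of the soliton to identify $\nu_{q;s}$ with a pushforward under the soliton flow. This would give continuity of $s\mapsto\nu_{q;s}$ in $W_1$ up to $s=t$.
\end{enumerate}

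Consequently $\nu_{q;s}$ is a $W_1$-limit of $\nu_{x;s}$ with $x\in\Reg'_t$, uniformly as $s\nearrow t$, and $q$ lies in the image of $\overline{\Reg'_t}$. Finiteness of $d_t^{Z'}$ follows.

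\emph{Step 3: consequences.} Properness follows from the volume noncollapsing and volume upper bounds on the regular part via Bishop-type estimates for the shrinker; alternatively, it follows because the metric soliton slice $\calX^{z'}_t$ is proper and is the completion of the same Riemannian manifold. The length property holds because the completion of a length space is a length space. For \eqref{eq:surjectivity}, recall that \cite[Theorem~2.18]{BamlerStructure} identifies $\calX^{z'}_t$ with the completion of $(\Reg'_t,g_t)$ in $d_s$-terms. Hence $\iota_{z'}$ restricted to $\calX^{z'}_t$ is the isometric extension from Step 1, and it is onto by Step 2.
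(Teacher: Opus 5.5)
\textbf{There is a genuine gap.} Steps 1 and 3 are essentially fine. Step 2, which you rightly flag as the heart of the matter, does not show that a point $q\in Z'_t$ is a $d_t^{Z'}$-limit of points of $\Reg'_t$, equivalently that $q$ lies in the image of $\iota_{z'}$. Each sub-step fails for a specific reason.
\begin{itemize}
\item In (2), flowing $q_i$ along $\partial_{\Time'}$ is uncontrolled. Since $\rRm(q_i)\to0$ as $q_i\to q$, the worldline may leave $\Reg'$ before reaching time $t$, and its spacetime displacement is not bounded by $C|\Time'(q_i)-t|^{1/2}$.
\item Even if you obtained $x_i\in\Reg'_t$ with $d_{Z'}(x_i,q)\to0$, $d_{Z'}$-convergence within a slice does not a priori give $d_t^{Z'}$-convergence. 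In the paper this only comes afterwards, in Corollary~\ref{cor:proper-embedding}, as a consequence of the theorem.
\item The inequality in (4), $\Wone^{d_s}(\nu_{x_i;s},\nu_{q;s})\le\Wone^{d_{s'}}(\cdots)+C\sqrt{t-s'}$ for $s'<s$, is a reversed Wasserstein monotonicity. Concentration bounds the spread of each measure about its center, but nothing controls the distance between centers at time $s$ by the transport distance at the earlier time $s'$.
\item The fallback (5) is circular. Self-similarity describes $\nu_{q;s}$ only for $q$ already in the soliton, which is exactly what must be proved.
\end{itemize}

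\textbf{Missing idea.} You need an a priori bound that keeps the approximating sequence from escaping to spatial infinity. Convergence in $d_{Z'}$, or the $d^*$-Cauchy property, only controls the measures $\nu_{q_i;s}$ at a fixed earlier time $s$. A priori the points themselves could have $f(q_i)\to\infty$.

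The paper closes this gap as follows.
\begin{enumerate}
\item At a fixed $s<t_0$, the Cauchy property and concentration give uniformly bounded second moments of $\nu_{q_i;s}$ about $p_s$. The quadratic potential bounds then give $\int f\dd\nu_{q_i;s}\le C$.
\item The exact moment identity \eqref{eq:moment-solved} converts this into $\sup_i f(q_i)<\infty$. It comes from the heat representation for the caloric function $\tau(f-n/2)$ of Gaussian-subcritical growth.
\item One then transports $q_i$ by the modified flow $\partial_{\Time'}-\nabla f$, not by $\partial_{\Time'}$. This flow preserves $f$ and regularity.
\item Lemma~\ref{lem:modified-time} bounds the displacement by $d^*(q_i,\widehat q_i)\le C\sqrt{|t_i-t_0|}$. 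It uses Perelman's reduced-length lower kernel bound along a trajectory on which $|\nabla f|^2+\mathrm R=f/\tau$ stays bounded.
\item The transported points lie in a bounded, hence precompact, subset of the proper soliton slice.
\end{enumerate}
This gives $d^*$-completeness of the soliton over compact time intervals. Its image is therefore closed in $Z'$, and since it contains the dense set $\Reg'_{(-\infty,0)}$, it equals $Z'_{(-\infty,0)}$. Only then do the distance-recovery identity and the completion property of soliton slices yield the slice statement and finiteness of $d_t^{Z'}$. Without a substitute for the potential bound, your argument cannot exclude points of $Z'_t$ outside the soliton image.
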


We call the time $-1$ slice $(Z'_{-1},d_{-1}^{Z'})$, equipped
with its regular metric and soliton potential, a \emph{tangent Ricci
shrinker}. The entire negative-time part of the tangent flow is
determined by its tangent Ricci
shrinker through the soliton evolution and the identification in
Theorem~\ref{thm:completion}; see Section~\ref{subsec:soliton}.

The key to Theorem~\ref{thm:completion} is an exact potential
moment identity. Let $f$ be the soliton potential, normalized by
$\tau(\mathrm R+|\nabla f|^2)=f$, where $\tau=-\Time'$.
On the regular spacetime, the function
\[
 u=\tau\left(f-\frac n2\right)
\]
is caloric. We extend the bounded heat representation formula of
\cite[Lemma~5.22]{FangLiStructure} to functions with growth
$\exp(o(f))$ on compact negative-time intervals. It follows that
\begin{equation}
 \int_{\Reg'_s}f\dd\nu_{q;s}
 =\frac{|t|}{|s|}f(q)
   +\left(1-\frac{|t|}{|s|}\right)\frac n2,
 \qquad q\in\Reg'_t,\quad s<t<0.
 \label{eq:intro-moment}
\end{equation}
For a spacetime Cauchy sequence in the associated metric soliton,
Wasserstein monotonicity and concentration bound the second moments
of its conjugate kernels at a fixed earlier time.
Equation~\eqref{eq:intro-moment} then bounds its pointwise potential.
Transport by the modified soliton flow moves the sequence to a
bounded region of a fixed proper slice, while changing the
spacetime distance by a quantity tending to zero. This proves
completeness on compact negative-time intervals and hence surjectivity
of the embedding.
The potential moment identity thus replaces the scalar curvature
bound in \cite[Theorem~7.19]{FangLiStructure}.

As an application, the completion theorem allows a comparison of
the extrinsic spacetime distance from $z'$ with the intrinsic slice
distance. Fix a regular $H_n$-center $p\in\Reg'_{-1}$ of $z'$ and
let $p_{-\tau}$ be its image under the modified soliton flow. Then
\[
 d_{Z'}(z',q)\asymp
 \sqrt\tau+d_{-\tau}^{Z'}(p_{-\tau},q),\qquad q\in Z'_{-\tau},
\]
with comparison constants depending only on $n$.
The precise statement, together with the corresponding potential
and ball comparisons, is Theorem~\ref{thm:comparison}.
Related comparisons between geometric and heat-kernel neighborhoods
on smooth shrinker flows are given by
Li--Wang~\cite[Theorem~1.4(i), Propositions~5.7 and~5.10]{LiWangHeatII}.

We now turn to the regularity of minimizing geodesics. In
Cheeger--Colding theory, Colding--Naber's H\"older continuity of
tangent cones yields regular convexity for noncollapsed Einstein
limits~\cite[Theorem~1.5]{ColdingNaber}.
Chen--Wang~\cite{ChenWangA} developed the corresponding theory
for singular Calabi--Yau spaces. For noncollapsed limits of smooth
Ricci shrinkers, Li--Li--Wang~\cite{LiLiWangStructure} established
the structure theory, including the identification of the ambient
and intrinsic regular-part distances.
Huang--Li--Wang~\cite[Theorem~1.1]{HuangLiWang} then proved regular
convexity, completing the conifold description of these limits.

For the tangent flows considered here, the regular negative-time
geometry is shrinking. We use the term \emph{Ricci shrinker
conifold} for a Riemannian conifold whose regular part carries a
Ricci shrinker structure, following the Riemannian
interpretation of conifolds in the paragraph after Definition~1.2
on page~3 of Chen--Wang~\cite{ChenWangA}. No complex structure
is required, and the dimension is the real dimension. The conifold
definition also requires a uniform volume gap: every tangent cone
at a singular point has unit-ball volume at most
$(1-\delta_n)\omega_n$, where $\delta_n>0$ depends only on $n$
and $\omega_n$ is the Euclidean unit-ball volume. We prove this gap
in Lemma~\ref{lem:entropy-density}.

Theorem~\ref{thm:completion} identifies each negative slice with
the completion of its regular part. The structure results of
Fang--Li~\cite{FangLiStructure} and
Bamler~\cite[Theorem~2.18]{BamlerStructure} supply the Ricci
shrinker geometry, the codimension-four singular-set estimates,
and conical tangent spaces. The remaining property for the
conifold description is regular convexity. Our second main result
establishes it in the ambient slice metric.

\begin{theorem}[Regular convexity]
\label{thm:convexity}
For every $t<0$, any minimizing geodesic in $(Z'_t,d_t^{Z'})$
that meets $\Reg'_t$ has its entire interior in $\Reg'_t$.
In particular, any two points of $\Reg'_t$ are joined by a smooth
minimizing $g_t^{Z'}$-geodesic contained in $\Reg'_t$.
\end{theorem}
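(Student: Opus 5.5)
By Theorem~\ref{thm:completion}, I will reduce the statement to regular convexity of a single metric measure space. The slice $(Z'_t,d_t^{Z'})$ is the metric completion of the smooth incomplete Riemannian manifold $(\Reg'_t,g_t^{Z'})$, and it is a proper length space. Its singular set $\mathcal S_t=Z'_t\setminus\Reg'_t$ has Minkowski codimension at least four, by the Fang--Li/Bamler structure theory. The shrinker equation $\Ric+\nabla^2 f=g/(2\tau)$ holds on $\Reg'_t$, and the potential moment identity \eqref{eq:intro-moment} gives local bounds on $f$ and $|\nabla f|$. So on bounded sets the Bakry--\'Emery Ricci tensor is bounded below. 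The weighted measure $e^{-f}\dV$ is then a $\mathrm{CD}(K,\infty)$-type measure on $\Reg'_t$ with $f$ locally Lipschitz, and after changing $f$ by a bounded amount on balls it is locally comparable to $\dV$. I will also use the volume gap of Lemma~\ref{lem:entropy-density}: every tangent cone at a point of $\mathcal S_t$ has unit-ball volume at most $(1-\delta_n)\omega_n$, while at regular points the tangent cone is $\mathbb R^n$.

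I will follow Colding--Naber~\cite{ColdingNaber}, in the form adapted to shrinker limits by Huang--Li--Wang~\cite{HuangLiWang}. Fix a minimizing geodesic $\gamma:[0,\ell]\to Z'_t$ with $\gamma(a)\in\Reg'_t$ for some $a$, and suppose, aiming for a contradiction, that $\gamma(b)\in\mathcal S_t$ for some interior $b$. The set $\gamma^{-1}(\Reg'_t)$ is relatively open, so I will pass to a maximal interval $(a_0,b_0)$ of regular times with, say, $\gamma(b_0)$ singular and $b_0<\ell$.

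The key step is Hölder continuity of tangent cones along the interior of a limit geodesic. I want a scale-invariant estimate: for $\sigma,\sigma'$ in a compact subinterval of $(0,\ell)$ and small $r$,
\[
 d_{GH}\bigl(B_r(\gamma(\sigma))/r,\,B_r(\gamma(\sigma'))/r\bigr)\le C|\sigma-\sigma'|^{\alpha}.
\]
This comes from Colding--Naber's parabolic approximation argument. It uses the smoothed distance functions $d^\pm$ from the endpoints, their heat-flow approximations with Hessian $L^2$ estimates along $\gamma$, and the resulting almost-isometries between balls along the geodesic. Every estimate in that argument is local, and it needs three inputs: a Laplacian comparison for distance functions, valid here in the Bakry--\'Emery form with the local bounds on $f$; a Bochner formula on the regular part; and integration by parts across $\mathcal S_t$. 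The integration by parts is justified by cutoff functions built from the codimension-four bound, since $\mathcal S_t$ has zero capacity. Granting the estimate, the volume ratio $\omega_n^{-1}\Vol(B_r(\gamma(\sigma)))r^{-n}$ varies continuously in $\sigma$ uniformly in small $r$. It tends to $1$ along the regular points $\gamma(\sigma)$ with $\sigma\nearrow b_0$, but at $\gamma(b_0)$ it is at most $1-\delta_n$ for small $r$, which is a contradiction. Hence the interior of $\gamma$ lies in $\Reg'_t$. If the meeting point is an endpoint, I first apply the same argument on a slightly shorter subsegment.

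For the second statement, I will use that $Z'_t$ is proper and geodesic, so any $x,y\in\Reg'_t$ are joined by a minimizing geodesic. By the first part its interior lies in $\Reg'_t$, and so do its endpoints. It is therefore a locally minimizing curve in the smooth manifold $\Reg'_t$, hence a smooth $g_t^{Z'}$-geodesic, minimizing because $d_t^{Z'}$ restricts to the Riemannian distance.

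The main obstacle is the Hölder continuity step. Colding--Naber work with limits of smooth manifolds with Ricci bounds, whereas here the space is a slice of a flow limit that is not a priori a Gromov--Hausdorff limit of smooth shrinkers with uniform local bounds. My first approach is to run their estimates intrinsically on $Z'_t$, using a cutoff argument on $\mathcal S_t$ in place of approximating manifolds. Failing that, I will approximate $Z'_t$ locally by smooth shrinker balls obtained from the original flows via $\mathbb F$-convergence. I would also try to show $Z'_t$ is an $\mathrm{RCD}(K,n)$ space locally, by the codimension-four and Bakry--\'Emery argument above. Then Deng's Hölder continuity theorem for geodesics in $\mathrm{RCD}$ spaces would apply directly, and combined with the volume gap it would finish the proof.
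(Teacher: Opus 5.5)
Your endgame (Colding--Naber continuity of small balls along the geodesic, combined with the density gap at singular points, in an open--closed argument) is also how the paper finishes. But there is a genuine gap at the input you call ``Laplacian comparison for distance functions, valid here in the Bakry--\'Emery form''. The identity $\Ric+\nabla^2f=g/(2\tau)$ on $\Reg'_t$ does not by itself give you, on the completion, any of: Laplacian comparison for $d(q,\cdot)$, weighted Bishop--Gromov, the segment inequality, or measure control for the radial flows along $\nabla d^\pm$. Each of these holds only if almost every point is joined to $q$ by a minimizing geodesic lying in $\Reg'_t$. That is weak convexity (Theorem~\ref{thm:weak}), and it does not follow from codimension four. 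Zero capacity lets you integrate by parts across $\mathcal S_t$, but it says nothing about whether ambient minimizers avoid $\mathcal S_t$. For example, a flat cone of angle $>2\pi$ has a flat regular part and a zero-capacity vertex, yet a positive-measure set of minimizers from a fixed point passes through the vertex, and $\Delta d_x\le 1/d_x$ fails.

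The paper proves weak convexity by a separate spacetime argument, which it calls the principal new step. It takes minimizing $\mathcal L$-geodesics between $\tau=1$ and $\tau=4$ on the approximating closed flows. The identity $\Act_{1,4}=4f(y)-2f(x)+E_g(\gamma)$ turns them into energy minimizers on $\Reg'_{-1}$. A Chen--Wang last-entry argument then bounds the bad endpoints; there the codimension bound enters through the small area of the inner boundary $\Sigma_\xi$, together with Lott's Jacobian identity and a uniform relative-speed bound. Your fallbacks do not replace this. Slices of the approximating Ricci flows are not shrinkers and have no uniform Ricci lower bound, which is exactly why the paper uses $\mathcal L$-geodesics rather than slice geodesics. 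The synthetic structure available here is Fang--Li's $\mathrm{RCD}(1/2,\infty)$, which is infinite-dimensional, so Deng's theorem does not apply. A finite-dimensional condition on the completion would necessarily be local, since $|\nabla f|$ grows linearly, and proving it would itself require the convexity information you have not supplied.

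There is also a structural problem. You run the H\"older estimate directly along an arbitrary ambient minimizer $\gamma$, which may already pass through $\mathcal S_t$; there $d^\pm$ are not smooth and the Jacobi-field core estimate is unavailable. Even granting weak convexity, the paper proves ball comparison only along regular minimizing geodesics, with constants independent of the regularity scale (Proposition~\ref{prop:small-ball-comparison}). It then proceeds in three steps:
\begin{enumerate}
\item It takes a limit $\sigma$ of regular minimizers from a regular $x$ to dense good endpoints $y_j\to y$.
\item It shows $\{t:\sigma(t)\in\Reg'_{-1}\}$ is open and closed, by passing the comparison to $\sigma$ at fixed radius and using that a point with a Euclidean tangent cone is regular (Lemma~\ref{lem:entropy-density}).
\item It identifies an arbitrary minimizer with such a $\sigma$ beyond a regular interior point, by the no-corner argument and uniqueness for the smooth geodesic equation.
\end{enumerate}
Your maximal-interval argument needs this approximation step, or a genuine finite-dimensional synthetic structure, to reach $\gamma(b_0)$. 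Relatedly, GH-closeness of rescaled balls does not by itself give the continuity of volume ratios you invoke. Conclude instead that tangent cones at $\gamma(b_0)$ have Euclidean unit balls, and apply the entropy--density characterization.
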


Together, these results show that every tangent Ricci shrinker is a
Ricci shrinker conifold in this Riemannian sense.
Theorem~\ref{thm:convexity} thus answers the conifold question raised
by Chen--Wang~\cite[Remark~4.43]{ChenWangB} in the setting of tangent
Ricci shrinkers.

The construction of minimizing geodesics is the principal new step in
Theorem~\ref{thm:convexity}. We first establish a fixed-basepoint
weak-convexity statement. Set $g=g_{-1}^{Z'}$. For every
$x\in\Reg'_{-1}$, almost every $y\in\Reg'_{-1}$ with respect to
$\dV_g$ is joined to $x$ by a unique regular minimizing geodesic
of length $d_g(x,y)=d_{-1}^{Z'}(x,y)$; see
Theorem~\ref{thm:weak} and its quantitative form in
Theorem~\ref{thm:quantitative}. Weak convexity was also discussed
by Bamler~\cite[Remark~2.17]{BamlerStructure} in connection with
mild singularities of static limits.

The weak-convexity argument uses Chen--Wang's control of curvature
along most minimizing $\mathcal L$-geodesics by excluding
high-curvature regions~\cite[Lemma~3.22]{ChenWangB}.
Their last-entry construction
gives a quantitative estimate for the bad endpoints and yields
regular reduced geodesics in the limit in
\cite[Lemma~3.23]{ChenWangB}.
Related heat-kernel estimates and integral estimates along
$\mathcal L$-geodesics enter Bamler's proof of
mildness~\cite[Theorem~1.4]{BamlerScalarCompactness}.

Our construction combines $\mathcal L$-minimization between
two positive backward times with the soliton projection and the
last-entry argument to obtain regular minimizing geodesics.
We keep the backward-time clock anchored at $\Time'=0$ and consider
\[
 \Act_{1,4}(\beta)
   =\int_1^4\sqrt\tau\bigl(\mathrm{R}+|V|^2\bigr)\dd\tau,
 \qquad \beta'=-\partial_{\Time'}+V.
\]
If $\gamma:[0,1]\to\Reg'_{-1}$ is the soliton projection with the
appropriate parameter, then
\begin{equation}
 \Act_{1,4}(\beta)
   =4f(\gamma(1))-2f(\gamma(0))
      +\int_0^1|\dot\gamma|_g^2\dd s.
 \label{eq:intro-action}
\end{equation}
With the endpoints fixed, the soliton projection reduces
$\mathcal L$-minimization to ordinary energy minimization.
Thus a regular minimizing $\mathcal L$-geodesic projects to an
ordinary minimizing geodesic.

The volume estimates for the high-curvature region come from
Fang--Li~\cite[Proposition~8.19 and Theorems~8.18,~8.22]{FangLiStructure}.
Coarea gives a moving inner boundary with small integrated area.
Projecting the last boundary contacts to the endpoint slice,
a pointwise Jacobian comparison and a uniform relative-speed bound
show that the set of bad endpoints has volume tending to zero.
The speed bound follows from an energy estimate for the projected
curves, whose length functionals converge, after subtracting the
endpoint term, to ordinary energy. Passing to the limit on compact
regular domains gives regular minimizing geodesics for almost every endpoint.

The weak-to-strong argument originates in Colding--Naber's
parabolic smoothing of distance functions~\cite{ColdingNaber}.
We use its development on singular spaces by
Chen--Wang~\cite[Proposition~2.52]{ChenWangA} and its weighted
adaptation by Huang--Li--Wang~\cite[Sections~3--4]{HuangLiWang}.
Before this upgrade, weak convexity already recovers the sharp
quadratic bounds for the potential $f$ and the local
$L^2$-Poincar\'e and Sobolev inequalities on the completed slice.
We establish these consequences in Section~\ref{sec:weak-applications},
following the smooth potential argument of
\cite[Lemma~2.1]{HaslhoferMuller}, the analysis on singular spaces
of Chen--Wang~\cite[Section~2]{ChenWangA}, and the local weighted
estimates in \cite[Section~2]{HuangLiWang} and
\cite[Appendix~A]{LiWangCylinders}. These estimates also supply the
analytic inputs for the upgrade. The RCD structure justifies the
heat-energy estimates on the completion. Uniform comparison of small
balls then passes to limit geodesics, where the entropy--volume-density
identity characterizes regular points. Finally, a no-corner argument
and smooth geodesic uniqueness establish the assertion for every
ambient minimizer; see
Subsections~\ref{subsec:entropy-density}--\ref{subsec:regularity-propagation}.

The paper is organized as follows. Section~\ref{sec:preliminaries}
collects the background and cutoff estimates.
Section~\ref{sec:representation} proves the heat equation
representation and the potential moment identity.
Section~\ref{sec:completion} establishes completion and compares
the slice and spacetime distances. Section~\ref{sec:action}
studies spacetime curves, their solitonic projections, and
Jacobian comparison. Section~\ref{sec:weak} proves weak convexity
by the last-entry argument. Finally,
Section~\ref{sec:weak-applications} derives its analytic consequences
and proves regular convexity.

\paragraph{Acknowledgements.}
Yu Li is supported by the National Key Research and Development
Program of China (Grant No.~2025YFA1018200), NSFC-12522105, and
research funds from the University of Science and Technology of
China and the Chinese Academy of Sciences. Bing Wang is supported
by NSFC-12431003 and a research fund from the University of Science
and Technology of China.

\paragraph{AI declaration.}
ChatGPT was used to assist with drafting, mathematical
exposition, revision, reference checking, and preparation of the figure.
The mathematical ideas are the authors' own. Most of the techniques
are drawn from the authors' previous works and those of other
researchers, as cited in the text. The authors take full responsibility
for the mathematical content and correctness.

\section{Preliminaries}
\label{sec:preliminaries}

We recall the definitions and structure results of
\cite{FangLiStructure} used in this paper.

\subsection{Noncollapsed Ricci flow limit spaces and heat kernels}
\label{subsec:kernel}

Fix $T\in(0,\infty]$. For $T<\infty$, set
\[
 \begin{aligned}
 \mathbb I^{++}&=[-T,0],&\qquad
 \mathbb I^+&=[-0.99T,0],\\
 \mathbb I&=[-0.98T,0],&
 \mathbb I^-&=(-0.98T,0].
 \end{aligned}
\]
For $T=\infty$, all four intervals are $(-\infty,0]$.
Let $(M_i^n,g_i(t))$, $t\in\mathbb I^{++}$, be a sequence of
smooth closed Ricci flows, with time functions $\Time_i(x,t)=t$.
Denote their conjugate heat kernel measures by $\nu^i_{x;s}$.

For~$x,y\in M_i\times\mathbb I^+$, put
$t_+=\max\{\Time_i(x),\Time_i(y)\}$ and
$t_-=\min\{\Time_i(x),\Time_i(y)\}$. Following
\cite[Definition~3.2]{FangLiStructure}, define
\begin{equation}
 d_i^*(x,y)=\inf_{\substack{s\in\mathbb I^+\\s\le t_-}}
       \max\left\{\sqrt{t_+-s},
          \Wone^{g_i(s)}(\nu^i_{x;s},\nu^i_{y;s})\right\}.
 \label{eq:smooth-spacetime-distance}
\end{equation}
Thus the comparison times satisfy $-0.99T\le s\le t_-$ when
$T<\infty$, and $s\in(-\infty,t_-]$ when $T=\infty$.
We restrict $d_i^*$ to $M_i\times\mathbb I$.
A pointed Gromov--Hausdorff limit
\[
 (M_i\times\mathbb I,d_i^*,p_i^*,\Time_i)
       \longrightarrow (Z,d_Z,p_\infty,\Time)
\]
is a \emph{Ricci flow limit space}. It is \emph{noncollapsed} if the
approximating flows have entropy bounded below by a common constant
$-Y$; see \cite[Definition~3.21 and Remark~3.22]{FangLiStructure}.
Throughout this paper, we only consider noncollapsed Ricci flow
limit spaces with entropy bounded below by $-Y$.
The limiting spacetime is a proper metric space, and its time
function satisfies
$|\Time(x)-\Time(y)|\le d_Z(x,y)^2$.
We write $Z_t=\Time^{-1}(t)$ and $Z_J=\Time^{-1}(J)$.

The regular set consists of points admitting approximating sequences
with a uniformly positive curvature radius. By
\cite[Definition~5.1 and Theorem~5.2]{FangLiStructure}, it is open
and carries a Ricci flow spacetime structure
\[
 (\Reg,\Time,\partial_{\Time},g^Z).
\]
The regular set is dense away from the initial time, and convergence
on compact regular domains is smooth for the spatial metrics, time
vector fields, and heat kernels. We use spacetime points $x,y,z$
throughout; their time coordinates are $\Time(x),\Time(y),\Time(z)$.

The limit heat kernel is denoted by $K_Z(x;y)$ for
$\Time(y)<\Time(x)$. It is smooth on regular pairs, nonnegative,
and satisfies the forward heat equation in $x$ and the conjugate
heat equation in $y$. For each $x\in Z$ and $s\in\mathbb I^-$ with
$s<\Time(x)$,
\[
 \dd\nu_{x;s}(y)=K_Z(x;y)\dd V_{g_s^Z}(y),\qquad y\in\Reg_s,
 \qquad \nu_{x;\Time(x)}=\delta_x.
\]
These are probability measures with full mass on $\Reg_s$. The
construction at singular basepoints is independent of the associated
metric flow chosen in its definition; see
\cite[Definitions~5.8 and~5.14, Lemmas~5.9 and~5.19]{FangLiStructure}.
They satisfy the reproduction formula
\[
 \nu_{x;s}=\int_{\Reg_u}\nu_{y;s}\dd\nu_{x;u}(y)
       \qquad(s<u<\Time(x)),
\]
and converge weakly to $\delta_x$ as $s\nearrow\Time(x)$.
For a fixed basepoint $x$, the conjugate equation on the regular
spacetime is
\[
 \bigl(-\partial_{\Time,y}-\Delta_y+\mathrm{R}(y)\bigr)K_Z(x;y)=0.
\]
Here the subscript $y$ indicates the variable on which an operator
acts. The reproduction formula is
\cite[Lemma~5.13 and equation~(5.5)]{FangLiStructure}.

The notion of an $H_n$-center and the associated concentration
estimates originate in Bamler~\cite{BamlerEntropy}.
By \cite[Proposition~5.24 and Definition~5.25]{FangLiStructure},
for each $s\in\mathbb I^-$ with $s<\Time(x)$ there is a regular
$H_n$-center $z_s\in\Reg_s$ with
\begin{equation}
 \int_{\Reg_s}d_{g_s^Z}(y,z_s)^2\dd\nu_{x;s}(y)
       \le H_n(\Time(x)-s).
 \label{eq:center}
\end{equation}
For any such center, \cite[Theorem~5.30]{FangLiStructure} gives
\begin{equation}
 K_Z(x;y)\le\frac{C}{(\Time(x)-s)^{n/2}}
       \exp\left(-\frac{d_{g_s^Z}(y,z_s)^2}{5(\Time(x)-s)}\right),
       \qquad y\in\Reg_s,
 \label{eq:gaussian-kernel}
\end{equation}
where $C=C(n,Y)$. Intrinsic distances between different regular
components are assigned the value $+\infty$.
The Wasserstein distance between two conjugate heat flows is
nondecreasing as the common time increases
\cite[Lemma~5.29]{FangLiStructure}. For $t\in\mathbb I^-$, define
\begin{equation}
 d_t^Z(x,y)=\lim_{s\nearrow t}
       \Wone^{g_s^Z}(\nu_{x;s},\nu_{y;s}),\qquad x,y\in Z_t.
 \label{eq:background-slice-distance}
\end{equation}
This is an extended metric, and $d_Z\le d_t^Z$ on $Z_t$; see
\cite[Definition~6.1 and Lemmas~6.3--6.4]{FangLiStructure}.

\subsection{Tangent flows and associated metric solitons}
\label{subsec:tangent-background}

We consider tangent flows based at singular points
$z\in Z_{\mathbb I^-}\setminus\Reg$. Following
\cite[Definition~7.8]{FangLiStructure}, a \emph{tangent flow} at $z$
is a pointed parabolic metric space $(Z',d_{Z'},z',\Time')$ obtained
as a pointed Gromov--Hausdorff limit
\begin{equation}
 (Z,r_j^{-1}d_Z,z,r_j^{-2}(\Time-\Time(z)))
       \longrightarrow (Z',d_{Z'},z',\Time'),\qquad r_j\searrow0.
 \label{eq:tangent-flow-definition}
\end{equation}
In particular, $\Time'(z')=0$, and its time interval is
\[
 I'=\begin{cases}
       (-\infty,0],&\Time(z)=0,\\
       (-\infty,+\infty),&\Time(z)<0.
     \end{cases}
\]
Its regular Ricci flow spacetime is
\[
 (\Reg',\Time',\partial_{\Time'},g^{Z'}),
  \qquad \Reg'_t=\Reg'\cap Z'_t.
\]

The diagonal realization in \cite[equation~(7.3)]{FangLiStructure}
exhibits $Z'$ as a limit of smooth closed flows $(M_i,g_i(t))$ whose
backward time intervals exhaust $(-\infty,0)$. On compact regular
domains there are time-preserving convergence maps $\phi_i$ for
which the pulled-back metrics and time vector fields converge
smoothly, by \cite[Theorem~5.2]{FangLiStructure}. Moreover, the
scalar lower bound \cite[equation~(2.4)]{FangLiStructure} implies
that, on every fixed compact interval $J\Subset(-\infty,0)$,
\begin{equation}
 \mathrm{R}_{g_i(t)}\ge-\epsilon_i(J)\quad(t\in J),\qquad
 \epsilon_i(J)\longrightarrow0.
 \label{eq:scalar-lower}
\end{equation}

The pointed Nash entropy $\mathcal N_{z'}(\tau)$ is constant for
$\tau>0$; write its value as $\muo\ge-Y$. The metric flow
$\calX^{z'}$ associated with $z'$ is a metric soliton, and its regular
negative-time spacetime identifies with $\Reg'_{(-\infty,0)}$;
see \cite[Propositions~7.3 and~7.9]{FangLiStructure}.
Its negative slices are complete, locally compact length spaces
and are the metric completions of their regular parts, as recalled
in \cite[Section~7]{FangLiStructure}; see also
\cite[Theorems~2.18 and~15.69]{BamlerStructure}.

The canonical embedding
\[
 \iota_{z'}:(\calX^{z'},d^*)\longrightarrow(Z',d_{Z'})
\]
is an isometry onto its image and preserves the time functions,
regular spacetime structures, and conjugate heat kernel measures
\cite[Theorem~4.27 and Proposition~5.6]{FangLiStructure}.
We identify the associated soliton with its image when writing
$\Time'$ or $K_{Z'}$ on it. Its image contains
$\Reg'_{(-\infty,0)}$, which is dense in $Z'_{(-\infty,0)}$.

Let $d_t$ denote the complete slice
metric on $\calX^{z'}_t$. On $\Reg'_t$ it agrees with the intrinsic
Riemannian distance and with $d_t^{Z'}$ by
\cite[Corollary~7.10]{FangLiStructure}.
For $q_1,q_2\in\calX^{z'}$, put
$t_+=\max\{\Time'(q_1),\Time'(q_2)\}$ and
$t_-=\min\{\Time'(q_1),\Time'(q_2)\}$.
The same definition as \eqref{eq:smooth-spacetime-distance}, now
with comparison times extending to $-\infty$ and transport measured
by the complete slice metrics $d_s$, gives
\begin{equation}
 d^*(q_1,q_2)=\inf_{s\le t_-}
       \max\left\{\sqrt{t_+-s},
          \Wone^{d_s}(\nu_{q_1;s},\nu_{q_2;s})\right\}.
 \label{eq:dstar}
\end{equation}
In particular,
\begin{equation}
 \begin{split}
 \sqrt{|\Time'(q_1)-\Time'(q_2)|}&\le d^*(q_1,q_2),\\
 d^*(q_1,q_2)&\le d_t(q_1,q_2)
       \quad\bigl(\Time'(q_1)=\Time'(q_2)=t\bigr).
 \end{split}
 \label{eq:elementary-distances}
\end{equation}

\subsection{Soliton coordinates and potential estimates}
\label{subsec:soliton}

Set $\tau=-\Time'$ on the regular negative-time spacetime.
Choose the additive constant of the soliton potential $f$ so that
\begin{equation}
 \Ric+\nabla^2 f=\frac{g^{Z'}}{2\tau},\qquad
 \partial_{\Time'}f=|\nabla f|^2,\qquad
 \tau\bigl(|\nabla f|^2+\mathrm{R}\bigr)=f.
 \label{eq:soliton-identities}
\end{equation}
Then the conjugate kernel based at $z'$ is
\begin{equation}
 K_{Z'}(z';q)=(4\pi\tau(q))^{-n/2}e^{-f(q)-\muo}.
 \label{eq:normalized-soliton-kernel}
\end{equation}
These formulas follow from
\cite[Proposition~7.9 and equation~(7.7)]{FangLiStructure}
by subtracting the constant entropy from the kernel potential.
Here $\mathrm{R}\ge0$.
We have $f>0$ on the regular spacetime. Indeed, the associated
soliton either has $\mathrm R>0$ or is a Ricci-flat metric cone
\cite[Theorem~2.18]{BamlerStructure}. In the cone case,
$f=d_{-1}(o,\cdot)^2/4$ on the reference slice, where $o$ is the
vertex, which is singular in the non-Euclidean setting considered here.
The soliton identities give
\begin{equation}
 |\nabla f|^2+\mathrm{R}=\frac f\tau,\qquad
 \Delta f+\mathrm{R}=\frac n{2\tau},\qquad
 (\partial_{\Time'}-\Delta)f=\frac{f-n/2}{\tau}.
 \label{eq:f-identities}
\end{equation}
On the reference slice $\Reg'_{-1}$ we set $g=g_{-1}^{Z'}$ and
use $f$ for the restriction of the spacetime potential. By
\cite[Proposition~7.9]{FangLiStructure}, each regular negative-time
slice is connected.

The tangent Ricci shrinker is thus the completed slice
\[
 (X,d,g,f)=(Z'_{-1},d_{-1}^{Z'},g_{-1}^{Z'},f|_{Z'_{-1}}),
\]
where $g$ is defined on $\Reg'_{-1}$ and $f$ extends continuously
to $X$ by \eqref{eq:quadratic-potential} and
$|\nabla\sqrt f|\le1/2$. Its normalization is
\begin{equation}
 \mathrm R+|\nabla f|^2=f,\qquad
 \dd\nu=(4\pi)^{-n/2}e^{-f-\muo}\dV_g,\qquad \nu(X)=1.
 \label{eq:tangent-shrinker-normalization}
\end{equation}
The metric completion here is supplied by
Theorem~\ref{thm:completion}.

Let $\boldsymbol{\psi}^s$ be the flow generated by
$\tau(\partial_{\Time'}-\nabla f)$ on $\Reg'_{(-\infty,0)}$,
as in \cite[Section~7, Lemmas~7.28--7.29]{FangLiStructure}.
Its time and metric scaling are
$\Time'\circ\boldsymbol{\psi}^s=e^{-s}\Time'$ and
$(\boldsymbol{\psi}^s)^*g^{Z'}=e^{-s}g^{Z'}$.
Define
\[
 \boldsymbol{\varphi}_\tau=\left.\boldsymbol{\psi}^{-\log\tau}\right|_{\Reg'_{-1}}:
       \Reg'_{-1}\longrightarrow\Reg'_{-\tau},\qquad \tau>0.
\]
These spacetime maps satisfy
\begin{equation}
 \boldsymbol{\varphi}_\tau^*g_{-\tau}^{Z'}=\tau g,\qquad
 f\circ\boldsymbol{\varphi}_\tau=f,\qquad
 \frac{\mathrm d}{\mathrm d\tau}\boldsymbol{\varphi}_\tau
        =-\partial_{\Time'}+\nabla f,
 \qquad \boldsymbol{\varphi}_1=\Id.
 \label{eq:soliton-pullback}
\end{equation}
The soliton tube over a compact subset of $\Reg'_{-1}$
and a compact interval in $(0,\infty)$ is compact in the regular
spacetime. The maps $\boldsymbol{\psi}^s$ extend to $Z'_{(-\infty,0]}$ by
\cite[Proposition~7.30]{FangLiStructure}, with
\begin{equation}
 \begin{split}
 d_{Z'}(\boldsymbol{\psi}^s(x),\boldsymbol{\psi}^s(y))&=e^{-s/2}d_{Z'}(x,y),\\
 d_{e^{-s}t}^{Z'}(\boldsymbol{\psi}^s(x),\boldsymbol{\psi}^s(y))&=e^{-s/2}d_t^{Z'}(x,y)
       \qquad(x,y\in Z'_t).
 \end{split}
 \label{eq:soliton-distance-scaling}
\end{equation}

Under these identifications, the negative slices carry the metrics
$\tau g$ and the same probability measure $\nu$. The soliton
potential determines the time vector field in these coordinates;
the associated metric soliton supplies the conjugate heat kernels
\cite[Theorems~2.18 and~15.69]{BamlerStructure}.
Together with \eqref{eq:dstar} and Theorem~\ref{thm:completion},
this reconstructs the negative-time tangent flow from its tangent
Ricci shrinker.

Choose a regular $H_n$-center $p\in\Reg'_{-1}$ of $z'$ and put
$p_{-\tau}=\boldsymbol{\varphi}_\tau(p)$. By
\cite[Lemma~7.12]{FangLiStructure}, refining the potential estimates
of \cite[Theorem~1.1]{ChanMaZhangSolitons},
\begin{equation}
 C^{-1}\frac{d_{g^{Z'}_{-\tau}}(p_{-\tau},q)^2}{\tau}-C
 \le f(q)
 \le C\left(1+\frac{d_{g^{Z'}_{-\tau}}(p_{-\tau},q)^2}{\tau}\right),
 \label{eq:quadratic-potential}
\end{equation}
where $C=C(n)$. On a compact interval $J\Subset(-\infty,0)$,
\begin{equation}
 \begin{split}
 C_J^{-1}d_{g_t^{Z'}}(p_t,q)^2-C_J
       &\le f(q)\le C_J\bigl(1+d_{g_t^{Z'}}(p_t,q)^2\bigr),\\
 |\nabla f|&\le C_J\sqrt f,\qquad
 0\le\partial_{\Time'}f\le C_J f,
 \end{split}
 \label{eq:slab-potential}
\end{equation}
with $C_J=C(n,J)$. In particular, on $\Reg'_{-1}$,
\begin{equation}
 |\nabla f|(q)\le\sqrt{f(q)}
       \le\sqrt{f(p)}+\frac{1}{2}d_g(p,q).
 \label{eq:linear-drift}
\end{equation}

\subsection{Curvature radius and cutoff estimates}
\label{subsec:cutoffs}

We use the curvature radius of
\cite[Definition~7.4]{FangLiStructure}. For $z\in\Reg'_t$,
$r_{\Rm}(z)$ is the supremum of $r>0$ such that
$B_{g_t^{Z'}}(z,r)$ is relatively compact in $\Reg'_t$ and the
worldline product domain
\[
 B_{g_t^{Z'}}(z,r)\times\bigl([t-r^2,t+r^2]\cap I'\bigr)
\]
is defined in $\Reg'$ with $|\Rm|\le r^{-2}$. Here $I'$ is the
interval specified in Section~\ref{subsec:tangent-background}.
The product uses the worldlines
of $\partial_{\Time'}$. Set $r_{\Rm}=0$ on $Z'\setminus\Reg'$.
The convergence and spacetime Lipschitz properties of this scale
are \cite[Lemma~7.5 and Proposition~7.6]{FangLiStructure}.

The following estimates are collected from
\cite[Proposition~8.19, Theorems~8.18 and~8.22]{FangLiStructure}.

\begin{lemma}[Singular cutoffs and integral bounds]
\label{lem:cutoff-package}
There are smooth functions $\eta_\rho$ on the regular negative-time
spacetime, taking values in $[0,1]$, such that
\begin{equation}
 \begin{gathered}
 \eta_\rho>0\ \Longrightarrow\ \rRm\ge\rho,\qquad
 \eta_\rho=1\text{ on }\{\rRm\ge2\rho\},\\
 \rho|\nabla\eta_\rho|+
 \rho^2\bigl(|\partial_{\Time'}\eta_\rho|+|\nabla^2\eta_\rho|\bigr)
       \le C.
 \end{gathered}
 \label{eq:singular-cutoffs}
\end{equation}
Their supports on bounded regions of compact negative-time slabs
are relatively compact in the regular spacetime. For every
$\theta>0$, every compact interval $J\Subset(-\infty,0)$, and
every bounded spacetime region $Q\subset Z'_J$, we have, for all
sufficiently small $\rho>0$,
\begin{equation}
 \begin{split}
 \int_J\int_{Q_t\cap\{\rRm<2\rho\}}\dd V_{g_t^{Z'}}\dd t
       &\le C_{Q,J,\theta}\rho^{4-\theta},\\
 \sup_{t\in J}\Vol_{g_t^{Z'}}\bigl(Q_t\cap\{\rRm<2\rho\}\bigr)
       &\le C_{Q,J,\theta}\rho^{4-\theta}.
 \end{split}
 \label{eq:parabolic-bad-volume}
\end{equation}
Here $Q_t=Q\cap\Reg'_t$.
On the reference slice, for $A\ge1$,
\begin{equation}
 \Vol_g\bigl(B_g(p,A)\cap\{\rRm<2\rho\}\bigr)
       \le C_{A,\theta}\rho^{4-\theta}.
 \label{eq:slice-bad-volume}
\end{equation}
Finally, for every $J\Subset(-\infty,0)$ there is $C_J<\infty$
such that, for $A\ge2$,
\begin{equation}
 \sup_{t\in J}\Vol_{g_t^{Z'}}\{f\le A^2\}\le C_J A^n.
 \label{eq:sublevel-volume}
\end{equation}
\end{lemma}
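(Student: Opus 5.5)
This lemma is stated as a compilation of estimates from \cite[Proposition~8.19, Theorems~8.18 and~8.22]{FangLiStructure}, so I plan to check that each item follows from those results in the tangent-flow setting, supplemented by the soliton structure of Section~\ref{subsec:soliton}.

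\textbf{Cutoffs.} For the cutoffs $\eta_\rho$, I start from the spacetime Lipschitz property of $\rRm$ \cite[Proposition~7.6]{FangLiStructure}. It gives $|\rRm(x)-\rRm(y)|\le C d$ in a parabolic sense, with time displacement controlled quadratically. I then mollify $\rRm$ at scale comparable to $\rho$ on the region $\{\rRm\ge\rho/2\}$, where the geometry is $C^\infty$-controlled at scale $\rho$ by Shi-type estimates. Composing with a fixed cutoff profile that vanishes below $1.1$ and equals $1$ above $1.9$, evaluated at the mollified $\rRm/\rho$, produces \eqref{eq:singular-cutoffs}. The derivative bounds follow from scaling, since all derivatives of the mollified radius at scale $\rho$ are bounded by $C\rho^{-k}$ times constants depending only on $n$. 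This is the construction of \cite[Proposition~8.19]{FangLiStructure}.

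For relative compactness of the supports on bounded regions of compact slabs, I use two facts. First, $\{\rRm\ge\rho\}\cap Q$ is closed in $Z'$. Second, it lies in $\Reg'$, because $\rRm=0$ off $\Reg'$ and $\rRm$ is continuous. Properness of $Z'$ then finishes this step.

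\textbf{Volume bounds.} The bad-set estimates \eqref{eq:parabolic-bad-volume} and \eqref{eq:slice-bad-volume} are the quantitative Minkowski codimension-four estimates \cite[Theorems~8.18 and~8.22]{FangLiStructure}. These are first stated for parabolic balls. I cover the bounded region $Q$ by finitely many such balls, which is possible by properness, and sum the estimates. Applying the slice estimate on $Z'_{-1}$ to $B_g(p,A)$ requires a containment: by \eqref{eq:quadratic-potential} this ball lies in a $d_{Z'}$-bounded set, since $d^*\le d_t$ by \eqref{eq:elementary-distances}.

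\textbf{Sublevel volumes.} The final bound \eqref{eq:sublevel-volume} uses the soliton scaling. By \eqref{eq:soliton-pullback}, $\Vol_{g_{-\tau}}\{f\le A^2\}=\tau^{n/2}\Vol_g\{f\le A^2\}$, so I only need $\Vol_g\{f\le A^2\}\le CA^n$ on the reference slice. By \eqref{eq:quadratic-potential}, $\{f\le A^2\}\subset B_g(p,CA)$. Integrating the kernel identity \eqref{eq:normalized-soliton-kernel} against the total mass $\nu(X)=1$ does not bound the volume from above, so a different argument is needed. I would instead use the volume upper bound $\Vol B(p,r)\le Cr^n$ from \cite{FangLiStructure}, or the Chan--Ma--Zhang volume growth estimate. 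An alternative is to integrate $\Delta f+\mathrm R=n/2$ over $\{f\le s\}$, with the cutoffs justifying the integration by parts near the singular set. Together with $\mathrm R\ge0$, this gives the differential inequality $\frac{d}{ds}\bigl(s^{-n/2}\Vol\{f\le s\}\bigr)\le0$ for large $s$, as in the smooth Cao--Zhou argument.

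\textbf{Main obstacle.} I expect this last integration by parts to be the main difficulty. The boundary terms near the singular set must be shown to vanish, and this uses the codimension-four estimate with $\rho^{4-\theta}\cdot\rho^{-1}\to0$.
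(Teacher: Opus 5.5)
\textbf{What matches the paper.} The following parts of your outline agree with the paper, which simply cites \cite[Proposition~8.19, Theorem~8.22(ii), Proposition~5.36]{FangLiStructure}:
\begin{itemize}
\item the cutoffs;
\item the time-integrated bound in the first line of \eqref{eq:parabolic-bad-volume};
\item the reference-slice bound \eqref{eq:slice-bad-volume};
\item your first option for \eqref{eq:sublevel-volume}: the slice-ball volume upper bound, the containment $\{f\le A^2\}\subset B_g(p,CA)$, and the exact scaling $\Vol_{g_{-\tau}^{Z'}}\{f\le A^2\}=\tau^{n/2}\Vol_g\{f\le A^2\}$.
\end{itemize}

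\textbf{The gap: the per-slice bound.} The problem is the second line of \eqref{eq:parabolic-bad-volume}, the bound on $\sup_{t\in J}\Vol_{g_t^{Z'}}(Q_t\cap\{\rRm<2\rho\})$. Covering $Q$ by parabolic balls and summing the spacetime Minkowski estimates controls only $\int_J\Vol_{g_t^{Z'}}(Q_t\cap\{\rRm<2\rho\})\dd t$. By Chebyshev, that gives the slice bound for most $t\in J$, but a spacetime volume bound cannot rule out an individual slice carrying a comparatively large bad set. Nothing in your argument supplies this per-slice control. The missing ingredient is the self-similarity of the negative-time tangent flow, which you use only for the sublevel volumes.

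\textbf{How the paper closes it.} The paper fixes $t\in J$, puts $S=Q_t\cap\{\rRm<2\rho\}$, and sweeps $S$ by $\boldsymbol{\psi}^s$ over a fixed short interval $0\le s\le s_0$.
\begin{itemize}
\item The swept set stays in a fixed bounded region of a compact negative-time slab and has $\rRm\le C_J\rho$.
\item Since $\Time'\circ\boldsymbol{\psi}^s=e^{-s}\Time'$ and $(\boldsymbol{\psi}^s)^*g^{Z'}=e^{-s}g^{Z'}$, its slice at time $e^{-s}t$ has volume $e^{-ns/2}\Vol_{g_t^{Z'}}(S)$.
\item These slices fill a time interval of length comparable to $s_0|t|$, so the swept set has spacetime volume at least $c_J\Vol_{g_t^{Z'}}(S)$.
\item The integrated bound, with $2\rho$ replaced by $C_J\rho$, then gives $\Vol_{g_t^{Z'}}(S)\le C\rho^{4-\theta}$ uniformly in $t\in J$.
\end{itemize}
Alternatively, you could transport \eqref{eq:slice-bad-volume} from time $-1$ to time $t$ by $\boldsymbol{\psi}^{\log|t|}$. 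You would need to check that $Q_t$ is carried into a bounded ball about $p$ and that the condition $\rRm<2\rho$ becomes $\rRm<C_J\rho$ there. Either way, the soliton structure must enter.

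This per-slice bound is needed later. In the initial-time limit of Section~\ref{sec:representation}, the omitted singular integral must be small uniformly over a short time interval.

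\textbf{A minor point on the Cao--Zhou alternative.} This route for \eqref{eq:sublevel-volume} is unnecessary, and the monotonicity you state is not what the identity gives. Set $V(s)=\Vol_g\{f\le s\}$ and $\chi(s)=\int_{\{f\le s\}}\mathrm R\dd V_g$. Integrating $\Delta f+\mathrm R=\frac n2$ over $\{f\le s\}$ and using $|\nabla f|^2=s-\mathrm R$ on $\{f=s\}$ yields $\frac{\mathrm d}{\mathrm ds}\bigl(s^{-n/2}V(s)\bigr)=s^{-n/2-1}\bigl(\chi'(s)-\chi(s)\bigr)$. This has no sign, since $\chi'\ge0$. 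Cao--Zhou close the argument only after a further integration by parts in $s$ together with $\chi\le\frac n2V$. So the ``main obstacle'' you single out lies in a step the paper avoids entirely.
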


\begin{proof}
The cutoff properties and the first bound in
\eqref{eq:parabolic-bad-volume} follow from
\cite[Proposition~8.19 and equation~(8.19)]{FangLiStructure}.
For the second bound, sweep
$Q_t\cap\{\rRm<2\rho\}$ through a fixed small interval of soliton
times using $\boldsymbol{\psi}^s$. The swept set lies in a fixed bounded
negative-time region and has curvature radius at most $C_J\rho$.
Moreover, $\Time'\circ\boldsymbol{\psi}^s=e^{-s}\Time'$ and the slice volume
scales by $e^{-ns/2}$, so its spacetime volume is bounded below by
a fixed multiple of the original slice volume, uniformly for
$t\in J$. Applying the parabolic bound to this swept set gives
the exponent $4-\theta$ on every slice.
The reference-slice estimate \eqref{eq:slice-bad-volume} is also
\cite[Theorem~8.22(ii)]{FangLiStructure}.
Finally, \eqref{eq:sublevel-volume} follows from
\cite[Proposition~5.36]{FangLiStructure} and
\eqref{eq:slab-potential}, using soliton scaling to make the
constant uniform on $J$.
\end{proof}

\section{Heat equation representation}
\label{sec:representation}

We extend the bounded heat equation representation of
\cite[Lemma~5.22]{FangLiStructure} and
\cite[Theorem~15.28(d)]{BamlerStructure} to functions of
Gaussian-subcritical growth. The corresponding smooth-shrinker
analysis, including potential cutoffs, maximum principles, and
heat-kernel constructions, is developed in Li--Wang~\cite{LiWangHeat}.

\begin{definition}
\label{def:subcritical}
A function $u$ on $\Reg'_J$, for $J\Subset(-\infty,0)$, has
\emph{Gaussian-subcritical growth} if, for every $\eps>0$, there is
$C_{J,\eps}<\infty$ such that
\begin{equation}
 |u(q)|\le C_{J,\eps}e^{\eps f(q)}\qquad(q\in\Reg'_J).
 \label{eq:subcritical-growth}
\end{equation}
By \eqref{eq:slab-potential}, this is equivalently growth of order
$\exp(o(d_{g_t^{Z'}}(p_t,\cdot)^2))$, uniformly on $J$.
In particular, every polynomial-growth function satisfies this
condition.
\end{definition}

\begin{theorem}[Gaussian-subcritical heat representation]
\label{thm:representation}
Let $a<b<0$ and let
\[
 u\in C^0(\Reg'_{[a,b]})\cap C^\infty(\Reg'_{(a,b]})
\]
satisfy $(\partial_{\Time'}-\Delta)u=0$ and
\eqref{eq:subcritical-growth} on $[a,b]$. For every
$q\in\Reg'_t$, $a<t\le b$,
\begin{equation}
 u(q)=\int_{\Reg'_a}u(y)\dd\nu_{q;a}(y).
 \label{eq:heat-representation}
\end{equation}
\end{theorem}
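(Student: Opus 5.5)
We will prove \eqref{eq:heat-representation} by a cutoff-and-integrate argument against the conjugate kernel. Fix $q\in\Reg'_t$, $a<t\le b$. For $s\in[a,t)$ define
\[
 \Phi(s)=\int_{\Reg'_s}u\,\chi\,\eta_\rho\dd\nu_{q;s},
\]
where $\eta_\rho$ are the singular cutoffs of Lemma~\ref{lem:cutoff-package} and $\chi=\chi_A(f)$ is a potential cutoff, equal to $1$ on $\{f\le A\}$ and supported in $\{f\le 2A\}$, with $|\chi'|\le C/A$. The product $\chi\eta_\rho$ has relatively compact support in the regular spacetime on $[a,t]$. Differentiating in $s$ and using $(\partial_{\Time'}-\Delta)u=0$ together with the conjugate heat equation for $K_{Z'}(q;\cdot)$, we integrate by parts:
\[
 \Phi'(s)=\int\Bigl(u\,\square(\chi\eta_\rho)-2\nabla u\cdot\nabla(\chi\eta_\rho)\Bigr)\dd\nu_{q;s},
\]
with $\square=\partial_{\Time'}-\Delta$. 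We then integrate from $a$ to $s\nearrow t$. As $s\nearrow t$, $\Phi(s)\to u(q)$ by smoothness near the regular point $q$ and the weak convergence $\nu_{q;s}\to\delta_q$; near $q$ both cutoffs equal $1$ for small $\rho$ and large $A$. At $s=a$ we use continuity of $u$ up to $a$. To remove the gradient term, we integrate by parts once more, writing it as $\int u\,(\Delta(\chi\eta_\rho)\ldots)$ plus a term involving $\nabla\log K$. Alternatively, we replace $u$ by a Lipschitz-in-$u$ expression: we apply the identity to $u^2$, whose subsolution $\square u^2=-2|\nabla u|^2$ gives an $L^2$ energy bound for $\nabla u$ localized by $\chi\eta_\rho$ (standard Caccioppoli). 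Cauchy--Schwarz then reduces the gradient error to $\|u\|$ times $\|\nabla(\chi\eta_\rho)\|$ integrals.

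The error terms split into two pieces. The first is the singular part, supported on $\{\rRm<2\rho\}$, where $|\square\eta_\rho|+|\nabla\eta_\rho|^2\le C\rho^{-2}$. On a bounded region $\{f\le 2A\}$ the kernel is bounded by \eqref{eq:gaussian-kernel} for $s\le t-\delta$, and $u$ is bounded there. The parabolic bad-volume estimate \eqref{eq:parabolic-bad-volume} then gives a contribution $\le C_A\rho^{-2}\rho^{4-\theta}\to0$. Near $s=t$ the kernel blows up only at $q$, which stays a fixed distance from the bad set, so it is handled by Gaussian decay. The second piece is the potential-cutoff part, supported on $\{A\le f\le 2A\}$. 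By \eqref{eq:slab-potential}, $|\square\chi|+|\nabla\chi|^2\le C$ there, and $|u|\le C_\eps e^{\eps f}$. The Gaussian bound \eqref{eq:gaussian-kernel} gives $K\le C e^{-c\,d^2}\le Ce^{-c' f}$ on this shell, relative to the $H_n$-center, which sits at bounded potential. The volume is at most $CA^{n/2}$ by \eqref{eq:sublevel-volume}. Choosing $\eps<c'$ makes this piece $O(A^{n/2}e^{-(c'-\eps)A})\to0$. We let $\rho\to0$ first and then $A\to\infty$. Dominated convergence, with the same domination $e^{\eps f}K$, gives $\int u\,\chi\eta_\rho\dd\nu_{q;a}\to\int u\dd\nu_{q;a}$, using that $\nu_{q;a}$ has full mass on $\Reg'_a$.

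We expect the main obstacle to be uniformity of the Gaussian bound across $s\in[a,t)$. Since $t-s$ ranges over $(0,t-a]$, the decay constant $1/(5(t-s))$ only improves as $s\nearrow t$. The centers $z_s$ move, but by \eqref{eq:center} and the potential bounds they stay in a bounded $f$-region uniformly on $[a,t]$. Hence $d_{g_s}(y,z_s)^2\ge c f(y)-C$ on the slab, and $c'$ can be chosen uniform. A second delicate point is the gradient term near the singular set. We plan to handle it via the Caccioppoli estimate for $u$ on $\{\rRm\ge\rho\}$ regions, where $|\nabla u|$ is controlled locally by interior parabolic estimates at scale $\rho$. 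This gives $|\nabla u|\lesssim\rho^{-1}\sup|u|$, and that factor is absorbed by the $\rho^{4-\theta}$ volume bound.
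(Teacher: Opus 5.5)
Your outline follows the paper's skeleton. It uses the cutoffs $\eta_\rho\chi$ and the Green identity for $\int\Theta uK$; your form, with every derivative on $\Theta=\chi\eta_\rho$, is \eqref{eq:green-identity} after one more integration by parts. It also uses the count $\rho^{-2}\cdot\rho^{4-\theta}$ on the transition set of $\eta_\rho$ from \eqref{eq:parabolic-bad-volume}, Gaussian decay $e^{-c'f}$ beating growth $e^{\eps f}$ on the outer shell, and the order $\rho\to0$ before $A\to\infty$. You differ from the paper in how you treat gradients and endpoints.

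\textbf{The paper's route.} It proves the pairing only on $[\alpha,\beta]\Subset(a,t)$. It bounds $|\nabla u|$ and $|\nabla K|$ pointwise by $(1+\rRm^{-1})e^{\pm cf}$ via interior estimates. It dominates the terms where $\eta_\rho$ is undifferentiated by the integrability \eqref{eq:inverse-curvature-integral} of $\rRm^{-1}$. The limits $s\searrow a$ and $s\nearrow t$ are then separate zeroth-order arguments; the terminal one plays growth $e^{Cd_s(y,q_s)^2}$ against \eqref{eq:terminal-gaussian}.

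\textbf{Your Caccioppoli route.} Test $(\partial_{\Time'}-\Delta)u^2=-2|\nabla u|^2$ against $\Theta^2K$, absorb the cross term, and remove the cutoffs by Fatou. This gives $\int_a^t\int|\nabla u|^2K\dd V\dd s<\infty$ from zeroth-order bounds and \eqref{eq:parabolic-bad-volume} alone. Cauchy--Schwarz then disposes of both gradient errors. You can integrate over all of $(a,t)$ and take the endpoint limits with the cutoffs still in place, so the terminal limit is just $\nu_{q;s}\to\delta_q$ tested against a compactly supported continuous function. This works and is somewhat more elementary: it needs neither the gradient envelopes of Lemma~\ref{lem:kernel-decay} nor \eqref{eq:inverse-curvature-integral}.

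Two points need fixing.

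\textbf{Commit to the energy route.} The scale-$\rho$ bound $|\nabla u|\lesssim\rho^{-1}$ in your last paragraph fails in two places. For $s<a+\rho^2$ it degenerates to $(s-a)^{-1/2}$, since $u$ is only continuous at time $a$. On the shell it says nothing about $\eta_\rho\langle\nabla u,\nabla\chi\rangle$, where $|\nabla u|$ is controlled only by $\rRm^{-1}$. Your shell estimate as written treats only $u(\partial_{\Time'}-\Delta)\chi$.

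\textbf{Locate the centers properly.} \eqref{eq:center} and the potential bounds control the spread of $\nu_{q;s}$ about $z_s$, but not where $z_s$ lies. To keep the centers at bounded potential you need Wasserstein monotonicity against $\nu_{z';s}$, as in the paper's proof of \eqref{eq:kernel-potential-decay}. For $s$ near $t$ you also need Fang--Li's Lemma~5.31, $d_s(z_s,q_s)\le C\sqrt{t-s}$. That estimate is also what justifies your claim that $K$ stays bounded on the support of $\nabla\Theta$ as $s\nearrow t$. Integrating up to $t$, both in the main identity and in the energy bound, depends on that claim.
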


\subsection{Kernel decay and first derivative estimates}

Fix $q\in\Reg'_t$ and an interval
$I=[\alpha,\beta]\Subset(a,t)$. In this section we write
$K(y)=K_{Z'}(q;y)$, with $y$ a spacetime point.
For $y\in\Reg'_s$, define the parabolic cylinder
\[
 P(y,r)=\{x_\sigma:x\in B_{g_s^{Z'}}(y,r),\ |\sigma-s|<r^2\},
\]
whenever this domain is defined. Here $x_\sigma$ is the point at
time $\sigma$ on the worldline of $x$ generated by
$\partial_{\Time'}$.

\begin{lemma}
\label{lem:kernel-decay}
For some $c,C>0$,
\begin{equation}
 K(y)\le Ce^{-cf(y)}\qquad(y\in\Reg'_s,\ s\in I).
 \label{eq:kernel-potential-decay}
\end{equation}
Moreover, after decreasing $c$, for every $\eps>0$,
\begin{equation}
 \begin{split}
 |\nabla u|(y)&\le C_\eps(1+\rRm(y)^{-1})e^{\eps f(y)},\\
 |\nabla K|(y)&\le C(1+\rRm(y)^{-1})e^{-cf(y)}.
 \end{split}
 \label{eq:gradient-envelopes}
\end{equation}
\end{lemma}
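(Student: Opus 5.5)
For \eqref{eq:kernel-potential-decay}, I would combine the Gaussian upper bound \eqref{eq:gaussian-kernel} with the quadratic potential bounds \eqref{eq:slab-potential}. For $s\in I$ we have $\Time'(q)-s\ge t-\beta>0$, so the prefactor $(\Time'(q)-s)^{-n/2}$ is bounded. Let $z_s\in\Reg'_s$ be an $H_n$-center of $q$. The centers stay in a bounded region: by \eqref{eq:center} and the reproduction formula, $d_s(z_s,p_s)$ is controlled uniformly for $s\in I$. This should follow from Wasserstein monotonicity comparing $\nu_{q;s}$ with $\nu_{z';s}$, whose center is $p_s$ up to bounded error. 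Then $d_{g_s}(y,z_s)^2\ge \tfrac12 d_{g_s}(y,p_s)^2-C$, and \eqref{eq:slab-potential} with $J=I$ gives $d_{g_s}(y,p_s)^2\ge C_I^{-1}f(y)-C$. Inserting this into \eqref{eq:gaussian-kernel} yields $K\le Ce^{-cf}$ with $c=(10\,C_I\,(t-\alpha))^{-1}$.

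For the gradient bounds I would use local parabolic estimates at the curvature scale. Fix $y\in\Reg'_s$ with $s\in I$ and set $r=\tfrac12\min\{\rRm(y),r_0\}$, where $r_0$ is small enough that $[s-r^2,s+r^2]\subset(a,t)$, which is possible since $I\Subset(a,t)$. On $P(y,r)$ we have $|\Rm|\le r^{-2}$, so after rescaling by $r$ we are on a unit cylinder with bounded geometry. There the standard interior gradient estimate, Bernstein-type or Shi-type for the heat equation, gives $|\nabla u|(y)\le Cr^{-1}\sup_{P(y,r)}|u|$. The same estimate applies to $K$, which satisfies the conjugate equation $-\partial_{\Time'}K-\Delta K+\mathrm RK=0$ with $|\mathrm R|\le Cr^{-2}$, and this suffices.

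It remains to compare suprema over $P(y,r)$ with the value of $f$ at $y$. On $P(y,r)$, \eqref{eq:slab-potential} gives $|\nabla\sqrt f|\le C$ and $0\le\partial_{\Time'}f\le Cf$. Since $r\le r_0$, this yields $\sqrt f\le\sqrt{f(y)}+C$ and hence $f\le 2f(y)+C$ on $P(y,r)$. The same argument gives a lower bound $f\ge \tfrac12 f(y)-C$ there. Consequently $\sup_{P}|u|\le C_{\eps'}e^{\eps' f}\le C_\eps e^{\eps f(y)}$, taking $\eps'=\eps/2$ and using the growth hypothesis \eqref{eq:subcritical-growth} on $[a,b]\supset$ the time range. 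Similarly $\sup_P K\le Ce^{-cf(y)/2}$, so the constant $c$ is halved. Combining with $r^{-1}\le C(1+\rRm(y)^{-1})$ gives \eqref{eq:gradient-envelopes}.

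The main obstacle is the uniform location of the $H_n$-centers relative to $p_s$, which is needed to turn \eqref{eq:gaussian-kernel} into decay in $f$. Its justification is that $\Wone(\nu_{q;s},\nu_{z';s})\le d_{Z'}$-controlled quantities are bounded for fixed $q$ and $s$ in the compact interval $I$, together with the concentration \eqref{eq:center} for both measures. A secondary technical point is checking that the cylinder $P(y,r)$ lies inside the domain where the interior estimates apply. This holds by the definition of $\rRm$ through worldline product domains.
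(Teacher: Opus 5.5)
Your proposal is correct and follows the paper's proof essentially step for step. You localize the $H_n$-centers $z_s$ near $p_s$ via concentration and Wasserstein monotonicity, combine \eqref{eq:gaussian-kernel} with \eqref{eq:slab-potential}, and then apply interior estimates at scale $r\sim\min\{\rRm(y),r_0\}$ together with a two-sided comparison of $f$ on $P(y,r)$.

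Two small points are worth stating explicitly, as the paper does. First, the uniform center bound is cleanest as $\Wone^{d_s}(\nu_{q;s},\nu_{z';s})\le\Wone^{d_\beta}(\nu_{q;\beta},\nu_{z';\beta})<\infty$ rather than via $d_{Z'}$, which only controls comparison times below a near-optimal one. Second, the bound $\sup_{P(y,r)}K\le Ce^{-cf(y)/2}$ needs the decay estimate on a slightly enlarged compact interval $I^+\Subset(a,t)$, and your first step gives this verbatim.
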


\begin{proof}
Let $z_s$ be an $H_n$-center of $q$. The center inequality and
Wasserstein monotonicity give
\begin{align*}
 d_s(z_s,p_s)
 &\le \Wone^{d_s}(\delta_{z_s},\nu_{q;s})
       +\Wone^{d_s}(\nu_{q;s},\nu_{z';s})
       +\Wone^{d_s}(\nu_{z';s},\delta_{p_s})\\
 &\le C_I+\Wone^{d_\beta}(\nu_{q;\beta},\nu_{z';\beta}).
\end{align*}
The last term is finite. The reference centers $p_s$ have uniformly
bounded second moments for $\nu_{z';s}$ on $I$, by soliton scaling.
Thus the centers $z_s$ remain at bounded distance from $p_s$.
Equations~\eqref{eq:gaussian-kernel} and
\eqref{eq:slab-potential} imply
\eqref{eq:kernel-potential-decay}.

For $y\in\Reg'_s$, $s\in I$, set
\[
 r=\frac{1}{2}\min\{1,\rRm(y),\dist(I,\{a,t\})^{1/2}\}.
\]
The cylinders $P(y,3r/2)$ lie in the regular spacetime over a fixed
compact interval $I^+\Subset(a,t)$. Apply the preceding kernel
bound on $I^+$. The curvature-radius bound and Shi's estimates on
these larger cylinders control the rescaled coefficients on
$P(y,r)$. By \eqref{eq:quadratic-potential}--\eqref{eq:slab-potential}
and standard local estimates,
\begin{equation}
 C^{-1}f(y)-C\le f(w)\le Cf(y)+C
       \qquad(w\in P(y,r)),
 \label{eq:local-potential-comparison}
\end{equation}
with uniform constants. Interior estimates for the heat equation give
\[
 |\nabla u|(y)\le Cr^{-1}\sup_{P(y,r)}|u|.
\]
Apply \eqref{eq:subcritical-growth} with a smaller exponent and use
\eqref{eq:local-potential-comparison} to obtain the first bound in
\eqref{eq:gradient-envelopes}. The same comparison gives
$\sup_{P(y,r)}K\le Ce^{-c'f(y)}$ for some $c'>0$.
Interior estimates for the conjugate heat equation, with the time
variable reversed, give the second bound.
\end{proof}

\subsection{The two cutoff limits}

We first record the integrability needed below. For every compact
$J\Subset(-\infty,0)$ and $A\ge2$,
\begin{equation}
 \int_J\int_{\Reg'_s\cap\{f\le A^2\}}
       (1+\rRm^{-1})\dd V_{g_s^{Z'}}\dd s\le C_J A^n.
 \label{eq:inverse-curvature-integral}
\end{equation}
Indeed, \cite[Theorem~8.22(iii)]{FangLiStructure}, with its
parameter $\epsilon=1/2$, bounds the integral of $\rRm^{-3}$
over $B_g(p,L)$ by $CL^{n-1}$ for $L\ge1$.
Combine $\rRm^{-1}\le1+\rRm^{-3}$ with
\eqref{eq:sublevel-volume}, \eqref{eq:slab-potential}, and soliton
scaling to obtain \eqref{eq:inverse-curvature-integral}.

Let $\chi:[0,\infty)\to[0,1]$ be smooth, equal to one on $[0,1]$
and zero on $[2,\infty)$. For $A\ge2$, put
\[
 \chi_A=\chi(f/A^2),\qquad
 \Theta_{\rho,A}=\eta_\rho\chi_A.
\]
The derivatives of $\chi_A$ are supported on
$\mathcal E_A=\{A^2\le f\le2A^2\}$ and satisfy
\begin{equation}
 |\nabla\chi_A|\le C A^{-1},\qquad
 |\partial_{\Time'}\chi_A|\le C.
 \label{eq:outer-cutoff-bounds}
\end{equation}
Set
\[
 I_{\rho,A}(s)=\int_{\Reg'_s}\Theta_{\rho,A}uK\dd V_{g_s^{Z'}}.
\]
Using the cutoffs of Lemma~\ref{lem:cutoff-package} and the heat
and conjugate heat equations, integration by parts gives
\begin{equation}
 \frac{\mathrm d}{\mathrm ds}I_{\rho,A}(s)
 =\int_{\Reg'_s}\left(
 uK\partial_{\Time'}\Theta_{\rho,A}
 -K\langle\nabla\Theta_{\rho,A},\nabla u\rangle
 +u\langle\nabla\Theta_{\rho,A},\nabla K\rangle
 \right)\dd V_{g_s^{Z'}}.
 \label{eq:green-identity}
\end{equation}

First fix $A$ and let $\rho\searrow0$. On the transition set of
$\eta_\rho$ inside $\{f\le2A^2\}$,
\[
 |u|+K\le C_A,\qquad
 |\nabla u|+|\nabla K|\le C_A\rho^{-1}.
\]
Hence the spacetime integral of the terms in
\eqref{eq:green-identity} differentiating $\eta_\rho$ has absolute
value at most
\[
 C_{A,\theta}\rho^{-2}\rho^{4-\theta}
       =C_{A,\theta}\rho^{2-\theta}\longrightarrow0
 \qquad(0<\theta<2).
\]
At the two fixed endpoints $\alpha,\beta$, dominated convergence
applies, since the potential sublevel has finite volume. The
remaining spacetime integrands are dominated on that sublevel by
$C_A(1+\rRm^{-1})$, which is integrable by
\eqref{eq:inverse-curvature-integral}. Thus, if
\[
 I_A(s)=\int_{\Reg'_s}\chi_AuK\dd V_{g_s^{Z'}},
\]
then
\begin{equation}
 \begin{split}
 I_A(\beta)-I_A(\alpha)
 =\int_\alpha^\beta\int_{\Reg'_s}\bigl(
 &uK\partial_{\Time'}\chi_A
 -K\langle\nabla\chi_A,\nabla u\rangle\\
 &+u\langle\nabla\chi_A,\nabla K\rangle
 \bigr)\dd V_{g_s^{Z'}}\dd s.
 \end{split}
 \label{eq:green-after-singular}
\end{equation}

Now choose the growth exponent $\eps$ smaller than the kernel decay
constants in Lemma~\ref{lem:kernel-decay}. On $\mathcal E_A$,
\[
 |u|K+K|\nabla u|+|u||\nabla K|
       \le C(1+\rRm^{-1})e^{-cA^2}.
\]
Using \eqref{eq:outer-cutoff-bounds} and
\eqref{eq:inverse-curvature-integral}, the right side of
\eqref{eq:green-after-singular} is bounded in absolute value by
\[
 C A^n e^{-cA^2}\longrightarrow0
       \qquad\text{as }A\nearrow\infty.
\]
The slice-volume bound and Gaussian decay also give convergence of
the endpoint integrals. Letting $A\nearrow\infty$, we obtain
\begin{equation}
 \int_{\Reg'_\beta}u\dd\nu_{q;\beta}
       =\int_{\Reg'_\alpha}u\dd\nu_{q;\alpha},
       \qquad a<\alpha<\beta<t.
 \label{eq:constant-pairing}
\end{equation}

\subsection{The initial and terminal limits}

At the initial endpoint, insert fixed cutoffs
$\eta_\rho\chi_A$ on a short interval $[a,a+\delta]$.
Continuity of $u$, smoothness of $K$, and compactness of the regular
support imply convergence of the truncated integral as $s\searrow a$.
For fixed $A$, the omitted singular integral is bounded by
$C_{A,\theta}\rho^{4-\theta}$, uniformly in this short interval, by
\eqref{eq:parabolic-bad-volume}. The omitted tail is uniformly bounded
by a Gaussian tail in $f$, since these times stay separated from
$t$. Removing the two cutoffs yields
\[
 \lim_{s\searrow a}\int_{\Reg'_s}u\dd\nu_{q;s}
       =\int_{\Reg'_a}u\dd\nu_{q;a}.
\]

At the terminal endpoint, let $q_s$ be the ordinary worldline of
$q$ in a regular cylinder about $q$, and put $h=t-s$.
For a regular $H_n$-center $z_s$ of $q$,
\cite[Lemma~5.31]{FangLiStructure} gives
$d_s(z_s,q_s)\le C\sqrt h$ for small $h$.
Apply \cite[Theorem~5.30]{FangLiStructure} with $\epsilon=1/2$.
The triangle inequality and Young's inequality give
\[
 d_s(y,q_s)^2\le\frac{10}{9}d_s(y,z_s)^2
                         +10d_s(z_s,q_s)^2,
\]
so the Gaussian estimate becomes
\begin{equation}
 K(y)\le Ch^{-n/2}
       \exp\left(-\frac{d_s(y,q_s)^2}{5h}\right).
 \label{eq:terminal-gaussian}
\end{equation}

Write $D_s(y)=d_s(y,q_s)$. On a short interval ending at $t$,
\eqref{eq:slab-potential} and smoothness along $q_s$ give
$f(y)\le C(1+D_s(y)^2)$. Hence \eqref{eq:subcritical-growth}
implies $|u(y)|\le C\exp(CD_s(y)^2)$. For sufficiently small $h$,
\[
 (1+|u(y)|)K(y)\le Ch^{-n/2}
                    \exp\left(-\frac{D_s(y)^2}{6h}\right).
\]
The potential bound and \eqref{eq:sublevel-volume} also imply
\[
 \Vol_{g_s^{Z'}}B_{d_s}(q_s,L)\le C(1+L)^n\qquad(L>0).
\]
For fixed $r>0$, sum over the annuli
$2^kr\le D_s<2^{k+1}r$, $k\ge0$, to obtain
\[
 \int_{\{D_s\ge r\}}(1+|u|)\dd\nu_{q;s}
       \le C_rh^{-n/2}\exp\left(-\frac{r^2}{12h}\right)
       \longrightarrow0.
\]
Given $\eps>0$, continuity of $u$ in the regular cylinder provides
$r>0$ such that $|u(y)-u(q)|<\eps$ when $D_s(y)<r$ and $s$ is
sufficiently close to $t$. Since $\nu_{q;s}$ is a probability measure,
\[
 \left|\int_{\Reg'_s}u\dd\nu_{q;s}-u(q)\right|
 \le\eps+\int_{\{D_s\ge r\}}\bigl(|u|+|u(q)|\bigr)\dd\nu_{q;s}.
\]
Letting $s\nearrow t$ and then $\eps\searrow0$ proves
\begin{equation}
 \lim_{s\nearrow t}\int_{\Reg'_s}u\dd\nu_{q;s}=u(q).
 \label{eq:terminal-limit}
\end{equation}
The constant pairing and the two endpoint limits prove
Theorem~\ref{thm:representation}.

\begin{corollary}[Potential moment identity]
\label{cor:moment}
For $s<t<0$ and $q\in\Reg'_t$,
\[
 |t|\left(f(q)-\frac n2\right)
 =|s|\int_{\Reg'_s}\left(f(y)-\frac n2\right)\dd\nu_{q;s}(y).
\]
Equivalently,
\begin{equation}
 \int_{\Reg'_s}f(y)\dd\nu_{q;s}(y)
 =\frac{|t|}{|s|}f(q)
   +\left(1-\frac{|t|}{|s|}\right)\frac n2.
 \label{eq:moment-solved}
\end{equation}
\end{corollary}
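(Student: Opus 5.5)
We apply Theorem~\ref{thm:representation} to the function
\[
 u=\tau\Bigl(f-\frac n2\Bigr),\qquad \tau=-\Time',
\]
on the slab $\Reg'_{[s,t]}$, taking $a=s$ and $b=t$. Three hypotheses have to be checked: $u$ is smooth up to the relevant times, it is caloric, and it has Gaussian-subcritical growth.

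Smoothness is immediate, since $f$ is smooth on the regular negative-time spacetime and $[s,t]\Subset(-\infty,0)$.

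For the caloric property we use $\partial_{\Time'}\tau=-1$ together with the last identity in \eqref{eq:f-identities}:
\[
 (\partial_{\Time'}-\Delta)u
 =-\Bigl(f-\frac n2\Bigr)+\tau\,(\partial_{\Time'}-\Delta)f
 =-\Bigl(f-\frac n2\Bigr)+\tau\cdot\frac{f-n/2}{\tau}=0 .
\]

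For the growth condition, $\tau\le|s|$ on the slab and $f>0$, so
\[
 |u|\le|s|\Bigl(f+\frac n2\Bigr)\le C_\eps e^{\eps f}
\]
for every $\eps>0$. Hence \eqref{eq:subcritical-growth} holds.

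Theorem~\ref{thm:representation} then gives, for $q\in\Reg'_t$,
\[
 |t|\Bigl(f(q)-\frac n2\Bigr)=u(q)=\int_{\Reg'_s}u\dd\nu_{q;s}
 =|s|\int_{\Reg'_s}\Bigl(f-\frac n2\Bigr)\dd\nu_{q;s},
\]
where the last equality uses $\tau\equiv|s|$ on $\Reg'_s$. This is the first asserted identity.

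For the solved form \eqref{eq:moment-solved}, note that $\nu_{q;s}$ is a probability measure with full mass on $\Reg'_s$, and $f$ is $\nu_{q;s}$-integrable by the kernel decay \eqref{eq:kernel-potential-decay}. We may therefore split off the constant $n/2$, obtaining $\int f\dd\nu_{q;s}-\tfrac n2=\tfrac{|t|}{|s|}\bigl(f(q)-\tfrac n2\bigr)$, and rearrange.

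The one step needing care is the case $b=t$. Theorem~\ref{thm:representation} requires $q\in\Reg'_{t'}$ with $a<t'\le b$, and $q$ sits exactly at time $t=b$. This is allowed, because the theorem's proof handles the terminal limit through \eqref{eq:terminal-limit}. If we prefer to avoid that boundary case, we can instead take $b$ slightly larger than $t$ but still negative, since $u$ is smooth and caloric on all of $\Reg'_{(-\infty,0)}$. Either way, I expect no real obstacle: the analytic work is contained in Theorem~\ref{thm:representation}.
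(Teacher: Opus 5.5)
Your proposal is correct and matches the paper's proof: the paper also checks that $\tau(f-\frac n2)$ is caloric using \eqref{eq:f-identities} and $\partial_{\Time'}\tau=-1$, then applies Theorem~\ref{thm:representation} and rearranges. The only difference is cosmetic: the paper verifies \eqref{eq:subcritical-growth} through quadratic growth in the slice distance, whereas you bound $|u|\le|s|(f+\frac n2)$ directly in terms of $f$, which is equally valid. For the split, note that integrability of $f$ against $\nu_{q;s}$ follows from $f\ge0$ and the finiteness of $\int u\dd\nu_{q;s}$; citing \eqref{eq:kernel-potential-decay} is not needed, and that estimate is stated only for times strictly inside $(a,t)$.
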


\begin{proof}
By \eqref{eq:f-identities} and $\partial_{\Time'}\tau=-1$,
\[
 (\partial_{\Time'}-\Delta)\left[\tau\left(f-\frac n2\right)\right]
 =-\left(f-\frac n2\right)+\tau\frac{f-n/2}{\tau}=0.
\]
This function has at most quadratic growth in the slice distance on
every compact negative-time interval. Apply
Theorem~\ref{thm:representation} and rearrange.
\end{proof}

\section{Completion and distance comparison}
\label{sec:completion}

\subsection{Proof of the completion theorem}

\begin{lemma}
\label{lem:modified-time}
Let $J\Subset(-\infty,0)$ and $A<\infty$. Suppose $q_t$ is a
trajectory of $\partial_{\Time'}-\nabla f$ with $f(q_t)\le A$.
For $s,t\in J$ sufficiently close,
\begin{equation}
 d^*(q_s,q_t)\le C(n,Y,J,A)\sqrt{|t-s|}.
 \label{eq:modified-time-distance}
\end{equation}
\end{lemma}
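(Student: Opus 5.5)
We estimate $d^*(q_s,q_t)$ directly from \eqref{eq:dstar}. Assume $s<t$ and use a comparison time $\sigma=s-h$ with $h=t-s$. This gives
\[
 d^*(q_s,q_t)\le\max\Bigl\{\sqrt{t-\sigma},\,\Wone^{d_\sigma}(\nu_{q_s;\sigma},\nu_{q_t;\sigma})\Bigr\}.
\]
The first entry equals $\sqrt{2h}$. It therefore suffices to show that
\[
 \Wone^{d_\sigma}(\nu_{q_s;\sigma},\nu_{q_t;\sigma})\le C\sqrt h.
\]

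We bound this Wasserstein distance by passing through point masses. By the triangle inequality it is at most
\[
 \Wone^{d_\sigma}(\nu_{q_s;\sigma},\delta_{w})
 +d_\sigma(w,w')
 +\Wone^{d_\sigma}(\delta_{w'},\nu_{q_t;\sigma}).
\]
Here $w$ and $w'$ are regular $H_n$-centers of $q_s$ and $q_t$ at time $\sigma$. By \eqref{eq:center} and Cauchy--Schwarz, the first and third terms are at most $\sqrt{H_n(t-\sigma)}\le C\sqrt h$.

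It remains to show $d_\sigma(w,w')\le C\sqrt h$, and this is the main step. We transport both centers to the worldline of the curve $q_\cdot$. By \cite[Lemma~5.31]{FangLiStructure}, an $H_n$-center at a time $h'$ before a regular point lies within $C\sqrt{h'}$ of that point's $\partial_{\Time'}$-worldline. The constant is uniform once the curvature radius of the basepoint is bounded below. This explains the role of the hypothesis $f(q_t)\le A$. By \eqref{eq:slab-potential}, the points $q_t$, $t\in J$, stay in a bounded set. The potential is preserved along $\partial_{\Time'}-\nabla f$ up to the factor from $\partial_{\Time'} f-|\nabla f|^2=0$. By soliton scaling, such trajectories are images of a fixed compact piece under $\boldsymbol{\psi}$. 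Hence the trajectory lies in a compact subset of the regular spacetime over $J$, and $r_{\Rm}\ge r_0(n,Y,J,A)>0$ there.

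With this curvature radius bound, write $(q_s)_\sigma$ and $(q_t)_\sigma$ for the $\partial_{\Time'}$-worldline points of $q_s$ and $q_t$ at time $\sigma$. They satisfy
\[
 d_\sigma(w,(q_s)_\sigma)\le C\sqrt h,\qquad
 d_\sigma(w',(q_t)_\sigma)\le C\sqrt{2h}.
\]
The trajectory velocity is $\partial_{\Time'}-\nabla f$, and $|\nabla f|\le C_J\sqrt A$ by \eqref{eq:slab-potential}. Hence, measured against the $\partial_{\Time'}$-worldlines, $q_t$ differs from the worldline point $(q_s)_t$ by $g_t$-distance at most $C\sqrt A\,h$. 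The metrics are uniformly equivalent on the regular cylinder of size $r_0$ when $h\le r_0^2$. Flowing back to time $\sigma$ then gives $d_\sigma((q_s)_\sigma,(q_t)_\sigma)\le Ch\le C\sqrt h$. This is where the restriction to $s,t$ sufficiently close enters.

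Combining the three pieces gives \eqref{eq:modified-time-distance}. An alternative for the middle step would be to bound $\Wone^{d_\sigma}$ by comparing the kernels $K(q_s;\cdot)$ and $K(q_t;\cdot)$ via gradient estimates in the variable $q$. The center argument seems cleaner.
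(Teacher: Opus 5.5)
Your decomposition through $H_n$-centers is sound, but there is a genuine gap in its key step. You assert that trajectories with $f\le A$ over $J$ satisfy $\rRm\ge r_0(n,Y,J,A)>0$, and this is false. The sublevel set $\{f\le A\}$ is bounded in the completed slice, but it is not compact in $\Reg'_t$. It contains regular points arbitrarily close to singular points, where $\rRm$ (which is at most the distance to the singular set) is arbitrarily small. In the Ricci-flat cone case, for instance, $f=d_{-1}(o,\cdot)^2/4$ with $o$ the singular vertex. The trajectory through a regular point at distance $\epsilon$ from $o$ keeps $f=\epsilon^2/4$ and has curvature radius $O(\epsilon)$ throughout $J$.

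For a single trajectory the soliton tube is compact, so some positive lower bound exists, but it depends on the trajectory. Hence the range $h\lesssim r_0^2$ on which you invoke \cite[Lemma~5.31]{FangLiStructure} and compare worldlines depends on the regularity scale of the trajectory, not only on $(n,Y,J,A)$. By your own remark, so does the constant. That uniformity is the whole point of the lemma. In the proof of Theorem~\ref{thm:completion} it is applied to regular approximants $q_i$ of a $d^*$-Cauchy sequence, whose curvature radii may tend to zero. One must conclude $d^*(q_i,\widehat q_i)\to0$ from $t_i\to t_0$ and $\sup_if(q_i)<\infty$ alone.

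The missing idea is a heat-kernel lower bound along the trajectory itself, which uses only integrated curvature along the curve and no regularity scale. Set $h=t-s$ and run the trajectory backward from $q_t$ to $q_s$. Its velocity is $-\partial_{\Time'}+\nabla f$, so by \eqref{eq:soliton-identities} its $\mathcal L$-length is $\int_0^h\sqrt u\,(\mathrm R+|\nabla f|^2)\dd u=\int_0^h\sqrt u\,(f/\tau)\dd u\le C(J,A)h^{3/2}$. Here $\tau=-\Time'$ is bounded below on $J$.

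Perelman's lower bound $K\ge(4\pi h)^{-n/2}e^{-\ell}$ \cite[Corollary~9.4]{Perelman2002} applies on the approximating flows and passes to the limit on this compact regular curve. It gives $K_{Z'}(q_t;q_s)\ge(4\pi h)^{-n/2}e^{-Ch}$. Compare this with the Gaussian upper bound \eqref{eq:gaussian-kernel}, whose constant depends only on $(n,Y)$. This forces $d_s(q_s,z_s)\le C\sqrt h$ for an $H_n$-center $z_s$ of $q_t$ at time $s$.

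Now take the comparison time $s$ itself in \eqref{eq:dstar}, so that $\nu_{q_s;s}=\delta_{q_s}$. Together with \eqref{eq:center} this yields $d^*(q_s,q_t)\le\max\{\sqrt h,\,d_s(q_s,z_s)+\sqrt{H_nh}\}\le C\sqrt h$, with $C=C(n,Y,J,A)$. This is the paper's argument. Your middle step $d_\sigma(w,w')\le C\sqrt h$ has to be obtained this way, or by some other scale-free estimate, rather than from local worldline geometry.
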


\begin{proof}
Throughout the proof, $C$ depends only on $n,Y,J,A$ and may change
from line to line.
Assume $s<t$. Along the trajectory,
$f$ is constant and
$|\nabla f|^2+\mathrm{R}=f/\tau\le C$. The lower heat-kernel estimate
obtained from the reduced length of this regular curve gives
\begin{equation}
 K_{Z'}(q_t;q_s)\ge(4\pi(t-s))^{-n/2}e^{-C(t-s)}.
 \label{eq:curve-kernel-lower}
\end{equation}
This is the estimate used in
\cite[equation~(7.15)]{FangLiStructure}, obtained by applying
\cite[Corollary~9.4]{Perelman2002} on the closed approximating Ricci flows and
passing to the limit on the compact regular trajectory.
For an $H_n$-center $z_s$ of $q_t$, compare
\eqref{eq:curve-kernel-lower} with
\eqref{eq:gaussian-kernel}. This yields
$d_s(q_s,z_s)\le C\sqrt{t-s}$.
Equation~\eqref{eq:center} then implies
\[
 \Wone^{d_s}(\delta_{q_s},\nu_{q_t;s})
 \le d_s(q_s,z_s)+\sqrt{H_n(t-s)}\le C\sqrt{t-s}.
\]
Choosing the comparison time $s$ in \eqref{eq:dstar} proves the
claim.
\end{proof}

\begin{proof}[Proof of Theorem~\ref{thm:completion}]
Let $J\Subset(-\infty,0)$ be a compact interval. We prove that the
associated metric soliton restricted to $J$ is complete for $d^*$.
Let $(q_i)$ be a $d^*$-Cauchy sequence in that restriction.
By density we may replace it by a regular sequence at $d^*$-distance
tending to zero. Write $t_i=\Time'(q_i)\to t_0\in J$ and fix
$s<t_0$. For large $i$ all $t_i>s$.

The definition of $d^*$ and Wasserstein monotonicity imply
\[
 \Wone^{d_s}(\nu_{q_i;s},\nu_{q_j;s})\longrightarrow0.
\]
If $c_i$ is an $H_n$-center of $q_i$ at time $s$, then
\begin{align*}
 d_s(c_i,c_j)
 &\le\sqrt{H_n(t_i-s)}
   +\Wone^{d_s}(\nu_{q_i;s},\nu_{q_j;s})
   +\sqrt{H_n(t_j-s)}.
\end{align*}
Thus the centers remain in a bounded slice ball. The concentration
estimate gives the stronger conclusion
\[
 \int d_s(p_s,y)^2\dd\nu_{q_i;s}(y)
 \le2d_s(p_s,c_i)^2+2H_n(t_i-s)\le C.
\]

By \eqref{eq:quadratic-potential},
$\int f\dd\nu_{q_i;s}\le C$. Equation~\eqref{eq:moment-solved}
therefore yields
\begin{equation}
 \sup_i f(q_i)<\infty.
 \label{eq:cauchy-potential-bound}
\end{equation}
Transport $q_i$ by $\partial_{\Time'}-\nabla f$ to a point
$\widehat q_i$ at time $t_0$. The potential is unchanged, so the
lower bound in \eqref{eq:quadratic-potential} places
$\widehat q_i$ in a bounded ball of the associated metric soliton
slice. By the associated-soliton structure recalled in
Section~\ref{subsec:tangent-background}, this slice is a complete,
locally compact length space and hence proper. After passing to a subsequence,
\[
 \widehat q_i\longrightarrow q_\infty\quad\text{in }d_{t_0}.
\]
Lemma~\ref{lem:modified-time} and
\eqref{eq:cauchy-potential-bound} give
$d^*(q_i,\widehat q_i)\to0$, while
\eqref{eq:elementary-distances} gives
$d^*(\widehat q_i,q_\infty)\to0$. The original Cauchy sequence
thus converges in the associated metric soliton.

For a point of $Z'_{(-\infty,0)}$, approximate it by regular
spacetime points in a compact interval contained in $(-\infty,0)$.
The preceding completeness argument shows that its limit belongs
to the image of $\iota_{z'}$. This proves
\eqref{eq:surjectivity}. Since the associated metric soliton is
continuous, the distance-recovery identity used in the proof of
\cite[Lemma~6.5]{FangLiStructure} identifies its slice metrics with
the Fang--Li slice metrics in \eqref{eq:slice-distance-definition}.
The intrinsic
completion property of the associated soliton therefore identifies
the whole of
$(Z'_t,d_t^{Z'})$ with the completion of
$(\Reg'_t,d_{g_t^{Z'}})$.
\end{proof}

\begin{corollary}[Four-dimensional tangent Ricci shrinkers]
\label{cor:four-dimensional-orbifold}
Every four-dimensional tangent Ricci shrinker is a complete smooth
Riemannian orbifold with isolated singularities. The same holds for
every negative time slice of its tangent flow.
\end{corollary}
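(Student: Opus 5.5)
The argument combines Theorem~\ref{thm:completion} with the four-dimensional structure theory for the associated metric soliton. By Theorem~\ref{thm:completion}, the slice $(Z'_{-1},d_{-1}^{Z'})$ is the metric completion of $(\Reg'_{-1},g)$ and coincides with the $-1$ slice of the associated metric soliton $\calX^{z'}$. It therefore suffices to show that, for $n=4$, this completed slice is a smooth orbifold with isolated singular points. The statement for other negative slices then follows from the soliton scaling \eqref{eq:soliton-distance-scaling}, since $\boldsymbol\psi^s$ maps $Z'_{-1}$ onto $Z'_{-\tau}$ homothetically. Completeness comes directly from the completion statement.

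For the local structure I would invoke Bamler's structure theory for four-dimensional noncollapsed limits, \cite[Theorem~2.18 and Theorem~2.46]{BamlerStructure} (the $n=4$ statement). In dimension four, the singular set of a noncollapsed metric-flow limit slice has Minkowski codimension four and is therefore discrete, and each singular point has a unique tangent cone of the form $\mathbb R^4/\Gamma$ with $\Gamma\subset O(4)$ finite and acting freely on $S^3$. Concretely, I would verify the following.

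\textbf{Discreteness.} Using \eqref{eq:slice-bad-volume} with $n=4$ and $\theta$ small, the set $\{\rRm<2\rho\}\cap B_g(p,A)$ has volume at most $C\rho^{4-\theta}$. Combined with the noncollapsed lower volume bound for $\rho$-balls, this gives a bound of the form $C\rho^{-\theta}$ on the number of $\rho$-balls needed to cover it. The stronger Bamler statement, that the singular set is locally finite in dimension four, upgrades this to genuine discreteness.

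\textbf{Orbifold charts.} Near an isolated singular point $x$, the regular part of a punctured ball $B(x,r)\setminus\{x\}$ carries the shrinker metric with $\Ric=\tfrac12g-\nabla^2f$, where $f$ is smooth and bounded there. The metric is therefore close to the cone $\mathbb R^4/\Gamma$ at all small scales. Lifting to the universal cover of the punctured ball, one obtains a smooth metric on a punctured Euclidean ball satisfying an elliptic equation: in harmonic coordinates the shrinker equation becomes an elliptic system for $(g,f)$. It has bounded curvature in $L^2$, since $\int|\Rm|^2$ is finite by the codimension-four estimate $\int\rRm^{-4+\theta}<\infty$. The removable-singularity theorem for critical metrics (Uhlenbeck / Tian / Anderson type, as used for four-dimensional shrinkers) then extends the metric smoothly across the origin in the $\Gamma$-cover.

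The main obstacle is this last removable-singularity step: one needs $\|\Rm\|_{L^2}$ small on annuli, or equivalently an $\epsilon$-regularity in the $L^2$ curvature norm, for the regular part of the limit. I would first try to get this from the $\rRm^{-4+\theta}$ integrability together with the uniqueness of the tangent cone, which forces curvature concentration to vanish on dyadic annuli. I would then apply the standard $\epsilon$-regularity for the shrinker system on those annuli, in the form known for four-dimensional Ricci shrinkers in Haslhofer--M\"uller, to get $|\Rm|\le Cr^{-2}o(1)$ there. Finally, I would run the Cheeger--Tian / Bando--Kasue--Nakajima argument to conclude that the cover is smooth at the vertex.
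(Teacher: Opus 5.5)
Your first paragraph, together with the appeal to \cite[Theorem~2.46]{BamlerStructure}, is exactly the paper's proof. Theorem~\ref{thm:completion} identifies each negative slice of $Z'$ isometrically with the corresponding slice of the associated metric soliton. Bamler's four-dimensional theorem then gives the smooth orbifold structure with isolated singularities for those soliton slices. The paper applies the identification slice by slice rather than passing from $t=-1$ via the homotheties $\boldsymbol\psi^s$, but either works. Nothing further is needed, so the removable-singularity step you call the main obstacle is not an obstacle for this corollary.

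The independent ``verification'' you sketch would not stand on its own, and should not be presented as part of the argument. Two of its inferences fail.

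First, the bound $\Vol_g(\{\rRm<2\rho\}\cap B_g(p,A))\le C\rho^{4-\theta}$ gives at best Minkowski dimension zero for the singular set. Minkowski dimension zero does not imply discreteness: $\{0\}\cup\{2^{-k}\}$ has box dimension zero. So discreteness genuinely has to come from Bamler.

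Second, $\int\rRm^{-4+\theta}<\infty$ does not control $\int|\Rm|^2$. You only have $|\Rm|^2\le\rRm^{-4}$, and $\rRm^{-4}\ge\rRm^{-4+\theta}$ wherever $\rRm\le1$, so the comparison points the wrong way. In fact $\int\rRm^{-4}$ diverges even near a genuine orbifold point $x$, since $\rRm\le d(x,\cdot)$ there. A Haslhofer--M\"uller type $\epsilon$-regularity needs small local $L^2$ curvature, which would require a separate argument, for instance a Chern--Gauss--Bonnet bound justified on the singular space. A more natural input for a Bando--Kasue--Nakajima argument would be smooth convergence of rescaled annuli to the flat cone $\mathbb R^4/\Gamma$, which yields $|\Rm|=o(d(x,\cdot)^{-2})$ directly.
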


\begin{proof}
By Theorem~\ref{thm:completion}, the slice identifies isometrically
with the corresponding slice of the associated metric soliton, so
the conclusion follows from Bamler~\cite[Theorem~2.46]{BamlerStructure}.
\end{proof}

\subsection{Radial distance comparison}
\label{subsec:radial-comparison}

We identify the associated soliton with $Z'_{(-\infty,0)}$ and
write $d_t=d_t^{Z'}$. Fix a regular $H_n$-center
$p\in\Reg'_{-1}$ of $z'$ and set
$p_{-\tau}=\boldsymbol{\varphi}_\tau(p)$ for $\tau>0$. Define
\[
 D(q)=d_{Z'}(z',q),\qquad q\in Z'_{(-\infty,0)}.
\]
The reference centers satisfy
\begin{equation}
 \int d_{-a}(p_{-a},y)^2\dd\nu_{z';-a}(y)\le C(n)a
 \qquad(a>0).
 \label{eq:base-second-moment}
\end{equation}
This follows by scaling the center inequality at time $-1$.

\begin{theorem}[Distance comparison]
\label{thm:comparison}
There is $C=C(n)>1$ such that, for every $\tau>0$ and
$q\in Z'_{-\tau}$,
\begin{equation}
 C^{-1}\bigl(\sqrt\tau+d_{-\tau}^{Z'}(p_{-\tau},q)\bigr)
 \le d_{Z'}(z',q)
 \le C\bigl(\sqrt\tau+d_{-\tau}^{Z'}(p_{-\tau},q)\bigr).
 \label{eq:radial-comparison}
\end{equation}
The continuous extension of $f$ to $Z'_{-\tau}$ satisfies
\begin{equation}
 C^{-1}\frac{d_{Z'}(z',q)^2}{\tau}
 \le 1+f(q)\le C\frac{d_{Z'}(z',q)^2}{\tau}.
 \label{eq:potential-spacetime}
\end{equation}
In particular, for $r\ge1$,
\begin{equation}
 \begin{split}
 B_{d_{Z'}}(z',r)\cap Z'_{-1}
       &\subset B_{d_{-1}^{Z'}}(p,Cr),\\
 B_{d_{-1}^{Z'}}(p,r)
       &\subset B_{d_{Z'}}(z',Cr).
 \end{split}
 \label{eq:ball-comparison}
\end{equation}
\end{theorem}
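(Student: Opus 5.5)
We reduce everything to $\tau=1$ by soliton scaling. By \eqref{eq:soliton-distance-scaling}, the map $\boldsymbol{\psi}^{-\log\tau}$ fixes $z'$ (it acts as a parabolic dilation on $Z'_{(-\infty,0]}$ and fixes the basepoint). It multiplies $d_{Z'}$ and the slice distances by $\sqrt\tau$, sends $p$ to $p_{-\tau}$, and preserves $f$. Hence both \eqref{eq:radial-comparison} and \eqref{eq:potential-spacetime} are scale invariant, and it suffices to treat $q\in Z'_{-1}$. We write $d=d_{-1}^{Z'}$ and $D=d_{Z'}(z',\cdot)$. By Theorem~\ref{thm:completion}, $Z'_{-1}$ is the completion of $\Reg'_{-1}$, and both sides are continuous in $q$: $D$ is $1$-Lipschitz for $d$ because $d_{Z'}\le d_{-1}^{Z'}$. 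So we may assume $q$ regular and extend $f$ by continuity using $|\nabla\sqrt f|\le 1/2$.

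\emph{Upper bound.} By the triangle inequality, $D(q)\le D(p)+d_{Z'}(p,q)\le D(p)+d(p,q)$. It remains to show $D(p)\le C(n)$. In \eqref{eq:dstar}, with $z'$ at time $0$ and $p$ at time $-1$, choose the comparison time $s=-2$. Then $\sqrt{t_+-s}=\sqrt2$. Since $\nu_{z';-2}=\int\nu_{y;-2}\,d\nu_{z';-1}(y)$ by reproduction, the Wasserstein distance between $\nu_{z';-2}$ and $\nu_{p;-2}$ is bounded by
\[
\int \Wone^{d_{-2}}(\nu_{y;-2},\nu_{p;-2})\,d\nu_{z';-1}(y)\le \int d(y,p)\,d\nu_{z';-1}(y)\le C(n),
\]
using Wasserstein monotonicity and \eqref{eq:base-second-moment}. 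Finally, $\sqrt\tau\le D$ holds trivially from \eqref{eq:elementary-distances}.

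\emph{Lower bound.} This is the step I expect to be the real content. We must show $d(p,q)\le C(1+D(q))$, i.e.\ that the extrinsic distance cannot be much shorter than the slice distance. Let $s\le-1$ be a near-optimal comparison time in \eqref{eq:dstar} for the pair $(z',q)$. Then $\sqrt{-s}\le 2D$ and $\Wone^{d_s}(\nu_{z';s},\nu_{q;s})\le 2D$. Let $c$ be an $H_n$-center of $q$ at time $s$. We get
\[
d_s(p_s,c)\le \sqrt{C|s|}+2D+\sqrt{H_n(|s|-1)}\le C D,
\]
and therefore $\int d_s(p_s,y)^2\,d\nu_{q;s}\le CD^2$. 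By \eqref{eq:quadratic-potential} at time $s$, $\int f\,d\nu_{q;s}\le C(1+D^2/|s|)$. Corollary~\ref{cor:moment} with $|t|=1$ then gives
\[
f(q)\le |s|\int f\,d\nu_{q;s}+\tfrac n2\le C(|s|+D^2)\le CD^2.
\]
The lower half of \eqref{eq:quadratic-potential} at time $-1$ then yields $d(p,q)^2\le C(1+f(q))\le C(1+D^2)$. Since $D\ge1$ on $Z'_{-1}$, this gives $d(p,q)\le CD$.

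\emph{Potential bounds and balls.} The same chain proves the upper bound in \eqref{eq:potential-spacetime}: $1+f\le CD^2$. For the reverse inequality, use the upper half of \eqref{eq:quadratic-potential} together with the already proven upper bound $D\le C(1+d(p,q))$. This gives $D^2\le C(1+d^2)\le C(1+f)$. All constants are uniform in $q$ because they come only from \eqref{eq:quadratic-potential}, \eqref{eq:center} and \eqref{eq:base-second-moment}, which depend only on $n$. The ball inclusions \eqref{eq:ball-comparison} are immediate from \eqref{eq:radial-comparison} with $\tau=1$ and $r\ge1$.
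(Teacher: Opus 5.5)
Your proposal is correct and follows essentially the paper's argument. The upper bound comes from the base second moment and the triangle inequality. The lower bound comes from a near-optimal comparison time, an $H_n$-center, the second-moment bound, the upper potential estimate, the moment identity, and finally the lower potential estimate. The paper works at general $\tau$ directly instead of rescaling to $\tau=1$, which spares it from justifying that the extended $\boldsymbol{\psi}^s$ fixes $z'$.

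Two small points. The step $C(1+d^2)\le C(1+f)$ in your potential-bound paragraph uses the lower half of \eqref{eq:quadratic-potential}, not the upper half. Also, if the near-optimal comparison time is $s=-1$, take $c=q$ and $\nu_{q;-1}=\delta_q$.
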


\begin{proof}[Proof of Theorem~\ref{thm:comparison}]
First take $q\in\Reg'_{-\tau}$. The comparison time $-\tau$,
\eqref{eq:base-second-moment}, and the triangle inequality give
\begin{equation}
 D(q)\le d_{-\tau}(p_{-\tau},q)+C\sqrt\tau.
 \label{eq:radial-upper}
\end{equation}
Also $D(q)\ge\sqrt\tau$ by the time inequality.

Fix $L>D(q)$. There is a comparison time $-a$, with
$\tau\le a<L^2$, such that
\begin{equation}
 \Wone^{d_{-a}}(\nu_{z';-a},\nu_{q;-a})<L.
 \label{eq:optimizing-time}
\end{equation}
For $a>\tau$, let $c$ be an $H_n$-center of $q$ at time $-a$;
for $a=\tau$, set $c=q$ and $\nu_{q;-a}=\delta_q$. We have
\begin{align}
 d_{-a}(p_{-a},c)
 &\le C\sqrt a+L+\sqrt{H_n(a-\tau)}\le CL,
 \label{eq:comparison-center}\\
 \int d_{-a}(p_{-a},y)^2\dd\nu_{q;-a}(y)
 &\le2d_{-a}(p_{-a},c)^2+2H_n(a-\tau)\le CL^2.
 \label{eq:comparison-second-moment}
\end{align}
The upper potential estimate gives
\[
 \int f\dd\nu_{q;-a}\le C\left(1+\frac{L^2}{a}\right).
\]
On the other hand, \eqref{eq:moment-solved} gives
\[
 \frac\tau a f(q)
   =\int f\dd\nu_{q;-a}-\left(1-\frac\tau a\right)\frac n2
   \le C\left(1+\frac{L^2}{a}\right).
\]
Since $a<L^2$, we conclude that
$f(q)\le CL^2/\tau$. Using the lower potential estimate at time
$-\tau$, and then $L\searrow D(q)$, gives
\begin{equation}
 d_{-\tau}(p_{-\tau},q)\le C D(q).
 \label{eq:radial-lower}
\end{equation}
Together with \eqref{eq:radial-upper} and
$D\ge\sqrt\tau$, this proves
\eqref{eq:radial-comparison} on the regular set.

By \eqref{eq:f-identities}, $\sqrt f$ is
$1/(2\sqrt\tau)$-Lipschitz for the intrinsic slice distance and
extends continuously to the completion.
The function $D$ is also continuous there, since
$|D(q)-D(q')|\le d_{Z'}(q,q')\le d_{-\tau}(q,q')$.
Regular density thus proves \eqref{eq:radial-comparison} and the
quadratic potential bounds on the whole slice.
Combining these two comparisons yields
\eqref{eq:potential-spacetime}. Finally, setting $\tau=1$ proves
\eqref{eq:ball-comparison}, after increasing $C$.
\end{proof}

\begin{corollary}
\label{cor:proper-embedding}
For every $\tau>0$, the inclusion
\[
 (Z'_{-\tau},d_{-\tau}^{Z'})\longrightarrow(Z',d_{Z'})
\]
is proper and is a homeomorphism onto its image. A sequence on the
slice is intrinsically bounded if and only if its spacetime distance
from $z'$ is bounded. In particular, $Z'_{-\tau}$ is compact if and
only if it is bounded with respect to $d_{Z'}$.
\end{corollary}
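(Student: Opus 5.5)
The plan is to deduce everything from the two-sided comparison \eqref{eq:radial-comparison} in Theorem~\ref{thm:comparison}, together with the inequality $d_{Z'}\le d_{-\tau}^{Z'}$ on $Z'_{-\tau}$ recalled after \eqref{eq:background-slice-distance}. That inequality makes the inclusion $1$-Lipschitz, hence continuous. It is injective because both spaces carry genuine metrics on the same point set.

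\textbf{Boundedness.} A sequence $(q_k)\subset Z'_{-\tau}$ is $d_{-\tau}^{Z'}$-bounded exactly when $d_{-\tau}^{Z'}(p_{-\tau},q_k)$ is bounded. By \eqref{eq:radial-comparison}, with $\tau$ fixed, this holds if and only if $d_{Z'}(z',q_k)$ is bounded. This gives the second assertion.

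\textbf{Properness.} Let $K\subset Z'$ be compact and set $S=K\cap Z'_{-\tau}$. The slice $Z'_{-\tau}=\Time'^{-1}(-\tau)$ is closed in $Z'$, since $\Time'$ is continuous by $|\Time'(x)-\Time'(y)|\le d_{Z'}(x,y)^2$. Hence $S$ is $d_{Z'}$-compact. It is also $d_{Z'}$-bounded, so by the previous step it lies in a closed $d_{-\tau}^{Z'}$-ball. By Theorem~\ref{thm:completion} the slice is proper, so this ball is compact.

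It remains to show $S$ is $d_{-\tau}^{Z'}$-closed. If $q_k\in S$ and $q_k\to q$ intrinsically, then $q_k\to q$ in $d_{Z'}$ as well, so $q\in K$ and hence $q\in S$. Thus $S$ is a closed subset of a compact set, hence compact.

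\textbf{Homeomorphism onto the image.} A continuous injective proper map into a metric (hence Hausdorff, compactly generated) space is closed. Concretely, I would argue sequentially. Suppose $q_k\to q$ in $d_{Z'}$ with all points in the slice. Then $\{q_k\}\cup\{q\}$ is $d_{Z'}$-compact, so by properness it is intrinsically compact. Any intrinsic subsequential limit $q'$ must equal $q$, because intrinsic convergence implies $d_{Z'}$-convergence. Hence $q_k\to q$ intrinsically, and the inverse is continuous.

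\textbf{Compactness.} If $Z'_{-\tau}$ is $d_{Z'}$-bounded, it is intrinsically bounded, and therefore compact by properness of $(Z'_{-\tau},d_{-\tau}^{Z'})$. Conversely, a compact slice is bounded in $d_{-\tau}^{Z'}$, hence in $d_{Z'}\le d_{-\tau}^{Z'}$.

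\textbf{Main difficulty.} There is no real obstacle here. The substantive inputs, namely the comparison \eqref{eq:radial-comparison} and properness from Theorem~\ref{thm:completion}, are already available. The only care needed is to note that the slice is closed in $Z'$ and that compactness is taken in the right topology.
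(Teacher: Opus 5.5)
Your proposal is correct and follows essentially the same route as the paper: you use the $1$-Lipschitz inclusion $d_{Z'}\le d_{-\tau}^{Z'}$, the radial comparison \eqref{eq:radial-comparison} to turn $d_{Z'}$-boundedness into intrinsic boundedness, closedness together with properness of the slice from Theorem~\ref{thm:completion} to get compact preimages, and a subsequence argument that identifies intrinsic limits with the spacetime limit to get continuity of the inverse. Your remark that $Z'_{-\tau}$ is closed in $Z'$ is harmless but not needed, since the intrinsic limit already lies in the slice.
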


\begin{proof}
The inclusion is $1$-Lipschitz. By
\eqref{eq:radial-comparison}, the inverse image of a compact
spacetime set is bounded in the slice metric. It is also closed
there, hence compact by properness of the slice. To see the inverse
continuity directly, a spacetime-convergent slice sequence is
intrinsically bounded, so every subsequence has an intrinsically
convergent subsubsequence. The $1$-Lipschitz inclusion identifies
its limit with the given spacetime limit. This proves continuity
of the inverse. The final assertion follows again from properness
and the radial comparison.
\end{proof}

\section{Spacetime curves and their projections}
\label{sec:action}

In Sections~\ref{sec:action}--\ref{sec:weak-applications}, write
\[
 (X,d)=(Z'_{-1},d_{-1}^{Z'}),\qquad g=g_{-1}^{Z'}.
\]
By Theorem~\ref{thm:completion}, $d=d_g$ on $\Reg'_{-1}$.
We retain $\tau=-\Time'>0$ and the soliton identifications
$\boldsymbol{\varphi}_\tau:\Reg'_{-1}\to\Reg'_{-\tau}$.
We construct regular minimizing geodesics by projecting spacetime
curves to $\Reg'_{-1}$.

\subsection{Reduction to ordinary energy}

Let a regular spacetime curve be written as
$\beta(\tau)=\boldsymbol{\varphi}_\tau(z(\tau))$, $a\le\tau\le b$, where
$0<a<b$. Thus $z(\tau)=\boldsymbol{\varphi}_\tau^{-1}(\beta(\tau))$
is its projection to $\Reg'_{-1}$. If
$\beta'=-\partial_{\Time'}+V$, then
\begin{equation}
 V=\nabla f+(\boldsymbol{\varphi}_\tau)_*z',\qquad
 |V|_{g_{-\tau}^{Z'}}^2
       =\tau\left|z'+\tau^{-1}\nabla f\right|_g^2.
 \label{eq:action-velocity}
\end{equation}
Define
\begin{equation}
 \Act_{a,b}(\beta)
       =\int_a^b\sqrt\tau\bigl(\mathrm{R}(\beta(\tau))+|V|^2\bigr)\dd\tau.
 \label{eq:absolute-action}
\end{equation}
This is Perelman's $\mathcal L$-length between two positive backward
times; see \cite[Section~7.1]{Perelman2002} and
\cite[Chapters~6--7]{MorganTian}. It is twice the
$\mathcal L_-$-length normalized in \cite[Definition~3]{Lott2009}.

\begin{proposition}[Reparametrization]
\label{prop:action-conjugacy}
Set
\[
 s(\tau)=2(a^{-1/2}-\tau^{-1/2}),\qquad
 S=s(b),\qquad \gamma(s)=z(\tau(s)).
\]
Then
\begin{equation}
 \Act_{a,b}(\beta)
 =2\sqrt b\,f(\gamma(S))-2\sqrt a\,f(\gamma(0))
       +\int_0^S|\dot\gamma|_g^2\dd s.
 \label{eq:action-conjugacy}
\end{equation}
\end{proposition}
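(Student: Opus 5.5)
The plan is a direct computation. I would pull the integrand of \eqref{eq:absolute-action} back to the reference slice $\Reg'_{-1}$ using the soliton identifications \eqref{eq:soliton-pullback}, recognize an exact $\tau$-derivative of the potential term, and then absorb the remaining kinetic term by the change of variable $s(\tau)$.

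First I record how the relevant quantities scale under $\boldsymbol{\varphi}_\tau$. From $\boldsymbol{\varphi}_\tau^*g_{-\tau}^{Z'}=\tau g$ we get $|(\boldsymbol{\varphi}_\tau)_*w|^2_{g_{-\tau}^{Z'}}=\tau|w|_g^2$. Since $f\circ\boldsymbol{\varphi}_\tau=f$, the gradient pulls back as $(\boldsymbol{\varphi}_\tau)^*\nabla^{g_{-\tau}^{Z'}}f=\tau^{-1}\nabla^g f$. Since scalar curvature scales inversely with the metric, $\mathrm R(\boldsymbol{\varphi}_\tau(x))=\tau^{-1}\mathrm R_g(x)$. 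These facts are exactly what lies behind \eqref{eq:action-velocity}. Writing $z'=\mathrm dz/\mathrm d\tau$ and expanding that formula, I obtain
\[
 \mathrm R+|V|^2=\tau|z'|_g^2+2\langle z',\nabla f\rangle_g+\frac{\mathrm R_g+|\nabla f|_g^2}{\tau}
 =\tau|z'|_g^2+2\frac{\mathrm d}{\mathrm d\tau}f(z(\tau))+\frac{f(z(\tau))}{\tau}.
\]
The second equality uses the normalization \eqref{eq:tangent-shrinker-normalization}, $\mathrm R_g+|\nabla f|_g^2=f$, on $\Reg'_{-1}$.

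Next I multiply by $\sqrt\tau$. With $F(\tau)=f(z(\tau))$, the two potential terms combine as
\[
 2\sqrt\tau\,F'+\frac{F}{\sqrt\tau}=\frac{\mathrm d}{\mathrm d\tau}\bigl(2\sqrt\tau\,F\bigr).
\]
Integrating this from $a$ to $b$ yields the boundary term $2\sqrt b\,f(z(b))-2\sqrt a\,f(z(a))$. Since $s(a)=0$ and $s(b)=S$, this is the same as $2\sqrt b\,f(\gamma(S))-2\sqrt a\,f(\gamma(0))$.

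It remains to handle $\int_a^b\tau^{3/2}|z'|_g^2\dd\tau$. From $s(\tau)=2(a^{-1/2}-\tau^{-1/2})$ we have $\mathrm ds=\tau^{-3/2}\dd\tau$, and hence $\dot\gamma=\tau^{3/2}z'$. Therefore
\[
 |\dot\gamma|_g^2\dd s=\tau^3|z'|_g^2\,\tau^{-3/2}\dd\tau=\tau^{3/2}|z'|_g^2\dd\tau,
\]
which gives \eqref{eq:action-conjugacy}.

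There is no serious obstacle; the only care needed is bookkeeping. One must keep track of which metric each norm and gradient is taken in when applying \eqref{eq:action-velocity}, and note that, because $f$ is $\boldsymbol{\varphi}_\tau$-invariant, $f(\beta(\tau))=f(z(\tau))$. Regularity of $\beta$ (say piecewise $C^1$ in the regular spacetime) suffices to justify the fundamental theorem of calculus and the change of variables.
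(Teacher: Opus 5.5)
Your proposal is correct and follows essentially the same route as the paper: both expand $\sqrt\tau(\mathrm R+|V|^2)$ using \eqref{eq:action-velocity}, the scaling $\mathrm R=\tau^{-1}\mathrm R_g$, and $\mathrm R_g+|\nabla f|_g^2=f$. Both then recognize $\frac{\mathrm d}{\mathrm d\tau}\bigl(2\sqrt\tau\,f(z(\tau))\bigr)$ and convert $\int_a^b\tau^{3/2}|z'|_g^2\dd\tau$ into $\int_0^S|\dot\gamma|_g^2\dd s$ via $\mathrm ds/\mathrm d\tau=\tau^{-3/2}$.
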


\begin{proof}
Scalar curvature pulls back as $\tau^{-1}\mathrm{R}_g$. Therefore
\eqref{eq:action-velocity} and $\mathrm{R}_g+|\nabla f|_g^2=f$ give
\begin{align*}
 \sqrt\tau(\mathrm{R}+|V|^2)
 &=\tau^{3/2}|z'|_g^2+2\sqrt\tau\,\mathrm df(z')
      +\tau^{-1/2}(\mathrm{R}_g+|\nabla f|_g^2)\\
 &=\tau^{3/2}|z'|_g^2
      +\frac{\mathrm d}{\mathrm d\tau}
           \bigl(2\sqrt\tau\,f(z(\tau))\bigr).
\end{align*}
Since $\mathrm ds/\mathrm d\tau=\tau^{-3/2}$, the first term
integrates to $\int_0^S|\dot\gamma|_g^2\dd s$.
\end{proof}

A related pullback of Perelman's $\mathcal L$-functional to a fixed
smooth Ricci shrinker, followed by integration by parts in the
potential term, appears in Wang--Wang~\cite[Section~8]{WangWangHeat}.
Their backward time is measured from a smooth time slice, and the
resulting formula retains a potential integral. Here the backward-time
clock is anchored at time zero of the tangent flow, so the potential
terms combine into the endpoint contribution in
\eqref{eq:action-conjugacy}.

From now on we take
\begin{equation}
 a=1,\quad b=4,\quad s=2(1-\tau^{-1/2}),\quad
 \tau=(1-s/2)^{-2},\quad 0\le s\le1.
 \label{eq:one-four}
\end{equation}
For endpoints $x,y\in\Reg'_{-1}$, put
\[
 B(x,y)=4f(y)-2f(x),\qquad
 E_g(\gamma)=\int_0^1|\dot\gamma|_g^2\dd s.
\]
Thus $\Act_{1,4}=B+E_g$, and
\[
 \inf_{\beta\text{ regular}}\Act_{1,4}(\beta)
       =B(x,y)+d(x,y)^2.
\]
Indeed, Cauchy--Schwarz bounds energy from below by squared length,
and regular curves with lengths approaching the intrinsic distance
can be parametrized with constant speed. Our goal is to find a
regular curve which attains this infimum.

\subsection{The normal region on a smooth closed flow}

For the moment, let $(M,g(\tau))$, $1\le\tau\le4$, be a smooth
closed backward Ricci flow, so $\partial_\tau g=2\Ric$.
Fix an initial point $x$ at $\tau=1$, and define
\begin{equation}
 \mathcal L(z,\tau)=\inf_{\beta(1)=x,\,\beta(\tau)=z}
       \int_1^\tau\sqrt u\bigl(\mathrm{R}+|\beta'|_{g(u)}^2\bigr)\dd u,
 \qquad \ell(z,\tau)=\frac{\mathcal L(z,\tau)}{2\sqrt\tau}.
 \label{eq:action-value}
\end{equation}
Every endpoint admits a smooth minimizing $\mathcal L$-geodesic
\cite[Lemma~7.2]{MorganTian}. Writing $V=\beta'$, its equation is
\begin{equation}
 \nabla_\tau V+2\Ric^\sharp(V)+\frac{V}{2\tau}
       -\frac12\nabla \mathrm{R}=0;
 \label{eq:action-geodesic}
\end{equation}
see \cite[equation~(7.2)]{Perelman2002} and
\cite[Lemma~6.4]{MorganTian}.

Let $E_\tau(v)$ be the $\mathcal L$-exponential map with
$\beta(1)=x$ and $\beta'(1)=v$. Let $\mathcal U_4\subset T_xM$
be the open set of initial velocities whose $\mathcal L$-geodesics
are the unique, nonconjugate minimizers to their endpoints at time
$4$. The complement of $E_4(\mathcal U_4)$ is a closed set of
measure zero. For $1<\tau\le4$, the map $E_\tau$ is injective and
nonsingular on $\mathcal U_4$; see
\cite[Propositions~6.28 and~6.30, Corollary~6.67]{MorganTian}.
Define $J$ by
\begin{equation}
 E_\tau^*(\dV_{g(\tau)})=J(v,\tau)\dd v,
 \label{eq:action-jacobian}
\end{equation}
where $\dd v$ is induced by $g(1)$ on $T_xM$. The map
\begin{equation}
 \Xi:\mathcal U_4\times(1,4)\longrightarrow M\times(1,4),\qquad
 \Xi(v,\tau)=(E_\tau(v),\tau)
 \label{eq:normal-map}
\end{equation}
is a diffeomorphism onto an open normal region. There, endpoint
variation gives $V=\nabla\ell$
\cite[Corollary~6.29]{MorganTian}, and
\begin{equation}
 \partial_\tau\ell=\frac{\mathrm{R}}2-\frac{|\nabla\ell|^2}2
                         -\frac\ell{2\tau};
 \label{eq:Hamilton-Jacobi}
\end{equation}
see \cite[equation~(6.15)]{Lott2009}.

\subsection{The pointwise Jacobian identity}

Let $\mathcal D=\partial_\tau+\nabla\ell\cdot\nabla$ denote
differentiation along a minimizing $\mathcal L$-geodesic in the
normal region. We use the same symbol $J$ for the function in
\eqref{eq:action-jacobian} and its transport by $\Xi$. Thus
$\mathcal D\log J$ corresponds to
$\mathrm d\log J(v,\tau)/\mathrm d\tau$ at fixed $v$.
The standard logarithmic Jacobian formula is
\begin{equation}
 \mathcal D\log J=\Delta\ell+\mathrm{R};
 \label{eq:log-jacobian}
\end{equation}
see the Jacobian calculation in the proof of
\cite[Proposition~6.81]{MorganTian}, or the volume transport
identities \cite[equations~(6.8) and~(6.17)]{Lott2009}.

\begin{proposition}[Jacobian identity]
\label{prop:jacobian-identity}
Along each minimizing $\mathcal L$-geodesic in the normal region,
the function
\begin{equation}
 Z=-\log J+\ell+\frac n2\log\tau
 \label{eq:jacobian-Z}
\end{equation}
satisfies
\begin{equation}
 \mathcal D^2Z+\frac3{2\tau}\mathcal DZ
       =\left|\Ric+\nabla^2\ell-\frac{g(\tau)}{2\tau}\right|^2\ge0.
 \label{eq:jacobian-identity}
\end{equation}
In particular, $\tau^{3/2}\mathcal DZ$ is nondecreasing.
\end{proposition}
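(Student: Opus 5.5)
The identity is a pointwise computation on the normal region of the smooth closed backward flow. I will express $\mathcal DZ$ through \eqref{eq:log-jacobian} and \eqref{eq:Hamilton-Jacobi}, then differentiate once more along the $\mathcal L$-geodesic using the Bochner formula, the commutator of $\partial_\tau$ with $\Delta$ under $\partial_\tau g=2\Ric$, and the evolution of $\mathrm R$. Throughout, $V=\nabla\ell$ in the normal region, and $\mathcal D=\partial_\tau+\langle\nabla\ell,\nabla\cdot\rangle$ acts on smooth functions of $(z,\tau)$.

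\emph{First derivative.} By \eqref{eq:Hamilton-Jacobi},
$\mathcal D\ell=\partial_\tau\ell+|\nabla\ell|^2=\tfrac12(\mathrm R+|\nabla\ell|^2)-\tfrac{\ell}{2\tau}$.
This is consistent with $\frac{\mathrm d}{\mathrm d\tau}\mathcal L=\sqrt\tau(\mathrm R+|V|^2)$ and $\ell=\mathcal L/(2\sqrt\tau)$. With \eqref{eq:log-jacobian} this gives
\[
 \mathcal DZ=-\Delta\ell-\frac{\mathrm R}2+\frac{|\nabla\ell|^2}2-\frac{\ell}{2\tau}+\frac n{2\tau}.
\]
The key observation is that $\mathcal DZ=\frac n{2\tau}-\operatorname{tr}_g\bigl(\Ric+\nabla^2\ell\bigr)+\mathcal D\ell-\frac{\ell}{\tau}\cdot 0+\dots$ can be regrouped around the traced soliton-type tensor $T=\Ric+\nabla^2\ell-\frac{g}{2\tau}$: indeed $-\operatorname{tr}T=-\mathrm R-\Delta\ell+\frac n{2\tau}$. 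So $\mathcal DZ=-\operatorname{tr}T+\frac12(\mathrm R+|\nabla\ell|^2)-\frac{\ell}{2\tau}$.

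\emph{Second derivative, trace part.} I will compute $\mathcal D\Delta\ell$ by applying $\Delta$ to \eqref{eq:Hamilton-Jacobi}. Three inputs are needed:
\begin{itemize}
 \item the commutator $\partial_\tau\Delta\ell=\Delta\partial_\tau\ell-2\langle\Ric,\nabla^2\ell\rangle$, where the Christoffel terms cancel by the contracted Bianchi identity;
 \item the Bochner formula $\tfrac12\Delta|\nabla\ell|^2=|\nabla^2\ell|^2+\langle\nabla\ell,\nabla\Delta\ell\rangle+\Ric(\nabla\ell,\nabla\ell)$;
 \item the backward evolution $\partial_\tau\mathrm R=-\Delta\mathrm R-2|\Ric|^2$.
\end{itemize}
Together these give
\[
 \mathcal D(\Delta\ell+\mathrm R)=-|\nabla^2\ell|^2-2\langle\Ric,\nabla^2\ell\rangle-|\Ric|^2-\Ric(\nabla\ell,\nabla\ell)+\langle\nabla\ell,\nabla\mathrm R\rangle-\frac{\Delta\ell}{2\tau}+(\text{terms from }\partial_\tau\mathrm R).
\]
The quadratic part is $-|\Ric+\nabla^2\ell|^2$, which equals $-|T|^2-\frac1\tau\operatorname{tr}(\Ric+\nabla^2\ell)+\frac n{4\tau^2}$.

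\emph{Second derivative, $\ell$ part.} For $\mathcal D\bigl(\tfrac12(\mathrm R+|V|^2)-\tfrac\ell{2\tau}\bigr)$ I will use Perelman's formula along $\mathcal L$-geodesics, obtained from \eqref{eq:action-geodesic}:
\[
 \frac{\mathrm d}{\mathrm d\tau}(\mathrm R+|V|^2)=\partial_\tau\mathrm R+2\langle\nabla\mathrm R,V\rangle-2\Ric(V,V)-\frac{\mathrm R+|V|^2}{\tau}.
\]
Here the curvature terms $\langle\nabla\mathrm R,V\rangle$, $\Ric(V,V)$ and $\partial_\tau\mathrm R$ should cancel against those produced by the trace part. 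The leftover $\tau^{-1}$ and $\tau^{-2}$ terms should then recombine, via the expression for $\mathcal DZ$ above, into exactly $-\frac3{2\tau}\mathcal DZ$. The main obstacle is this bookkeeping: the time-derivative terms must match precisely, and in particular the contribution of $\frac{d}{d\tau}(\ell/2\tau)=\frac{\mathcal D\ell}{2\tau}-\frac{\ell}{2\tau^2}$ must combine with $\frac{\Delta\ell}{2\tau}$ and $\frac n{4\tau^2}$ to produce the coefficient $3/2$. I will first check the coefficients on the Gaussian soliton, where $T=0$ and $Z$ should be explicit, to fix any sign slips.

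\emph{Monotonicity.} Finally,
\[
 \frac{\mathrm d}{\mathrm d\tau}\bigl(\tau^{3/2}\mathcal DZ\bigr)=\tau^{3/2}\Bigl(\mathcal D^2Z+\frac3{2\tau}\mathcal DZ\Bigr)=\tau^{3/2}|T|^2\ge0,
\]
so $\tau^{3/2}\mathcal DZ$ is nondecreasing along each minimizing $\mathcal L$-geodesic in the normal region.
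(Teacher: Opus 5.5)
Your route differs from the paper's. The paper does not compute pointwise. It invokes Lott's integrated identity \cite[Corollary~7, equation~(6.26)]{Lott2009} for densities transported by $\nabla\ell$, for which $\rho(E_\tau(v),\tau)J(v,\tau)$ is constant, and localizes it by concentrating the densities in thin normal tubes. A direct Bochner computation is a legitimate self-contained alternative, and most of your inputs are right:
\begin{itemize}
\item the first-derivative formula is correct. Your ``key observation'' line is garbled, but its conclusion $\mathcal DZ=-\operatorname{tr}_gT+\mathcal D\ell$ holds.
\item the commutator $\partial_\tau\Delta\ell=\Delta\partial_\tau\ell-2\langle\Ric,\nabla^2\ell\rangle$ is correct.
\item the trace part is correct once its unspecified remainder is identified as exactly $\tfrac12\partial_\tau\mathrm R$, because $\tfrac12\Delta\mathrm R+\partial_\tau\mathrm R=\tfrac12\partial_\tau\mathrm R-|\Ric|^2$.
\end{itemize}

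However, the formula you quote along $\mathcal L$-geodesics is wrong, and with it the bookkeeping you defer does not close. Differentiating $|V|^2_{g(\tau)}$ using $\partial_\tau g=2\Ric$ and \eqref{eq:action-geodesic} gives
\[
 \frac{\mathrm d}{\mathrm d\tau}(\mathrm R+|V|^2)
 =\partial_\tau\mathrm R+2\langle\nabla\mathrm R,V\rangle-2\Ric(V,V)-\frac{|V|^2}{\tau}.
\]
There is no $-\mathrm R/\tau$ term. Perelman writes this as $\partial_\tau\mathrm R+\frac{\mathrm R}{\tau}+2\langle\nabla\mathrm R,V\rangle-2\Ric(V,V)-\frac1\tau(\mathrm R+|V|^2)$, and you dropped the $+\mathrm R/\tau$; compare the speed equation in the proof of Lemma~\ref{lem:initial-cap}.

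With your version the computation ends at $\mathcal D^2Z+\frac3{2\tau}\mathcal DZ=|T|^2-\frac{\mathrm R}{2\tau}$. This is not the claimed identity and has no sign. The Gaussian check you propose cannot catch the error, since $\mathrm R\equiv0$ there. Moreover, $T\neq0$ on flat space for $\ell$ based at $\tau=1$, where $\nabla^2\ell=g/(2(\tau-\sqrt\tau))$.

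With the correct formula, set $Q=\mathcal D\ell$. Then
\[
 \mathcal D(\Delta\ell+\mathrm R)=-|\Ric+\nabla^2\ell|^2-\Ric(V,V)+\langle V,\nabla\mathrm R\rangle-\frac{\Delta\ell}{2\tau}+\frac12\partial_\tau\mathrm R,
\]
\[
 \mathcal DQ=\frac12\partial_\tau\mathrm R+\langle\nabla\mathrm R,V\rangle-\Ric(V,V)-\frac{|V|^2}{2\tau}-\frac{Q}{2\tau}+\frac{\ell}{2\tau^2}.
\]
Hence all curvature-derivative terms cancel in $\mathcal D^2Z=-\mathcal D(\Delta\ell+\mathrm R)+\mathcal DQ-\frac n{2\tau^2}$. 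Substituting $|\Ric+\nabla^2\ell|^2=|T|^2+\frac{\mathrm R+\Delta\ell}{\tau}-\frac n{4\tau^2}$ then gives
\[
 \mathcal D^2Z=|T|^2+\frac3{2\tau}\Bigl(\Delta\ell+\frac{\mathrm R}2-\frac{|V|^2}2+\frac\ell{2\tau}-\frac n{2\tau}\Bigr)=|T|^2-\frac3{2\tau}\mathcal DZ,
\]
as required.

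You can also bypass the geodesic equation entirely. Computing $\mathcal D|\nabla\ell|^2$ from \eqref{eq:Hamilton-Jacobi} and $\partial_\tau g^{ij}=-2R^{ij}$ gives the same expression.

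Once corrected, your argument gives a transparent, self-contained derivation that shows where the coefficient $3/2$ comes from. The paper's route is shorter but rests on Lott's computation plus a localization step.
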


This is the local form of \cite[Corollary~7, equation~(6.26)]{Lott2009}.
Indeed, for a smooth density $\rho$ transported by $\nabla\ell$,
the product $\rho(E_\tau(v),\tau)J(v,\tau)$ is constant in $\tau$;
applying that identity to arbitrary such densities supported in a
normal tube gives \eqref{eq:jacobian-identity} pointwise.

\subsection{Uniform Jacobian comparison}

Return to the approximating Ricci flows $(M_i,g_i(t))$ and fix a
regular point $x\in\Reg'_{-1}$. We write $\Act_i$ for the
$\mathcal L$-length in \eqref{eq:absolute-action} with $a=1$, $b=4$,
and metric $g_i(-\tau)$, and add the subscript $i$ to the smooth-flow
objects defined above. Suppose the minimizing $\mathcal L$-geodesics
under consideration start at $x_i\to x$ smoothly and satisfy
\begin{equation}
 \Act_i(\beta_i)\le A_0.
 \label{eq:uniform-action-bound}
\end{equation}
By \eqref{eq:scalar-lower}, after increasing $i$ we have
\begin{equation}
 \mathrm{R}_{g_i(-\tau)}\ge-1\qquad(1\le\tau\le4).
 \label{eq:scalar-minus-one}
\end{equation}
Together with \eqref{eq:uniform-action-bound}, this gives
\begin{equation}
 \int_1^4\sqrt\tau|V_i|^2\dd\tau\le E_0:=A_0+\frac{14}3.
 \label{eq:physical-energy}
\end{equation}

The initial localization follows the argument in
\cite[Lemma~3.22, equations~(3.84)--(3.85)]{ChenWangB}. We also
record the uniform initial Jacobian estimate needed below.

\begin{lemma}[Initial regular cap]
\label{lem:initial-cap}
There are $h>0$, $\theta\in(0,h)$, and $V_0,C_0<\infty$, independent
of large $i$ and of the minimizing curve, such that these curves
remain in a fixed compact regular coordinate cap on $[1,1+h]$,
$|V_i(1)|\le V_0$, and
\begin{equation}
 \mathcal DZ_i(v,1+\theta)\ge-C_0.
 \label{eq:initial-Z}
\end{equation}
\end{lemma}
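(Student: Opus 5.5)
We need three things: localization of the minimizing curves on a short initial interval in a fixed regular cap, a bound on the initial velocity, and a lower bound on $\mathcal DZ_i$ at time $1+\theta$. The plan follows \cite[Lemma~3.22]{ChenWangB}.

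\emph{Step 1: localization.} Since $x\in\Reg'_{-1}$, fix $r_0=\min\{1,\rRm(x)\}/4$. Smooth convergence on the compact worldline cap $B_{g}(x,2r_0)\times[1,1+r_0^2]$ gives, for large $i$, a cap on which $|\Rm_i|\le Cr_0^{-2}$ and the metrics $g_i(-\tau)$ are uniformly equivalent to $g_i(-1)$. Suppose a minimizing curve $\beta_i$ leaves $B(x_i,r_0)$ before time $1+h$. By Cauchy--Schwarz and uniform metric equivalence inside the cap,
\[
 r_0\le C\int_1^{1+h}|V_i|\dd\tau\le C\sqrt h\Bigl(\int_1^4\sqrt\tau|V_i|^2\dd\tau\Bigr)^{1/2}\le C\sqrt{hE_0},
\]
using \eqref{eq:physical-energy}. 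This is impossible once $h<c\,r_0^2/E_0$. So for such $h$ every curve stays in the cap on $[1,1+h]$.

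\emph{Step 2: initial velocity.} Inside the cap the $\mathcal L$-geodesic equation \eqref{eq:action-geodesic} has bounded coefficients: $|\Ric|$ and $|\nabla\mathrm R|\le C$ by Shi's estimates on the slightly larger cap. Writing $W=\sqrt\tau V$, one gets $|\nabla_\tau W|\le C(|W|+1)$. Grönwall then shows that $|V_i|$ is comparable to $|V_i(1)|$, up to additive constants, on $[1,1+h]$. Integrating, $|V_i(1)|^2h\le C(E_0+1)$, so $|V_i(1)|\le V_0$.

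\emph{Step 3: the Jacobian bound.} On $[1,1+h]$ the curves lie in a region of bounded geometry and have bounded initial velocity. The $\mathcal L$-exponential map $E_\tau$ near $\tau=1$ is therefore a uniformly controlled smooth map. Its Jacobian satisfies $J(v,\tau)=(\tau-1)^n(1+O(\tau-1))$ uniformly in $|v|\le V_0$, because the derivative of $E_\tau$ in $v$ is $(\tau-1)\Id+O((\tau-1)^2)$ with $C^1$ bounds on the error. Hence
\[
 \mathcal D\log J=\frac{n}{\tau-1}+O(1),
\]
uniformly in $i$ and $v$. Also $\mathcal D\ell$ is bounded along the curve, since by \eqref{eq:Hamilton-Jacobi} and $V=\nabla\ell$,
\[
 \mathcal D\ell=\partial_\tau\ell+|\nabla\ell|^2=\tfrac12(\mathrm R+|V|^2)-\frac{\ell}{2\tau}.
\]
Here $\ell$ is controlled by the action over the short interval, and $|V|\le C$ by Step 2. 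So $\mathcal DZ=-\mathcal D\log J+\mathcal D\ell+\tfrac n{2\tau}\ge-\tfrac n\theta-C$. Choosing any fixed $\theta\in(0,h)$ gives \eqref{eq:initial-Z} with $C_0=n/\theta+C$.

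\emph{Main obstacle.} The delicate point is making the Jacobian expansion uniform over all minimizing curves. It requires the normal region to contain the whole initial segment, and it requires the asymptotics of $J$ to be controlled in $C^1$, not just $C^0$, in $\tau$. I would handle this by working in fixed exponential coordinates on the cap. In those coordinates the rescaled geodesic equation in $\sigma=2\sqrt\tau$ is a smooth ODE with uniformly bounded coefficients, so smooth dependence on $(v,\tau)$ gives the expansion with uniform constants. Conjugate points cannot occur on $(1,1+h]$ for small $h$, by this same expansion.
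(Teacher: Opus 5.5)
Your proposal is correct and follows essentially the same route as the paper. It localizes via Cauchy--Schwarz against the energy bound, obtains an ODE speed bound from the $\mathcal L$-geodesic equation, and combines the Jacobi-field expansion $\mathcal D\log J=n/(\tau-1)+O(1)$ with $\mathcal D\ell=O(1)$ from the Hamilton--Jacobi identity; the only differences are cosmetic (you rescale to $W=\sqrt\tau V$ and read off the Jacobian asymptotics from smooth parameter dependence rather than from the explicit matrix equation $A''+BA'+CA=0$ with $A(1)=0$, $A'(1)=\mathrm{Id}$).
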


\begin{proof}
Choose a relatively compact regular spacetime neighborhood about
$x$. In its time-preserving coordinates the pulled-back metrics
are uniformly equivalent and the coordinate drift is uniformly
bounded. Before a first exit, displacement on $[1,1+h]$ is bounded
by
\[
 C\int_1^{1+h}|V_i|\dd\tau+Ch\le C\sqrt{hE_0}+Ch.
\]
For sufficiently small $h$, this is less than the distance to the
lateral boundary. Thus an exit cannot occur in that interval.
The regular cap has uniform bounds for $\Ric_i$ and $\nabla \mathrm{R}_i$.
Equation~\eqref{eq:action-geodesic} gives
\[
 \frac{\mathrm d}{\mathrm d\tau}|V_i|^2
 =-2\Ric_i(V_i,V_i)+\langle\nabla \mathrm{R}_i,V_i\rangle
                         -\frac{|V_i|^2}\tau,
\]
so the absolute derivative is bounded by $C(1+|V_i|^2)$.
Some time in this interval has $|V_i|^2\le E_0/h$.
A simple ODE estimate then bounds the speed throughout the cap.

We give the corresponding Jacobi-field estimate. In regular local
frames along the curve, let $A_i(\tau)$ have columns given by the
components of $Y_{i,a}(\tau)=\mathrm dE_{i,\tau}(e_a)$, where
$\{e_a\}$ is $g_i(-1)$-orthonormal and the frames agree with it at
$\tau=1$. The $\mathcal L$-Jacobi equation has the form
\[
 A_i''+B_iA_i'+C_iA_i=0,\qquad A_i(1)=0,\qquad A_i'(1)=\Id;
\]
see \cite[Lemma~6.10, equation~(6.6), and Lemma~6.19]{MorganTian}.
Primes denote differentiation in $\tau$. Smooth convergence on the
fixed regular cap bounds the metric coefficients and their
required derivatives. Together with the speed bound, this gives
uniform bounds for $B_i$ and $C_i$, independent of $i$ and of the
minimizing curve. The ODE first bounds $A_i$ and $A_i'$ uniformly.
Integrating
\[
 A_i'(1+q)-\Id
       =-\int_1^{1+q}(B_iA_i'+C_iA_i)\dd\tau
\]
and then integrating once more gives, uniformly as $q\searrow0$,
\[
 A_i(1+q)=q\Id+O(q^2),\qquad A_i'(1+q)=\Id+O(q).
\]
Let $G_i(\tau)$ be the metric matrix in these frames. Then
$G_i(1)=\Id$, and $G_i$, $G_i^{-1}$, and $G_i'$ are uniformly
bounded. For sufficiently small $q>0$, $\det A_i(1+q)>0$, and
\[
 J_i=\det A_i\sqrt{\det G_i},\qquad
 \mathcal D\log J_i
   =\operatorname{tr}(A_i'A_i^{-1})
       +\frac12\operatorname{tr}(G_i^{-1}G_i')
   =\frac nq+O(1).
\]
Choose a fixed sufficiently small $\theta\in(0,h)$. The bounded
speed and scalar curvature give $\ell_i(1+\theta)=O(\theta)$,
while \eqref{eq:Hamilton-Jacobi} gives
$\mathcal D\ell_i(1+\theta)=O(1)$. Substitution in
\eqref{eq:jacobian-Z} yields
$\mathcal DZ_i(1+\theta)=-n/\theta+O(1)\ge-C_0$,
as required.
\end{proof}

\begin{proposition}[Endpoint-to-interior Jacobian comparison]
\label{prop:jacobian-comparison}
There is $C_J<\infty$ such that every minimizing
$\mathcal L$-geodesic in the normal region satisfying \eqref{eq:uniform-action-bound} obeys
\begin{equation}
 J_i(v,4)\le C_J J_i(v,\tau)
       \qquad(1+\theta\le\tau<4).
 \label{eq:jacobian-comparison}
\end{equation}
The constant depends only on $n$, $A_0$, $\theta$, and the uniform
geometry of the initial cap.
\end{proposition}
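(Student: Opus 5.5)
The plan is to integrate the monotonicity in Proposition~\ref{prop:jacobian-identity} from the initial time $1+\theta$ to $\tau$ and then to $4$. Doing so bounds $\log J_i(v,4)-\log J_i(v,\tau)$ from above by terms involving $\ell_i$ and the action, which are controlled by \eqref{eq:uniform-action-bound}. Write $Z_i=-\log J_i+\ell_i+\frac n2\log\tau$ along the fixed minimizing $\mathcal L$-geodesic with initial velocity $v$. By Proposition~\ref{prop:jacobian-identity}, $\tau^{3/2}\mathcal DZ_i$ is nondecreasing on $(1,4]$. Combining this with Lemma~\ref{lem:initial-cap} gives, for $\sigma\ge1+\theta$,
\[
 \mathcal DZ_i(v,\sigma)\ge \Bigl(\tfrac{1+\theta}{\sigma}\Bigr)^{3/2}\mathcal DZ_i(v,1+\theta)\ge -C_0 .
\]
Integrating from $\tau$ to $4$ yields $Z_i(v,4)-Z_i(v,\tau)\ge -3C_0$. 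Unwinding the definition of $Z_i$, this reads
\[
 \log\frac{J_i(v,4)}{J_i(v,\tau)}\le 3C_0+\ell_i(v,4)-\ell_i(v,\tau)+\frac n2\log\frac4\tau .
\]
The last term is at most $\frac n2\log 4$.

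It remains to bound $\ell_i(v,4)-\ell_i(v,\tau)$ uniformly from above. Since the curve is a minimizer on $[1,4]$, each restriction to $[1,\tau]$ is also minimizing. Hence
\[
 \mathcal L_i(\beta_i(\tau),\tau)=\int_1^\tau\sqrt u(\mathrm R+|V_i|^2)\dd u ,\qquad
 \mathcal L_i(\beta_i(4),4)=\Act_i(\beta_i)\le A_0 .
\]
This gives $\ell_i(v,4)\le A_0/4$. For the lower bound on $\ell_i(v,\tau)$, I will use \eqref{eq:scalar-minus-one}:
\[
 \int_1^\tau\sqrt u(\mathrm R+|V_i|^2)\dd u\ge-\int_1^4\sqrt u\dd u=-\tfrac{14}3 ,
\]
so $\ell_i(v,\tau)\ge -\frac{14}{3}\cdot\frac1{2\sqrt\tau}\ge-\frac73$. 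Therefore $\ell_i(v,4)-\ell_i(v,\tau)\le A_0/4+7/3$. Exponentiating, the proposition holds with
\[
 C_J=\exp\bigl(3C_0+A_0/4+7/3+\tfrac n2\log4\bigr).
\]
This constant depends only on $n$, $A_0$, and on $C_0$ and $\theta$ from the initial cap.

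The main point requiring care is applying the monotonicity of $\tau^{3/2}\mathcal DZ_i$ on the whole interval $[1+\theta,4]$. The normal region is defined via $\mathcal U_4$, so for $v\in\mathcal U_4$ the curve is a unique, nonconjugate minimizer up to time $4$. By the cited properties of $E_\tau$, $J_i(v,\cdot)>0$ is smooth on $(1,4]$ and $\ell_i$ is smooth near the curve. Consequently Proposition~\ref{prop:jacobian-identity} applies along the entire segment, with one-sided continuity at $\tau=4$ if needed.

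One should also confirm that the sign in the lower bound $\mathcal DZ_i\ge-C_0$ survives the weight. This holds because $\tau^{3/2}\mathcal DZ_i(\sigma)\ge(1+\theta)^{3/2}\mathcal DZ_i(1+\theta)\ge-(1+\theta)^{3/2}C_0$. Dividing by $\sigma^{3/2}\ge1$ only shrinks the negative bound in magnitude, so $\mathcal DZ_i(\sigma)\ge-C_0$ as claimed.
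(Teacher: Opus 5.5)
Your proposal is correct and follows the paper's proof essentially step for step. You integrate the monotonicity of $\tau^{3/2}\mathcal DZ_i$ starting from the initial bound of Lemma~\ref{lem:initial-cap}, then control $\ell_i(4)-\ell_i(\tau)$ using the action bound and $\mathrm R\ge-1$; the only difference is that you bound the needed one-sided quantities $\ell_i(4)\le A_0/4$ and $\ell_i(\tau)\ge-7/3$ with explicit constants, whereas the paper records the two-sided bound $|\ell_i|\le C(A_0)$.
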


\begin{proof}
Every initial segment of a minimizing $\mathcal L$-geodesic is
minimizing. By \eqref{eq:scalar-minus-one},
\[
 -\int_1^\tau\sqrt u\dd u
 \le\mathcal L_i(E_{i,\tau}(v),\tau)
 \le A_0+\int_\tau^4\sqrt u\dd u.
\]
Thus $|\ell_i(E_{i,\tau}(v),\tau)|\le C(A_0)$ on $[1,4]$.
Proposition~\ref{prop:jacobian-identity} and
Lemma~\ref{lem:initial-cap} imply
$\tau^{3/2}\mathcal DZ_i(\tau)\ge-C$ for
$\tau\ge1+\theta$. Integration gives
$Z_i(4)-Z_i(\tau)\ge-C$. Hence
\[
 \log\frac{J_i(v,4)}{J_i(v,\tau)}
 =-[Z_i(4)-Z_i(\tau)]
      +\ell_i(4)-\ell_i(\tau)+\frac n2\log\frac4\tau\le C.
\]
\end{proof}

\section{Last entry and weak convexity}
\label{sec:weak}

We prove the following weak-convexity statement.

\begin{theorem}[Weak convexity]
\label{thm:weak}
Set $g=g_{-1}^{Z'}$. For every $x\in\Reg'_{-1}$,
there is a Borel null set $N_x\subset\Reg'_{-1}$ such that each
$y\in\Reg'_{-1}\setminus N_x$ is joined to $x$ by a unique regular
minimizing geodesic of length
\[
 d_g(x,y)=d_{-1}^{Z'}(x,y).
\]
Here null sets are measured by $\dV_g$.
\end{theorem}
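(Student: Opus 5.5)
The plan is to run the last-entry argument of Chen--Wang \cite[Lemmas~3.22--3.23]{ChenWangB} on the approximating flows $(M_i,g_i)$, combined with the reduction of Proposition~\ref{prop:action-conjugacy}.

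Fix $x\in\Reg'_{-1}$, a bounded ball $B_g(p,A)$, and a small scale $\rho>0$. The goal is to show that the set of $y\in B_g(p,A)\cap\{\rRm\ge2\rho\}$ which are \emph{not} joined to $x$ by a regular minimizing geodesic has $g$-volume $o(1)$ as $\rho\searrow0$. Since $B_g(p,A)\cap\{\rRm<2\rho\}$ has volume $O(\rho^{4-\theta})$ by \eqref{eq:slice-bad-volume}, letting $\rho\to0$ and then $A\to\infty$ produces the null set $N_x$. Uniqueness for almost every $y$ is handled separately at the end.

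On $M_i$, take $x_i\to x$ and consider the $\mathcal L$-exponential map from $(x_i,\tau=1)$ to $\tau=4$. The endpoints are points $y_i\to\boldsymbol{\varphi}_4(y)$, i.e.\ the soliton image of $y$. Minimal $\mathcal L$-length to such endpoints is bounded by $B(x,y)+d(x,y)^2+o(1)$, which is bounded on $B_g(p,A)$. So \eqref{eq:uniform-action-bound} holds with some $A_0=A_0(A)$, and Lemma~\ref{lem:initial-cap} and Proposition~\ref{prop:jacobian-comparison} apply.

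Call an endpoint \emph{bad} if its minimizing $\mathcal L$-geodesic enters the pushed-forward high-curvature region $\{\rRm<2\rho'\}$ at some $\tau\in[1+\theta,4)$, with $\rho'\ll\rho$ chosen by coarea. For a bad geodesic, record its \emph{last} entry time $\tau_*$ into this region. By coarea applied to the distance-like function $\rRm$, using \eqref{eq:parabolic-bad-volume}, choose a level $\rho'\in[\rho^2,\rho]$ whose level hypersurfaces $\Sigma_\tau$ have small integrated area $\int\Area(\Sigma_\tau)\,d\tau\le C\rho^{3-\theta}$ on bounded regions.

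The set of bad initial velocities maps, via last contact, onto $\bigcup_\tau\Sigma_\tau$. The bad endpoint set at $\tau=4$ is the image of this contact set under the flow of $\nabla\ell$. Its volume is then bounded by
\[
 \int_{1+\theta}^4\int_{\Sigma_\tau}|\nabla\ell-\text{(normal speed)}|\,J_i(4)/J_i(\tau)\,d\mathcal H^{n-1}\,d\tau .
\]
The Jacobian ratio is controlled by Proposition~\ref{prop:jacobian-comparison}. The relative speed $|V_i|$ needs a uniform bound at the contact points. Here the $V_i$ near the last-entry point lie in the region $\rRm\ge\rho'$, so I will instead bound the speed through the energy: Proposition~\ref{prop:action-conjugacy}, together with the constraint that the projected curve has energy close to $d^2$, shows projected curves are nearly constant-speed geodesics, giving $|\dot\gamma|\le C(A)$. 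Via \eqref{eq:action-velocity} and \eqref{eq:linear-drift} this bounds $|V|$ on the bounded region. Hence the bad volume is at most $C\rho^{3-\theta}\to0$.

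For good endpoints, the minimizing $\mathcal L$-geodesics stay in $\{\rRm\ge\rho'\}\cap\{f\le C\}$, a compact regular domain with smooth convergence. Arzelà--Ascoli then gives a regular limit curve $\beta$ attaining $\inf\Act_{1,4}=B(x,y)+d(x,y)^2$; lower semicontinuity of the action in the limit and the lower bound $\Act_{1,4}\ge B+d^2$ give equality. By \eqref{eq:action-conjugacy} its projection $\gamma$ has $E_g(\gamma)=d(x,y)^2$, so $\gamma$ is a regular minimizing geodesic of length $d_g(x,y)=d_{-1}^{Z'}(x,y)$ (the equality of distances coming from Theorem~\ref{thm:completion}).

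For uniqueness, among $y$ admitting a regular minimizer, those with two regular minimizers lie in the cut locus of $d_g(x,\cdot)$ within $\Reg'_{-1}$. At such a point $d_g(x,\cdot)$ fails to be differentiable, and since this function is locally Lipschitz, Rademacher's theorem makes this set null.

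The main obstacle is the uniform relative-speed bound at last-entry points: the energy control only gives an $L^2$ bound on speed, and converting it to a pointwise bound on the contact set is delicate. I would first try the endpoint energy identity along the final regular segment $[\tau_*,4]$, where the geodesic is smooth and satisfies \eqref{eq:action-geodesic} with controlled curvature, to propagate the bound backward from $\tau=4$.
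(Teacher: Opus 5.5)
Your architecture matches the paper's: a last-entry argument against a region with small inner boundary, the flux formula weighted by the Jacobian ratio of Proposition~\ref{prop:jacobian-comparison}, compactness of good curves via \eqref{eq:action-conjugacy}, and Rademacher for uniqueness. However, the step you flag as the main obstacle is a genuine gap, and neither of your suggestions closes it.

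For a bad curve, the soliton projection $\gamma$ is available only on the terminal segment after the last contact. So the global identity \eqref{eq:action-conjugacy}, with energy near $d(x,y)^2$, says nothing about it. Near-minimality of the terminal energy would in any case give only $L^2$ control of $|\dot\gamma|$.

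Your fallback, propagating a bound through \eqref{eq:action-geodesic} with curvature control, fails quantitatively. On the terminal segment you only know $|\Rm|\le C(\rho')^{-2}$ and $|\nabla\mathrm R|\le C(\rho')^{-3}$. Any Gr\"onwall bound therefore carries a factor like $e^{C(\rho')^{-2}}$, which destroys the gain $\rho^{3-\theta}$. You also have no pointwise speed bound at $\tau=4$ to start from.

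The missing idea is autonomy of the projected Lagrangian. Perelman's Lagrangian is explicitly time dependent, and its Hamiltonian changes at curvature-sized rates. After the soliton projection, the reparametrization $s=2(1-\tau^{-1/2})$, and subtraction of the total derivative of $2\sqrt\tau f$, the projected Lagrangian $\widehat L_i=a_i(v,v)+2b_i(v)+c_i$ of \eqref{eq:lagrangian-coefficients} has coefficients converging smoothly to the autonomous data $(g,0,0)$. Hence its Euler--Lagrange energy $e_i=a_i(v,v)-c_i+2$ satisfies $|e_i'|\le C\epsilon_{i,\xi}e_i$ on the terminal segment, independently of the curvature scale. Near-conservation of $e_i$, together with the integrated bound \eqref{eq:terminal-integral-energy} and a lower bound on $1-\bar s$, gives the pointwise, $\xi$-independent bound of Lemma~\ref{lem:relative-speed}.

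Several supporting steps are also missing.

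\begin{itemize}
\item \textbf{Fixed endpoint set.} The paper fixes a compact $K\Subset\Reg'_{-1}$ at positive distance from the inner boundary for all small $\xi$. This gives the terminal separation $1-\bar s\ge\delta_b/8$ used above. With your endpoint set $B_g(p,A)\cap\{\rRm\ge2\rho\}$, this separation degenerates with $\rho$ and would at least have to be tracked.
\item \textbf{Spatial localization.} Bad curves on $M_i$ may leave every compact set on which $\phi_i$ and the soliton coordinates are defined. The domain therefore needs an outer face $\{2\sqrt f=L_\xi\}$. The paper excludes that face as a last-contact location by the backward Gr\"onwall radius bound $R_*$ of Lemma~\ref{lem:last-entry-localization}, which is independent of $\xi$. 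Without such an argument, your claim that good curves stay in $\{f\le C\}$ is unsupported.
\item \textbf{Normal speed of the moving boundary.} The flux inequality \eqref{eq:flux-inequality} needs the normal speed of the moving boundary, not just its area. Your use of \eqref{eq:parabolic-bad-volume} suggests $\Sigma_\tau$ is a fixed spacetime level of $\rRm$. On the reference slice, that corresponds to the moving level $2\rho'/\sqrt\tau$, whose normal speed is not controlled. The paper instead transports one smooth regular level set $\Sigma_\xi$ of $\eta_\xi$ on the reference slice by $\Psi_i$. The boundary velocity is then the bounded drift $\partial_\tau\Psi_i$.
\end{itemize}
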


We obtain it from the quantitative compact-containment estimate below.
Fix $x\in\Reg'_{-1}$, a compact set $K\Subset\Reg'_{-1}$, and
$\eps\in(0,1)$. All constants in this section may depend on
$x$, $K$, $\eps$, and the fixed tangent flow.
For a compact set $D\Subset\Reg'_{-1}$ containing $x$, define
\[
 G_x(D)=\{y\in D:\text{a minimizing $g$-geodesic from $x$ to $y$
 has image in $D$}\}.
\]
Minimality here is for the intrinsic distance on all of $\Reg'_{-1}$.

\begin{theorem}[Quantitative compact containment]
\label{thm:quantitative}
There are $L,C<\infty$ and $\xi_0>0$ such that, for every
$0<\xi<\xi_0$, there is a compact set $D_\xi\Subset\Reg'_{-1}$ with
\begin{equation}
 \{x\}\cup K\subset\operatorname{int}_{\Reg'_{-1}}D_\xi,
 \qquad D_\xi\subset\{2\sqrt f\le L+1\}\cap\{\rRm\ge\xi\},
 \label{eq:Dxi-containment}
\end{equation}
and
\begin{equation}
 \Vol_g\bigl(K\setminus G_x(D_\xi)\bigr)\le C\xi^{3-\eps}.
 \label{eq:quantitative-attainment}
\end{equation}
\end{theorem}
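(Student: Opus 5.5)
We will run the last-entry argument of Chen--Wang \cite[Lemmas~3.22--3.23]{ChenWangB} on the approximating flows $(M_i,g_i)$ and pass to the limit using the reduction of Proposition~\ref{prop:action-conjugacy}. The set $K$ is compact and regular, so \eqref{eq:quadratic-potential} gives $\sup_K f<\infty$ and $\sup_K d(x,\cdot)<\infty$. For $y\in K$, the infimal action $B(x,y)+d(x,y)^2$ is therefore bounded by a constant $A_0-1$. We choose $L$ so that every curve with action at most $A_0$ has soliton projection inside $\{2\sqrt f\le L\}$. This follows from \eqref{eq:action-conjugacy}: $E_g(\gamma)\le C$ gives a length bound, and then \eqref{eq:linear-drift} bounds $\sqrt f$ along $\gamma$.

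For each large $i$, let $\mathcal L_i$ be based at $x_i\to x$ on $[1,4]$. Every $y\in K$, transported to $\boldsymbol{\varphi}_4(y)$, pulls back to endpoints $y_i$. For $y_i$ in the normal region, we take the minimizing $\mathcal L$-geodesic $\beta_i$. Its action is at most $A_0$: compare with a nearly minimizing regular curve, using smooth convergence on a compact regular neighbourhood. The bad endpoints are those whose $\beta_i$ enters the region $\{\rRm<2\xi\}$, read through the convergence maps in the soliton tube over $\{2\sqrt f\le L+1\}\times[1,4]$.

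Coarea in $\rho\in[\xi,2\xi]$, applied to the bad-volume bound \eqref{eq:parabolic-bad-volume} over this bounded slab, produces a level $\rho_\xi$. Its moving boundary $\Sigma_\tau=\{\rRm=\rho_\xi\}$ has $\int_1^4\Area(\Sigma_\tau)\,d\tau\le C\xi^{3-\eps}$. For a bad curve, let $\tau_*$ be the last time it lies on $\Sigma_{\tau_*}$; by Lemma~\ref{lem:initial-cap}, $\tau_*\ge1+\theta$. The set of bad initial velocities $v$ is covered by the images under $\Xi^{-1}$ of these last-contact points. Its endpoint volume $\int J_i(v,4)\,dv$ is bounded using Proposition~\ref{prop:jacobian-comparison}: replace $J_i(v,4)$ by $C_JJ_i(v,\tau_*)$, and then change variables from $v$ to the last-contact point in $\bigcup_\tau\Sigma_\tau\times\{\tau\}$. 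The Jacobian of this hypersurface change of variables is controlled by the transversal relative speed $|\partial_\tau+V-\text{normal velocity of }\Sigma|$. This gives
\[
 \Vol(\text{bad})\le C\int_1^4\int_{\Sigma_\tau}(1+|V|)\,d\sigma\,d\tau.
\]

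\textbf{The main obstacle} is the uniform relative-speed bound, since a priori $|V|$ is only in $L^2$. We first try to get a pointwise bound on $|V_i|=|\nabla\ell_i|$ from the projected energy. The projected curves have $E_g$ bounded, and for a minimizer, \eqref{eq:action-geodesic} together with the regular geometry near $\Sigma$ (where $\rRm\approx\rho_\xi$) bounds $|V|$ by $C\rho_\xi^{-1}$ only. The intended route is instead the conservation law for a minimizer: the projected curve is an energy minimizer with endpoint terms, so $|\dot\gamma|_g$ is constant up to $o(1)$ as $i\to\infty$. With $E_g\le C$, this gives $|V|\le C$ pointwise along the curve. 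We also need the normal velocity of $\Sigma$ in soliton coordinates to be bounded; in those coordinates $\Sigma$ is static up to scaling, so its normal velocity is at most $C$ times the drift, controlled by \eqref{eq:slab-potential}. This yields the bad volume bound $C\xi^{3-\eps}$ in $M_i$, uniformly in $i$.

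Finally, we pass to the limit. Set $D_\xi=\{2\sqrt f\le L+1\}\cap\{\rRm\ge\xi\}$, slightly shrunk and compact in $\Reg'_{-1}$. It contains $K\cup\{x\}$ in its interior once $\xi_0<\tfrac12\min_{K\cup\{x\}}\rRm$. Good curves stay in a compact regular tube with $\rRm\ge\rho_\xi\ge\xi$. Arzelà--Ascoli, together with smooth convergence there, gives limit $\mathcal L$-geodesics whose action equals $B(x,y)+d(x,y)^2$. By \eqref{eq:action-conjugacy}, their projections are minimizing $g$-geodesics in $D_\xi$. Volumes converge because bad sets are closed under limits (upper semicontinuity), so $\Vol_g(K\setminus G_x(D_\xi))\le\liminf\le C\xi^{3-\eps}$.
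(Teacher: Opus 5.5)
Your outline has the right architecture: last entry, Proposition~\ref{prop:jacobian-comparison}, a flux change of variables, near-conservation of the projected energy, then compactness and Fatou. The gap is in localization, which rests on a claim you cannot justify. You fix $L$ by asserting, via \eqref{eq:action-conjugacy} and \eqref{eq:linear-drift}, that every approximating curve with action at most $A_0$ has soliton projection in $\{2\sqrt f\le L\}$. But the curves $\beta_i$ live on $M_i$, and a bad curve by definition passes through regions where $\phi_i$, hence $\Psi_i$, is not defined. Equation \eqref{eq:action-conjugacy} is an identity for regular curves of the limit, and there is no projected curve on all of $[0,1]$ to apply it to. So nothing you have said prevents a curve from leaving your tube, wandering outside the convergence domain, and re-entering through the outer face $\{2\sqrt f=L+1\}$. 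In that case the last contact with $\Sigma$ is not the last entry into the controlled region. Your assertion that good curves stay in a compact regular tube is likewise unsupported.

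The speed bound has the same defect. Only the terminal segment after the last entry can be projected. An almost conserved speed together with $\int_{\bar s}^1|\dot\gamma_i|_g^2\dd s\le E_1$ gives only $|\dot\gamma_i|_g^2\le CE_1/(1-\bar s)$. This is useless unless $1-\bar s$ is bounded below independently of $\xi$ and $i$.

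The missing ingredient is the terminal localization of Lemma~\ref{lem:last-entry-localization}:
\begin{enumerate}
\item Call a curve bad if its graph leaves $\mathcal T_i(\Omega_\xi)$, so that good curves are projectable by definition.
\item Take the last contact with the whole boundary, inner and outer faces, and project only the terminal segment.
\item On that segment $|z_i'|_g\le C|V_i|+C(1+r(z_i))$. The energy bound \eqref{eq:physical-energy} comes from the scalar lower bound \eqref{eq:scalar-minus-one}, not from any projection. The drift constant is independent of $\xi$ and $L$ by \eqref{eq:linear-drift}.
\item Backward Gr\"onwall from $y\in K$ gives $r\le R_*$ with $R_*$ independent of $L$. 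So $L$ can be chosen afterwards to exclude outer-face contacts.
\item The same speed estimate gives $4-\bar\tau\ge\delta_b$, which is exactly what Lemma~\ref{lem:relative-speed} needs.
\end{enumerate}

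A secondary problem is your choice $\Sigma_\tau=\{\rRm=\rho_\xi\}$. It is a level set of a merely Lipschitz function, so it is not a smooth hypersurface for the flux formula. It is also not static in soliton coordinates. Since $\boldsymbol\varphi_\tau$ is a parabolic homothety, $\rRm\circ\boldsymbol\varphi_\tau=\sqrt\tau\,\rRm$ at these scales. Pulled back to the reference slice, $\Sigma_\tau$ is therefore the moving level set $\{\rRm=\rho_\xi/\sqrt\tau\}$, whose normal speed is not controlled by the drift. The paper instead takes a regular level $\{\eta_\xi=c_\xi\}$ of the smooth cutoff on the reference slice, with area bound from \eqref{eq:slice-bad-volume}, and transports it by $\Psi_i$. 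Then $V_i-U_i=(\mathrm d_z\Psi_i)z_i'$, so the relative normal speed in Lemma~\ref{lem:flux} is bounded directly by the projected speed. This choice also makes $D_\xi\subset\{\rRm\ge\xi\}$ hold as stated.
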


\subsection{Domains with small inner boundary}

Put $r(z)=d(p,z)$ and
\begin{equation}
 \rho=2\sqrt f,\qquad
 |\nabla\rho|^2=1-\frac{\mathrm R}{f}\le1.
 \label{eq:potential-radius}
\end{equation}
The function $\rho$ is smooth on $\Reg'_{-1}$. The coarse bounds
\eqref{eq:quadratic-potential} imply
$\{\rho\le L+2\}\subset B_g(p,C(L+3))$ for $L\ge1$.
Let $L$ be a large number to be fixed below, independently of $\xi$,
and let $\eta_\xi$ be the restriction to $\Reg'_{-1}$ of the cutoff
in Lemma~\ref{lem:cutoff-package}. By \eqref{eq:slice-bad-volume},
coarea gives
\begin{align*}
 \int_{1/3}^{2/3}\Area_g\bigl(\{\eta_\xi=c\}\cap\{\rho<L+2\}\bigr)\dd c
 &\le\int_{\{\rho<L+2\}\cap\{0<\eta_\xi<1\}}
                    |\nabla\eta_\xi|\dd V_g\\
 &\le C_{L,\eps}\xi^{3-\eps}.
\end{align*}
Choose a regular value $c_\xi\in(1/3,2/3)$ with
\begin{equation}
 \Sigma_\xi=\{\eta_\xi=c_\xi\},\qquad
 \Area_g\bigl(\Sigma_\xi\cap\{\rho<L+2\}\bigr)
          \le C_{L,\eps}\xi^{3-\eps}.
 \label{eq:small-area}
\end{equation}
Choose $L_\xi\in(L,L+1)$ which is a regular value both of $\rho$
and of its restriction to $\Sigma_\xi$. Set
\[
 \Omega_\xi=\{\eta_\xi>c_\xi\}\cap\{\rho<L_\xi\},\qquad
 D_\xi=\overline{\Omega_\xi}^{\Reg'_{-1}}.
\]
The regular cutoff compactness makes $D_\xi$ compact in $\Reg'_{-1}$,
and it is contained in $\{\rho\le L+1\}\cap\{\rRm\ge\xi\}$.
In particular, $D_\xi\subset B_g(p,C(L+2))$.
There are two boundary faces,
\[
 \partial_{\rm in}\Omega_\xi=\Sigma_\xi\cap\{\rho\le L_\xi\},\qquad
 \partial_{\rm out}\Omega_\xi=\{\rho=L_\xi\}\cap\{\eta_\xi\ge c_\xi\}.
\]
Only the inner face has small area. We will exclude the outer face
as a possible last-entry location.

\subsection{Moving coordinates and a uniform length bound}

Let $\phi_i$ be the smooth spacetime convergence maps from
Section~\ref{subsec:tangent-background}, and put
\[
 \Psi_i(z,\tau)=\phi_i(\boldsymbol{\varphi}_\tau(z)).
\]
We regard $\Psi_i$ as its spatial component in the fixed manifold
$M_i$, with time coordinate $-\tau$. Define
\[
 \begin{split}
 h_i(\tau)&=\Psi_i(\cdot,\tau)^*g_i(-\tau),\\
 w_i&=(\mathrm d_z\Psi_i)^{-1}\partial_\tau\Psi_i,\qquad
 \mathrm{R}_i=\mathrm{R}_{g_i(-\tau)}\circ\Psi_i.
 \end{split}
\]
Convergence of the time vector fields, as well as the metrics,
gives
\begin{equation}
 h_i\longrightarrow\tau g,\qquad
 w_i\longrightarrow\tau^{-1}\nabla f,\qquad
 \mathrm{R}_i\longrightarrow\tau^{-1}\mathrm{R}_g
 \label{eq:moving-convergence}
\end{equation}
smoothly on $A\times[1,4]$ for every compact
$A\Subset\Reg'_{-1}$. For every fixed $\xi$
and sufficiently large $i$ we may demand
\begin{equation}
 \frac{1}{2}\tau g\le h_i\le2\tau g,\qquad
 |w_i|_g\le C(1+r)
 \label{eq:moving-comparison}
\end{equation}
on a neighborhood of $D_\xi\times[1,4]$. The constant $C$ in the
drift estimate is independent of $\xi$ and $L$, by
\eqref{eq:linear-drift}; only the required index can depend on them.
Set
\[
 U_i=\partial_\tau\Psi_i=(\mathrm d_z\Psi_i)w_i.
\]
For a curve $\beta_i(\tau)=\Psi_i(z_i(\tau),\tau)$,
\begin{equation}
 V_i=U_i+(\mathrm d_z\Psi_i)z_i',\qquad
 |V_i|_{g_i(-\tau)}=|z_i'+w_i|_{h_i}.
 \label{eq:relative-velocity}
\end{equation}

There is a uniform bound $A_0$ for the minimum $\mathcal L$-lengths from
$x_i=\Psi_i(x,1)$ to $y_i=\Psi_i(y,4)$, $y\in K$.
To construct competitors, cover $K$ by finitely many relatively
compact normal neighborhoods. Fix a regular path from $x$ to the
center of each neighborhood, append a short local path to $y$,
and parametrize the pieces on fixed subintervals of $[0,1]$.
These curves have uniformly bounded energy and lie in a fixed
compact regular set. Their soliton lifts and their images under
$\phi_i$ have uniformly bounded $\mathcal L$-lengths. Global minimizing
$\mathcal L$-geodesics
on the closed approximating Ricci flows have no greater $\mathcal L$-length. In particular,
\eqref{eq:physical-energy} applies with fixed $A_0,E_0$.

The fixed endpoint neighborhoods can be included in $\Omega_\xi$
by first choosing $L$ large and then $\xi$ small. Their images under
$\Psi_i$ contain the initial regular cap of
Lemma~\ref{lem:initial-cap}.

\subsection{Localization after the last entry}

For $A\subset\Reg'_{-1}$, define its moving domain in backward-time
coordinates by
\[
 \mathcal T_i(A)
   =\{(\Psi_i(z,\tau),\tau):z\in A,\ 1\le\tau\le4\}.
\]
A minimizing $\mathcal L$-geodesic from $x_i$ to
$\Psi_i(y,4)$, $y\in K$, is called \emph{bad} if its spacetime graph
is not contained in $\mathcal T_i(\Omega_\xi)$. Since its endpoints
lie in the interior spatial sections, a bad curve has a last
boundary contact at some $\bar\tau\in(1,4)$. For
$\bar\tau<\tau\le4$, it remains in $\mathcal T_i(\Omega_\xi)$,
and we write it in the coordinates $z_i(\tau)$.

We follow the localization argument in
\cite[proof of Lemma~3.22, equations~(3.84)--(3.89)]{ChenWangB},
using moving soliton coordinates.

\begin{lemma}[Terminal localization and endpoint separation]
\label{lem:last-entry-localization}
There are $R_*,\delta_a,\delta_b>0$, independent of $\xi$, such that
for sufficiently large $i$ after $\xi$ is fixed,
\begin{equation}
 r(z_i(\tau))\le R_*\quad(\bar\tau\le\tau\le4),\qquad
 1+\delta_a\le\bar\tau\le4-\delta_b.
 \label{eq:last-entry-localization}
\end{equation}
The outer radius $L$ can be fixed so that a last contact on
$\partial_{\rm out}\Omega_\xi$ is impossible. The initial
$\theta$ in \eqref{eq:jacobian-comparison} can be chosen with
$\theta<\delta_a$.
\end{lemma}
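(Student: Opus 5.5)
Three bounds are needed: a radial bound on the terminal portion, a lower bound on $\bar\tau$, and an upper bound on $\bar\tau$. All come from the uniform action bound \eqref{eq:uniform-action-bound}, its energy consequence \eqref{eq:physical-energy}, and the moving-coordinate comparison \eqref{eq:moving-comparison}, \eqref{eq:relative-velocity}.

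\emph{Radial bound.} On $(\bar\tau,4]$ the curve lies in $\mathcal T_i(\Omega_\xi)$, so \eqref{eq:moving-comparison} applies with constants independent of $\xi$ and $L$. Since $|z_i'+w_i|_{h_i}=|V_i|$ and $h_i\ge\tau g/2$, we get $|z_i'|_g\le C|V_i|+|w_i|_g\le C|V_i|+C(1+r)$. Integrating backward from the endpoint $y\in K$, where $r(y)\le R_K$, and applying Gronwall to $1+r(z_i(\tau))$ with $\int_1^4|V_i|\dd\tau\le C\sqrt{E_0}$, yields
\[
 r(z_i(\tau))\le R_*:=C(1+R_K+\sqrt{E_0})
 \qquad\text{on }[\bar\tau,4].
\]
This holds first for $\tau>\bar\tau$ and then at $\bar\tau$ by continuity. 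Note that $R_*$ depends only on $K$, $A_0$, and the drift constant, not on $L$ or $\xi$. Now choose $L$ so large that $\{\rho\ge L\}\cap B_g(p,R_*)=\emptyset$, using the lower bound in \eqref{eq:quadratic-potential}. Then the last contact, which has $r\le R_*$, cannot lie on $\partial_{\rm out}\Omega_\xi$, where $\rho=L_\xi>L$. So it lies on the inner face.

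\emph{Lower bound on $\bar\tau$.} By Lemma~\ref{lem:initial-cap}, on $[1,1+h]$ every minimizing curve stays in the fixed initial regular cap. The images of the fixed endpoint neighborhoods under $\Psi_i$ contain this cap, and these neighborhoods were included in $\Omega_\xi$ with positive distance from $\partial\Omega_\xi$. Hence there is no boundary contact on $[1,1+h]$, and $\delta_a=h$ works. Here I would shrink $h$ slightly so that the cap sits strictly inside the neighborhood. Since Lemma~\ref{lem:initial-cap} allows any $\theta\in(0,h)$, we can take $\theta<\delta_a$.

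\emph{Upper bound on $\bar\tau$.} This is the symmetric terminal argument and the step I expect to be the main obstacle, since the curve near $\tau=4$ is only known to hit $y$, which ranges over $K$. Fix a compact neighborhood $K'\Subset\Omega_\xi$ of $K$ with $d_g(K,\partial K')\ge\kappa>0$, chosen before $\xi$ along with the endpoint neighborhoods. Suppose the curve leaves $K'$ at some time in $[4-\delta,4]$. Then $\int_{4-\delta}^4|z_i'|_g\dd\tau\ge\kappa$. On the other hand, by the radial bound and Cauchy--Schwarz, this integral is at most $C\sqrt{\delta E_0}+C(1+R_*)\delta$. Choosing $\delta_b$ with $C\sqrt{\delta_bE_0}+C(1+R_*)\delta_b<\kappa$ forces the curve to stay in $K'$ on $[4-\delta_b,4]$, so the last contact satisfies $\bar\tau\le4-\delta_b$. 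The subtle point is that this estimate may only be used while the curve is inside $\mathcal T_i(\Omega_\xi)$, where the coordinates $z_i$ are defined. That holds on the whole terminal interval $(\bar\tau,4]$ by the definition of last contact. All of the constants $\delta_a$, $\delta_b$, $R_*$ are determined before $\xi$, so only the index threshold $i$ depends on $\xi$.
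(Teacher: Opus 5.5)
Your proposal is correct and follows the paper's proof essentially step for step: the speed bound $|z_i'|_g\le C|V_i|+C(1+r)$ with backward Gr\"onwall gives $R_*$, the initial regular cap gives $\delta_a$ (with $\theta<\delta_a$), and the travel-distance argument from the boundary of a fixed neighborhood of $K$ to $y$ gives $\delta_b$. One small slip: to get $\rho<L$ on $B_g(p,R_*)$ you need the \emph{upper} bound in \eqref{eq:quadratic-potential}, not the lower bound, or more simply $\rho\le\rho(p)+r$ from $|\nabla\rho|\le1$, which is what the paper uses.
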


\begin{proof}
By \eqref{eq:moving-comparison} and
\eqref{eq:relative-velocity},
\begin{equation}
 |z_i'|_g\le C|V_i|_{g_i(-\tau)}+C(1+r(z_i)).
 \label{eq:coordinate-speed}
\end{equation}
The function $r$ is intrinsically $1$-Lipschitz. Integrating backward
from $z_i(4)=y\in K$ gives
\[
 r(z_i(\tau))\le\sup_Kr+C\int_\tau^4|V_i|\dd u
                         +C\int_\tau^4(1+r(z_i(u)))\dd u.
\]
The integral of $|V_i|$ is at most $\sqrt{3E_0}$.
Backward Gr\"onwall yields, for example,
\[
 R_*=e^{3C}\bigl(\sup_Kr+C\sqrt{3E_0}+3C\bigr).
\]
This constant is independent of the outer radius. Now choose
$L>\rho(p)+R_*+1$, increasing it to contain the fixed comparison
paths and endpoint neighborhoods in $\{\rho<L\}$.
Since \eqref{eq:potential-radius} gives $\rho\le\rho(p)+r$,
an outer-face contact would imply
$L< L_\xi=\rho\le\rho(p)+R_*$, a contradiction.
This also excludes the corner where the two faces meet.

The initial regular cap remains inside the moving domain for a
fixed interval. It gives the lower bound
$\bar\tau\ge1+\delta_a$ and allows $\theta<\delta_a$.
Choose $\bar d>0$ so that the intrinsic $\bar d$-neighborhood of $K$
has compact closure in $\Reg'_{-1}$ and is contained in $\Omega_\xi$
for all sufficiently small $\xi$. The last-contact point is at
distance at least $\bar d$ from $K$. On the terminal segment,
\eqref{eq:coordinate-speed} and the already proved radial bound give
\[
 \bar d\le\int_{\bar\tau}^4|z_i'|_g\dd\tau
       \le C\sqrt{E_0(4-\bar\tau)}+C(4-\bar\tau).
\]
The right side is smaller than $\bar d$ if $4-\bar\tau$ is
sufficiently small. This proves the fixed terminal separation.
\end{proof}

For $s$ as in \eqref{eq:one-four}, put
$\bar s=s(\bar\tau)$ and $\gamma_i(s)=z_i(\tau(s))$ on the
terminal segment. Equations~\eqref{eq:physical-energy} and
\eqref{eq:coordinate-speed} imply
\begin{equation}
 \int_{\bar s}^1|\dot\gamma_i|_g^2\dd s\le E_1,
 \qquad 1-\bar s\ge\frac{\delta_b}{8},
 \label{eq:terminal-integral-energy}
\end{equation}
where $E_1$ is independent of $\xi$. Indeed,
$\int|\dot\gamma_i|^2\dd s=\int\tau^{3/2}|z_i'|^2\dd\tau$,
and $\tau^{-3/2}\ge1/8$ on $[1,4]$.

\subsection{Uniform speed of projected curves}

We first express the $\mathcal L$-length as a functional of the
projected curve. For $0\le s_0<s_1\le1$, put $\tau_j=\tau(s_j)$
and $\beta_i(\tau)=\Psi_i(\gamma(s(\tau)),\tau)$. Write
$\Act_{i;\tau_0,\tau_1}$ for the $\mathcal L$-length on
$[\tau_0,\tau_1]$ with metric $g_i(-\tau)$, and define
\[
 \begin{split}
 \widehat{\mathcal A}_{i;s_0,s_1}(\gamma)
  &:=\Act_{i;\tau_0,\tau_1}(\beta_i)
       -\left[2\sqrt{\tau(s)}f(\gamma(s))\right]_{s_0}^{s_1}\\
  &=\int_{s_0}^{s_1}\widehat L_i(\gamma,\dot\gamma,s)\dd s.
 \end{split}
\]
In particular, for $\gamma(0)=x$ and $\gamma(1)=y$,
\[
 \widehat{\mathcal A}_{i;0,1}(\gamma)=\Act_i(\beta_i)-B(x,y).
\]
The endpoint term is fixed under variations with fixed endpoints.
By \eqref{eq:relative-velocity} and $\mathrm d\tau/\mathrm ds=\tau^{3/2}$,
the integrand is
\[
 \widehat L_i(z,v,s)=a_i(z,s)(v,v)+2b_i(z,s)(v)+c_i(z,s),
\]
with
\begin{equation}
 \begin{aligned}
 a_i&=\tau^{-1}h_i,\\
 b_i&=\sqrt\tau\bigl(h_i(w_i,\cdot)-\mathrm df\bigr),\\
 c_i&=\tau^2\bigl(\mathrm{R}_i+h_i(w_i,w_i)\bigr)-\tau f.
 \end{aligned}
 \label{eq:lagrangian-coefficients}
\end{equation}
For example, before subtracting the total derivative, the cross term
is $2\sqrt\tau h_i(w_i,v)$, whereas
\[
 \frac{\mathrm d}{\mathrm ds}(2\sqrt\tau f)=\tau f
                                  +2\sqrt\tau\,\mathrm df(v).
\]
By \eqref{eq:moving-convergence},
\begin{equation}
 a_i\longrightarrow g,\qquad b_i\longrightarrow0,\qquad
 c_i\longrightarrow0
 \label{eq:lagrangian-convergence}
\end{equation}
smoothly on $A\times[0,1]$ for each compact
$A\Subset\Reg'_{-1}$. For fixed $\xi$ there
are $\epsilon_{i,\xi}\to0$ such that
\begin{equation}
 \begin{gathered}
 \frac{1}{2}g\le a_i\le2g,\qquad |c_i|\le\epsilon_{i,\xi},\\
 |\partial_sa_i|_g+|\partial_sb_i|_g+|\partial_sc_i|
                           \le\epsilon_{i,\xi}.
 \end{gathered}
 \label{eq:coefficient-time-control}
\end{equation}
All norms here are taken with respect to $g$.

\begin{lemma}[Uniform relative speed]
\label{lem:relative-speed}
There is $C_V<\infty$, independent of $\xi$, such that every bad
minimizing curve satisfies
\begin{equation}
 |\dot\gamma_i|_g\le C_V\quad(\bar s\le s\le1),\qquad
 |V_i-U_i|_{g_i(-\bar\tau)}\le C_V'
 \label{eq:relative-speed}
\end{equation}
at the last-entry point, for all sufficiently large $i$ after
$\xi$ is fixed.
\end{lemma}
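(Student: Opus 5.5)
The plan is to treat the terminal segment $\gamma_i|_{[\bar s,1]}$ as a minimizer of the reduced functional $\widehat{\mathcal A}_{i;\bar s,1}$ with fixed endpoints. It is a minimizer because every subsegment of a minimizing $\mathcal L$-geodesic is minimizing, and the subtracted endpoint term depends only on the endpoints. On this segment the curve lies in $\Omega_\xi\subset D_\xi$, where the coefficient bounds \eqref{eq:coefficient-time-control} hold. The Lagrangian $\widehat L_i=a_i(v,v)+2b_i(v)+c_i$ is then a small perturbation of $|v|_g^2$. Its Euler--Lagrange equation gives an approximate conservation law for the associated Hamiltonian ("energy")
\[
 \mathcal H_i=a_i(\dot\gamma_i,\dot\gamma_i)-c_i .
\]
Differentiating along a solution gives $\frac{\mathrm d}{\mathrm ds}\mathcal H_i=\partial_s a_i(\dot\gamma,\dot\gamma)+2\partial_s b_i(\dot\gamma)-\partial_s c_i$. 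The explicit $s$-dependence is exactly what breaks conservation, and by \eqref{eq:coefficient-time-control} it is bounded by $\epsilon_{i,\xi}(1+|\dot\gamma_i|_g^2)$.

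First, I would run a Gr\"onwall argument on $\mathcal H_i$ over $[\bar s,1]$, an interval of length at most one. This gives
\[
 \sup\mathcal H_i\le e^{C\epsilon_{i,\xi}}\bigl(\inf\mathcal H_i+C\epsilon_{i,\xi}\bigr),
\]
that is, the speed is nearly constant along the segment. Second, I would combine this with the integral energy bound \eqref{eq:terminal-integral-energy}. Since $1-\bar s\ge\delta_b/8$, there is some $s_*\in[\bar s,1]$ with $|\dot\gamma_i(s_*)|_g^2\le 8E_1/\delta_b$. Using $\tfrac12 g\le a_i\le2g$ and $|c_i|\le\epsilon_{i,\xi}$, the near-conservation then yields $|\dot\gamma_i|_g^2\le C(E_1/\delta_b+1)$ throughout. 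This constant $C_V$ depends only on $E_1$ and $\delta_b$, which are independent of $\xi$ by Lemma~\ref{lem:last-entry-localization} and \eqref{eq:terminal-integral-energy}. The only requirement is that $i$ be large enough for $\epsilon_{i,\xi}\le1$.

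Third, I would convert this into the relative-velocity bound at $\bar\tau$. By \eqref{eq:relative-velocity}, $V_i-U_i=(\mathrm d_z\Psi_i)z_i'$, so
\[
 |V_i-U_i|_{g_i(-\bar\tau)}=|z_i'|_{h_i}\le\sqrt{2\bar\tau}\,|z_i'|_g .
\]
Moreover $z_i'=\tau^{-3/2}\dot\gamma_i$. Since $\bar\tau\in[1,4]$, we obtain $|V_i-U_i|\le C C_V=:C_V'$.

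The main obstacle is justifying the Euler--Lagrange and Hamiltonian computation up to and including the last-entry point $\bar s$, where the curve touches $\partial_{\rm in}\Omega_\xi$. I would handle this by noting that the coefficient bounds hold on a neighborhood of $D_\xi\times[1,4]$ by \eqref{eq:moving-comparison}. The minimizing $\mathcal L$-geodesic is smooth on the closed flow, so its projection is smooth there. The identity for $\frac{\mathrm d}{\mathrm ds}\mathcal H_i$ is then a pointwise consequence of the $\mathcal L$-geodesic equation \eqref{eq:action-geodesic}, rewritten in moving coordinates, and it extends by continuity to $\bar s$. A second subtlety is that \eqref{eq:coefficient-time-control} must control the $s$-derivative of $b_i$ as a one-form and not just $b_i$ itself. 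This is needed because the magnetic term $\mathrm d b_i$ does no work and drops out of $\frac{\mathrm d}{\mathrm ds}\mathcal H_i$, while $\partial_s b_i$ does not.
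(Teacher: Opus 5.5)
Your proposal is correct and follows essentially the same route as the paper: near-conservation of the Hamiltonian $a_i(v,v)-c_i$ of the reduced Lagrangian, combined with the terminal energy bound \eqref{eq:terminal-integral-energy} on an interval of length at least $\delta_b/8$, and then the conversion through $h_i\le2\tau g$ and $z_i'=\tau^{-3/2}\dot\gamma_i$. The paper adds $2$ to the Hamiltonian to make it positive and integrates rather than picking a good time $s_*$. One harmless slip: the correct identity is $\frac{\mathrm d}{\mathrm ds}\mathcal H_i=-(\partial_sa_i)(\dot\gamma_i,\dot\gamma_i)-2(\partial_sb_i)(\dot\gamma_i)-\partial_sc_i$, so your signs on the first two terms are reversed, which does not matter because only the absolute value enters the Gr\"onwall step.
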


\begin{proof}
The terminal segment minimizes
$\widehat{\mathcal A}_{i;\bar s,1}$ with its endpoints fixed and
therefore satisfies the Euler--Lagrange equation. For a tangent vector $w$,
\[
 (\partial_v\widehat L_i)[w]=2a_i(v,w)+2b_i(w).
\]
Define the positive energy
\[
 e_i=a_i(v,v)-c_i+2
       =(\partial_v\widehat L_i)[v]-\widehat L_i+2,
 \qquad v=\dot\gamma_i.
\]
The linear term cancels. Along an Euler--Lagrange curve,
\[
 \frac{\mathrm d}{\mathrm ds}
       \bigl(v^\alpha L_{v^\alpha}-L\bigr)
 =v^\alpha\left(\frac{\mathrm d}{\mathrm ds}L_{v^\alpha}
                              -L_{z^\alpha}\right)-L_s
 =-\partial_sL.
\]
Applying this identity to $\widehat L_i$ gives
\[
 e_i'=-(\partial_sa_i)(v,v)-2(\partial_sb_i)(v)-\partial_sc_i,
\]
where $\partial_s$ is taken at fixed $(z,v)$.
For $i$ large enough that $\epsilon_{i,\xi}\le1$,
\eqref{eq:coefficient-time-control} yields
\[
 1+\frac{1}{2}|v|^2\le e_i\le3+2|v|^2,
 \qquad |e_i'|\le C\epsilon_{i,\xi}e_i\le Ce_i.
\]
For any $s,t$ on the terminal segment, $e_i(s)\le e^Ce_i(t)$.
Integrating in $t$ and using
\eqref{eq:terminal-integral-energy},
\[
 e_i(s)\le\frac{e^C}{1-\bar s}\int_{\bar s}^1e_i(t)\dd t
       \le\frac{8e^C}{\delta_b}(3+2E_1).
\]
This proves the first bound. At the contact point,
\[
 |V_i-U_i|=|z_i'|_{h_i}
       \le\sqrt{2\tau}\,\tau^{-3/2}|\dot\gamma_i|_g
       \le\sqrt2 C_V.
\]
All quantities extend continuously to the contact point in the
slightly larger regular coordinate domain.
\end{proof}

\subsection{The moving-boundary area formula}

Let $(M,g(\tau))$, $1\le\tau\le4$, be a smooth closed backward
Ricci flow satisfying $\partial_\tau g=2\Ric$. On the normal region
\eqref{eq:normal-map}, define the endpoint projection
\[
 P(E_\tau(v),\tau)=E_4(v).
\]
Let $\iota_\tau:N\to M$ be a smooth family of hypersurface
embeddings and set
\[
 S_\tau=\iota_\tau(N),\qquad
 U(\iota_\tau(q),\tau)=\partial_\tau\iota_\tau(q),\qquad
 \mathcal S=\{(\iota_\tau(q),\tau):q\in N,\ 1<\tau<4\}.
\]
Write $\nu$ for a local unit normal to $S_\tau$ and $\dd\sigma_\tau$
for its area density. Equip the spacetime track $\mathcal S$ with
the density $\dd\sigma_\tau\dd\tau$. The Jacobians below are relative
to this density and the endpoint volume density $\dV_{g(4)}$.

\begin{lemma}[Projection Jacobian]
\label{lem:flux}
On the part of this track contained in the normal region, the
absolute $n$-dimensional Jacobian of the restricted projection is
\begin{equation}
 \Jac P=\frac{J(v,4)}{J(v,\tau)}
                     |\langle V-U,\nu\rangle_{g(\tau)}|.
 \label{eq:flux-jacobian}
\end{equation}
Consequently, for a measurable subset $H$ of the track contained
in the normal region,
\begin{equation}
 \Vol_{g(4)}(P(H))
 \le\int_H\frac{J(v,4)}{J(v,\tau)}
              |\langle V-U,\nu\rangle|\dd\sigma_\tau\dd\tau.
 \label{eq:flux-inequality}
\end{equation}
\end{lemma}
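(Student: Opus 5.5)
The plan is to factor $P$ restricted to the track as a composition of two maps, each with a computable Jacobian. The first map is the flow-out of the track under the $\mathcal L$-exponential coordinates, which moves a point back to initial-velocity space. The second is $E_4$ itself.

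Concretely, on the normal region we use the diffeomorphism $\Xi$ of \eqref{eq:normal-map}. Put
\[
 Q=\operatorname{pr}_1\circ\Xi^{-1}:(E_\tau(v),\tau)\mapsto v\in\mathcal U_4 ,
\]
so that $P=E_4\circ Q$. Since $E_4^*\dV_{g(4)}=J(v,4)\dd v$ by \eqref{eq:action-jacobian}, we get $\Jac P=J(v,4)\,\Jac(Q|_{\mathcal S})$, where $\Jac(Q|_{\mathcal S})$ is measured from $\dd\sigma_\tau\dd\tau$ to $\dd v$. The task therefore reduces to showing
\[
 \Jac(Q|_{\mathcal S})=\frac{|\langle V-U,\nu\rangle|}{J(v,\tau)} .
\]

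To compute this, fix a track point $(m,\tau_0)$ with $m=\iota_{\tau_0}(q)$. We write $\Xi$ in the spacetime volume $\dV_{g(\tau)}\dd\tau$. Then $\Xi^*(\dV_{g(\tau)}\dd\tau)=J(v,\tau)\dd v\dd\tau$, and the $\tau$-curves of $\Xi$ are the $\mathcal L$-geodesics, whose spacetime velocity is $(V,1)$. Hence $\operatorname{pr}_1\circ\Xi^{-1}$ has kernel spanned by $(V,1)$. On any hypersurface transverse to $(V,1)$, its Jacobian from the induced $n$-density to $\dd v$ is $J^{-1}$ times the spacetime volume of the parallelepiped spanned by $(V,1)$ and the hypersurface's unit $n$-volume, i.e. $J^{-1}\cdot|\dV\wedge\dd\tau\,((V,1),e_1,\dots,e_n)|$.

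The tangent space of $\mathcal S$ at the point is spanned by $(e_1,0),\dots,(e_{n-1},0)$ and $(U,1)$, where the $e_a$ are $g(\tau_0)$-orthonormal in $T_mS_{\tau_0}$. Its density $\dd\sigma_\tau\dd\tau$ assigns unit mass to this frame. Evaluating the $(n+1)$-form $\dV_{g(\tau)}\wedge\dd\tau$ on $(V,1),(e_1,0),\dots,(e_{n-1},0),(U,1)$, we subtract the last column from the first. This gives $(V-U,0)$, and the determinant reduces to $|\dV_{g(\tau)}(V-U,e_1,\dots,e_{n-1})|=|\langle V-U,\nu\rangle|$. This yields \eqref{eq:flux-jacobian}. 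Where $\langle V-U,\nu\rangle=0$ the formula still holds, both sides being zero.

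For \eqref{eq:flux-inequality}, we apply the area formula to the Lipschitz (indeed smooth) map $P|_{H}$ from the $n$-dimensional track with density $\dd\sigma_\tau\dd\tau$ into $(M,\dV_{g(4)})$:
\[
 \Vol_{g(4)}(P(H))\le\int_{P(H)}\#(P|_H^{-1}(y))\dd V=\int_H\Jac P\,\dd\sigma_\tau\dd\tau .
\]
The only subtle point is bookkeeping the density on $\mathcal S$. We must check that $\dd\sigma_\tau\dd\tau$ is indeed the measure for which the frame $\{(e_a,0),(U,1)\}$ has unit volume. This holds because the parametrization $(q,\tau)\mapsto(\iota_\tau(q),\tau)$ pulls the measure back to $\dd\sigma_\tau\dd\tau$ by definition. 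Shearing by $U$ does not change the spacetime determinant, so the $U$-component of the normal frame is irrelevant. No global injectivity of $P|_H$ is needed, because we only claim an inequality.
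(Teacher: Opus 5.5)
Your proposal is correct and is essentially the paper's argument. Your claim is that $Q=\operatorname{pr}_1\circ\Xi^{-1}$, restricted to a hypersurface transverse to $\partial_\tau+V$, has Jacobian $J(v,\tau)^{-1}$ times the $\dV_{g(\tau)}\wedge\mathrm d\tau$-volume of the parallelepiped spanned by $\partial_\tau+V$ and a unit frame; this is exactly the paper's identity $\Xi^*\bigl(\iota_{\partial_\tau+V}(\omega_\tau\wedge\mathrm d\tau)\bigr)=\pm J(v,\tau)\dd v$, combined with $E_4^*\omega_4=J(v,4)\dd v$, and your column subtraction $(\partial_\tau+V)-(\partial_\tau+U)=V-U$ on the frame $e_1,\ldots,e_{n-1},\partial_\tau+U$ and the area formula with multiplicity are likewise the same as in the paper.
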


\begin{proof}
Work in oriented local charts and let $\omega_\tau$ be the spatial
volume form, extended to spacetime by annihilating $\partial_\tau$.
Put
\[
 \Omega=\omega_\tau\wedge\mathrm d\tau,\qquad
 \mathcal X=\partial_\tau+V,\qquad
 \alpha=\iota_{\mathcal X}\Omega.
\]
Since $P\circ\Xi(v,\tau)=E_4(v)$, we have
$\mathrm dP(\mathcal X)=0$. Moreover,
$\mathrm d\Xi(\partial_\tau)=\mathcal X$ and
$\alpha(\mathcal X,\cdot)=0$. Choose a fixed orthonormal basis
$e_a$ in the initial-velocity space. Evaluating on the spatial frame
$Y_a=\mathrm dE_\tau(e_a)$ therefore gives the full pullback identity
\[
 \Xi^*\alpha=\pm J(v,\tau)\dd v,
 \qquad
 \Xi^*(P^*\omega_4)=E_4^*\omega_4=J(v,4)\dd v.
\]
Here local orientations can be chosen consistently; the sign of the
first identity will disappear on taking absolute densities.
Since $\Xi$ is a diffeomorphism, comparison of these identities yields
\begin{equation}
 P^*\omega_4
   =\pm\frac{J(v,4)}{J(v,\tau)}
       \iota_{\partial_\tau+V}
                   (\omega_\tau\wedge\mathrm d\tau)
 \label{eq:flux-form-identity}
\end{equation}
on the normal region, with $v$ understood through $\Xi^{-1}$.

At a point of the moving track, choose a spatial orthonormal
tangential frame $e_1,\ldots,e_{n-1}$ and let
$T=\partial_\tau+U$ be its time tangent. Since
$\mathcal X=T+(V-U)$, alternation gives
\begin{align*}
 |\alpha(e_1,\ldots,e_{n-1},T)|
 &=|\Omega(V-U,e_1,\ldots,e_{n-1},T)|\\
 &=|\omega_\tau(V-U,e_1,\ldots,e_{n-1})|\\
 &=|\langle V-U,\nu\rangle_{g(\tau)}|.
\end{align*}
The chosen frame has unit density for
$\dd\sigma_\tau\dd\tau$. Restricting
\eqref{eq:flux-form-identity} to it proves
\eqref{eq:flux-jacobian}. Only the normal component of $U$ matters,
so the formula does not depend on the tangential parametrization
of the moving hypersurface.

Apply the area formula in relatively compact coordinate patches.
If $N(P,H,y)$ denotes the number of preimages of $y$ in $H$, it gives
\[
 \int_H\Jac P\,\dd\sigma_\tau\dd\tau
   =\int_M N(P,H,y)\dV_{g(4)}(y)
   \ge\Vol_{g(4)}(P(H)).
\]
Summing over a countable coordinate decomposition proves
\eqref{eq:flux-inequality}.
\end{proof}

Let $\mathcal C_{i,4}$ be the closed null $\mathcal L$-cut set at
time $4$ on the $i$-th approximating Ricci flow. Let $B_{i,\xi}\subset K$
consist of its pullback under $\Psi_i(\cdot,4)$, together with the
remaining endpoints whose unique minimizing $\mathcal L$-geodesic
is bad. For each bad endpoint outside the cut set, let
$\bar\tau_i(y)$ be the last boundary-contact time, and denote the
image of the last-contact map
\[
 y\longmapsto
       \bigl(\beta_{i,y}(\bar\tau_i(y)),\bar\tau_i(y)\bigr)
\]
by $H_{i,\xi}$. Figure~\ref{fig:last-entry-projection} illustrates
the endpoint projection.

\begin{figure}[!htb]
 \centering
 \includegraphics[width=0.84\textwidth]{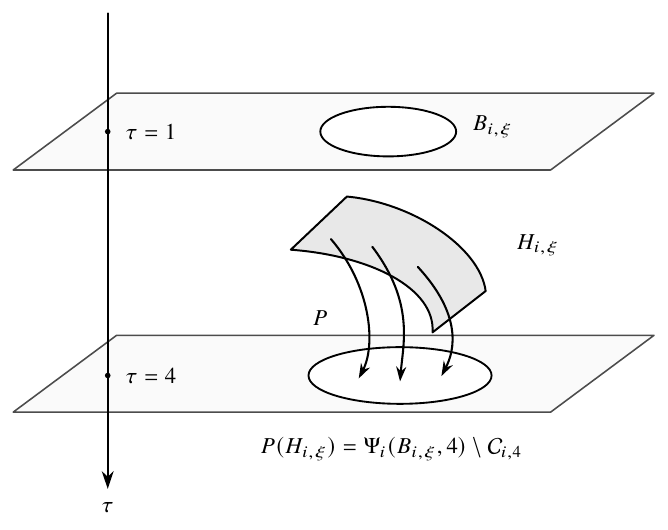}
 \caption{The reference set $B_{i,\xi}$ on the slice $\tau=1$,
 the last-contact set $H_{i,\xi}$, and the endpoint projection $P$
 to the slice $\tau=4$.}
 \label{fig:last-entry-projection}
\end{figure}

\begin{lemma}[Measurability]
\label{lem:measurability}
The set $B_{i,\xi}$, the last-contact time and map, and the set
$H_{i,\xi}$ are Borel. Moreover, $H_{i,\xi}$ lies in the normal
region on the smooth track of $\partial_{\rm in}\Omega_\xi$, at
times in $[1+\delta_a,4-\delta_b]$, and its endpoint projection
is exactly the image under $\Psi_i(\cdot,4)$ of the bad endpoints
outside the cut set. For every compact $D\Subset\Reg'_{-1}$ containing $x$, the set
$G_x(D)$ is closed in $D$.
\end{lemma}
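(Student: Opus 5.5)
The plan is to work on the fixed smooth closed flow $(M_i,g_i)$ and express each object through continuous or semicontinuous maps on Polish spaces. Two facts from Section~\ref{sec:action} drive this. First, $\Xi$ is a diffeomorphism from $\mathcal U_4\times(1,4)$ onto the normal region. Second, $\mathcal C_{i,4}=M_i\setminus E_{i,4}(\mathcal U_4)$ is closed. Hence the pullback $\Psi_i(\cdot,4)^{-1}(\mathcal C_{i,4})\cap K$ is closed in $K$.

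On $K'=K\setminus\Psi_i(\cdot,4)^{-1}(\mathcal C_{i,4})$, the map $y\mapsto v(y)=E_{i,4}^{-1}(\Psi_i(y,4))$ is continuous, indeed smooth. The unique minimizer is then $\tau\mapsto E_{i,\tau}(v(y))$, which depends continuously on $(y,\tau)$ by smooth dependence on initial data. The moving domain $\mathcal T_i(\Omega_\xi)$ is relatively open, since $\Psi_i$ is a diffeomorphism onto its image on a neighborhood of $D_\xi\times[1,4]$. The curve is bad exactly when it meets the complement of $\mathcal T_i(\Omega_\xi)$ at some $\tau\in[1,4]$.

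To show badness is Borel, I write the bad set as a projection of the closed set
\[
 \{(y,\tau):(\beta_{i,y}(\tau),\tau)\notin\mathcal T_i(\Omega_\xi)\}.
\]
Since $[1,4]$ is compact, this projection is closed in $K'$. Therefore $B_{i,\xi}$, the union of this set with the closed cut pullback, is Borel.

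The last-contact time is
\[
 \bar\tau_i(y)=\max\{\tau:(\beta_{i,y}(\tau),\tau)\notin\mathcal T_i(\Omega_\xi)\}.
\]
This is the maximum over a closed set depending upper semicontinuously on $y$, so $\bar\tau_i$ is upper semicontinuous and hence Borel. The last-contact map is the composition of a continuous map with $(y,\bar\tau_i(y))$, so it is Borel as well.

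For the image $H_{i,\xi}$, I use injectivity. Distinct $y$ have distinct $v(y)$, because $E_{i,4}$ is injective on $\mathcal U_4$. The map $\Xi$ is injective, so the last-contact map is an injective Borel map. By the Lusin--Souslin theorem its image $H_{i,\xi}$ is Borel.

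Next come the location statements. A last contact lies on the boundary of the moving domain. By Lemma~\ref{lem:last-entry-localization} it is not on the outer face or the corner, so it lies on the smooth track of $\partial_{\rm in}\Omega_\xi=\Sigma_\xi\cap\{\rho\le L_\xi\}$. Here $\Sigma_\xi$ is a smooth hypersurface because $c_\xi$ is a regular value. The time bounds $[1+\delta_a,4-\delta_b]$ are also part of Lemma~\ref{lem:last-entry-localization}. The contact point is $\Xi(v(y),\bar\tau)$ with $v(y)\in\mathcal U_4$ and $\bar\tau\in(1,4)$, so it lies in the normal region.

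The identity for the endpoint projection is immediate. Indeed $P(\Xi(v,\bar\tau))=E_{i,4}(v)=\Psi_i(y,4)$, and conversely every bad endpoint outside the cut set arises this way.

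Finally, I show $G_x(D)$ is closed in $D$. Take $y_k\in G_x(D)$ with $y_k\to y\in D$, and let $\gamma_k$ be minimizing $g$-geodesics from $x$ to $y_k$ with image in $D$. Since $D$ is compact in the smooth manifold $\Reg'_{-1}$, with geodesic equation bounds and lengths $d(x,y_k)\to d(x,y)$, Arzel\`a--Ascoli gives a subsequence converging smoothly to a geodesic $\gamma$ in $D$ of length $d(x,y)$. Here $d$ is continuous, being intrinsic. So $\gamma$ is minimizing for the intrinsic distance on all of $\Reg'_{-1}$, and $y\in G_x(D)$.

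The main obstacle I expect is the semicontinuity argument near the boundary: one must confirm that $\mathcal T_i(\Omega_\xi)$ is relatively open in spacetime, so that its complement is closed. This uses that $\Psi_i$ is a diffeomorphism on a neighborhood of $D_\xi\times[1,4]$, which holds for large $i$.
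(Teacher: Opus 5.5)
Your proposal is correct and follows the paper's proof essentially step by step. You get Borel measurability of $B_{i,\xi}$ from openness of the containment condition off the closed cut set, and upper semicontinuity of $\bar\tau_i$ (the paper phrases it as relative closedness of $\{\bar\tau_i\ge a\}$). You then use injectivity of the last-contact map together with the Lusin--Souslin theorem for $H_{i,\xi}$, and Lemma~\ref{lem:last-entry-localization} for the location statements. The only variation is in the closedness of $G_x(D)$: you extract the limit geodesic via the geodesic ODE and Arzel\`a--Ascoli on the compact set $D$, whereas the paper uses $H^1$-energy compactness and lower semicontinuity to obtain an energy minimizer and then invokes its smoothness; both routes work equally well here.
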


\begin{proof}
Off the cut set, the minimizing curve depends smoothly on its
endpoint. Containment of its compact graph in the relatively open
set $\mathcal T_i(\Omega_\xi)$ is an open condition. Thus $B_{i,\xi}$
is Borel.

Lemma~\ref{lem:last-entry-localization} places all last contacts
on the inner boundary, away from both endpoint times and the outer
face, including its intersection with the inner face. The contacts
therefore lie on a smooth hypersurface track. They lie in the
normal region because the endpoint is outside the cut set.

For a fixed $a$, the condition $\bar\tau_i(y)\ge a$ means that
the curve meets the closed boundary track at some time in
$[\max\{a,1+\delta_a\},4-\delta_b]$. This condition is relatively
closed on the set of bad endpoints outside the cut set: along a
convergent sequence of endpoints, pass to a convergent subsequence
of contact times. Hence $\bar\tau_i$ and the last-contact map are
Borel. The latter is injective, since $P$ recovers $\Psi_i(y,4)$.
The Borel injective-image theorem implies that $H_{i,\xi}$ is
Borel, and the assertion about its projection follows.

Finally, if $y_j\in G_x(D)$ and $y_j\to y\in D$, their
constant-speed minimizing geodesics stay in $D$ and have speeds
$d(x,y_j)\to d(x,y)$. Standard compactness and lower
semicontinuity give a curve in $D$ from $x$ to $y$ with energy
at most $d(x,y)^2$. It minimizes energy in the whole regular
manifold and is therefore a smooth minimizing geodesic. Thus
$y\in G_x(D)$.
\end{proof}

\begin{proposition}[Avoidance estimate]
\label{prop:avoidance}
For $0<\xi<\xi_0$ and $i\ge i_0(\xi)$,
\begin{equation}
 \Vol_g(B_{i,\xi})\le C\xi^{3-\eps},
 \label{eq:avoidance}
\end{equation}
where $C$ is independent of $i$ and $\xi$.
\end{proposition}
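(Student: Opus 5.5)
The cut-set part of $B_{i,\xi}$ contributes nothing. The closed $\mathcal L$-cut set $\mathcal C_{i,4}$ is null for $\dV_{g_i(-4)}$. On the relevant neighborhood of $K$, the map $\Psi_i(\cdot,4)$ is a diffeomorphism with $h_i(4)$ comparable to $4g$ by \eqref{eq:moving-comparison}, so its pullback is $\dV_g$-null. It therefore suffices to bound the volume of the bad endpoints outside the cut set. By Lemma~\ref{lem:measurability}, their image under $\Psi_i(\cdot,4)$ is exactly $P(H_{i,\xi})$. Using \eqref{eq:moving-comparison} once more, we get
\[
 \Vol_g\bigl(B_{i,\xi}\bigr)\le C\,\Vol_{g_i(-4)}\bigl(P(H_{i,\xi})\bigr),
\]
with $C=C(n)$.

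Next we apply Lemma~\ref{lem:flux} on the $i$-th flow. The moving hypersurface is $\iota_\tau=\Psi_i(\cdot,\tau)$ restricted to the inner face $\partial_{\rm in}\Omega_\xi$, and the time range is $[1+\delta_a,4-\delta_b]$. Lemma~\ref{lem:measurability} places $H_{i,\xi}$ in the normal region on this smooth track, and its tangent velocity is $U=U_i$. Then \eqref{eq:flux-inequality} gives
\[
 \Vol_{g_i(-4)}(P(H_{i,\xi}))\le\int_{H_{i,\xi}}\frac{J_i(v,4)}{J_i(v,\tau)}\,|\langle V_i-U_i,\nu\rangle|\dd\sigma_\tau\dd\tau .
\]
We control the integrand pointwise on $H_{i,\xi}$. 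Every such point lies on a minimizing $\mathcal L$-geodesic from $x_i$ with $\mathcal L$-length at most $A_0$, and $\bar\tau\ge1+\delta_a>1+\theta$ by Lemma~\ref{lem:last-entry-localization}. Hence Proposition~\ref{prop:jacobian-comparison} bounds the Jacobian ratio by $C_J$. Lemma~\ref{lem:relative-speed} bounds $|V_i-U_i|$ by $C_V'$ at the last-entry point. Both constants are independent of $\xi$ and $i$.

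It remains to bound the track measure. The area density of $\Psi_i(\Sigma_\xi\cap\{\rho\le L_\xi\},\tau)$ in $g_i(-\tau)$ is at most $C\,\Area_g(\Sigma_\xi\cap\{\rho<L+2\})$, since $h_i\le 2\tau g$ with $\tau\le4$. Integrating over a $\tau$-interval of length at most $3$ and using \eqref{eq:small-area}, we get
\[
 \int_{H_{i,\xi}}\dd\sigma_\tau\dd\tau\le C_{L,\eps}\,\xi^{3-\eps}.
\]
Here $L$ was fixed in Lemma~\ref{lem:last-entry-localization} independently of $\xi$, so this constant is independent of $\xi$ and $i$. Combining the three bounds yields \eqref{eq:avoidance}.

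The main point to check is uniformity in $\xi$. The Jacobian and speed constants must not depend on $\xi$, which is exactly what Lemma~\ref{lem:relative-speed} and Proposition~\ref{prop:jacobian-comparison} were arranged to give. The index threshold $i_0(\xi)$ absorbs the $\xi$-dependent requirements: \eqref{eq:moving-comparison} holding near $D_\xi\times[1,4]$, and $\epsilon_{i,\xi}\le1$.

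A secondary subtlety concerns the orientation of the track parametrization. Lemma~\ref{lem:flux} assumes an embedded family defined on a fixed manifold $N$. This holds with $N=\partial_{\rm in}\Omega_\xi$, since $c_\xi$ and $L_\xi$ are regular values. Last contacts avoid the corner with the outer face by Lemma~\ref{lem:last-entry-localization}, so we may restrict to the open smooth part $\Sigma_\xi\cap\{\rho<L_\xi\}$.
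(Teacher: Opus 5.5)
Your proposal is correct and takes essentially the same route as the paper. You apply the flux inequality of Lemma~\ref{lem:flux} on the inner-face track, bound the Jacobian ratio by Proposition~\ref{prop:jacobian-comparison} and the relative normal speed by Lemma~\ref{lem:relative-speed}, control the track area using $h_i\le2\tau g$ and \eqref{eq:small-area}, and pull back to $K$ at time $4$; your explicit treatment of the null cut set and of the uniformity in $\xi$ just spells out what the paper's short proof leaves implicit.
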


\begin{proof}
On $H_{i,\xi}$, Proposition~\ref{prop:jacobian-comparison}
bounds $J_i(4)/J_i(\tau)$, and
Lemma~\ref{lem:relative-speed} bounds the relative normal speed.
The endpoint projection of that set is the set of bad endpoints
outside the null cut set. Hence \eqref{eq:flux-inequality} gives
\begin{equation}
 \Vol_{g_i(-4)}(\Psi_i(B_{i,\xi},4))
 \le C\int_{1+\delta_a}^{4-\delta_b}
       \Area_{g_i(-\tau)}(\Psi_i(\partial_{\rm in}\Omega_\xi,\tau))
                                                       \dd\tau.
 \label{eq:bad-volume-flux}
\end{equation}
On this track, $h_i\le2\tau g$, so each area is at most
$(2\tau)^{(n-1)/2}\Area_g(\partial_{\rm in}\Omega_\xi)$.
Equation~\eqref{eq:small-area} bounds the right side by
$C\xi^{3-\eps}$. At time $4$ the pulled-back volume density is
uniformly comparable to $4^{n/2}\dV_g$.
Pulling \eqref{eq:bad-volume-flux} back to $K$ proves
\eqref{eq:avoidance}.
\end{proof}

\subsection{Regular minimizing geodesics}

\begin{lemma}[Compactness at fixed cutoff]
\label{lem:good-subsequence}
If $\xi>0$ is fixed and $y\in K$ satisfies $y\notin B_{i,\xi}$
for infinitely many $i$, then $y\in G_x(D_\xi)$.
\end{lemma}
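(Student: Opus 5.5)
We pass along the subsequence $i_k$ with $y\notin B_{i_k,\xi}$ and extract a limit of the minimizing $\mathcal L$-geodesics. Since $y\notin B_{i_k,\xi}$, the endpoint $\Psi_{i_k}(y,4)$ lies outside the cut set $\mathcal C_{i_k,4}$ and its unique minimizing $\mathcal L$-geodesic $\beta_{i_k}$ from $x_{i_k}$ is not bad. Its spacetime graph therefore lies in $\mathcal T_{i_k}(\Omega_\xi)$, and we may write $\beta_{i_k}(\tau)=\Psi_{i_k}(z_{i_k}(\tau),\tau)$ with $z_{i_k}(\tau)\in\Omega_\xi$ for all $\tau\in[1,4]$. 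Put $\gamma_k(s)=z_{i_k}(\tau(s))$ on $[0,1]$, so that $\gamma_k(0)=x$ and $\gamma_k(1)=y$.

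\textbf{Step 1: uniform bounds.} The curve $\gamma_k$ minimizes $\widehat{\mathcal A}_{i_k;0,1}$ with endpoints fixed; restricted curves are also minimizing. Because $\gamma_k$ stays in the compact set $D_\xi$, the coefficient bounds \eqref{eq:coefficient-time-control} hold along it once $k$ is large. The energy argument of Lemma~\ref{lem:relative-speed}, applied on the whole interval $[0,1]$ instead of $[\bar s,1]$, then gives a uniform speed bound $|\dot\gamma_k|_g\le C$. Here the total energy is bounded through \eqref{eq:physical-energy} and \eqref{eq:coordinate-speed}, using $r\le C$ on $D_\xi$. By Arzel\`a--Ascoli in the compact set $D_\xi$, a subsequence converges uniformly to a Lipschitz curve $\gamma_\infty\subset D_\xi$ from $x$ to $y$.

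\textbf{Step 2: minimality of the limit.} By lower semicontinuity of energy and \eqref{eq:lagrangian-convergence}, using $a_i\to g$, $b_i\to0$ and $c_i\to0$ uniformly on $D_\xi\times[0,1]$ together with the speed bound,
\[
 E_g(\gamma_\infty)\le\liminf_k\widehat{\mathcal A}_{i_k;0,1}(\gamma_k).
\]
For the upper bound, fix $\delta>0$ and take a regular curve $\sigma$ from $x$ to $y$ with $E_g(\sigma)\le d(x,y)^2+\delta$. Its image is compact in $\Reg'_{-1}$, so $\Psi_{i_k}(\sigma(s(\tau)),\tau)$ is an admissible competitor on the closed flow for large $k$. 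Minimality of $\beta_{i_k}$ and smooth convergence on the image of $\sigma$ give
\[
 \widehat{\mathcal A}_{i_k;0,1}(\gamma_k)\le\widehat{\mathcal A}_{i_k;0,1}(\sigma)\to E_g(\sigma).
\]
Hence $E_g(\gamma_\infty)\le d(x,y)^2$. Since $\gamma_\infty$ is a regular curve, Cauchy--Schwarz forces equality, constant speed $d(x,y)$, and length exactly $d(x,y)$. A curve minimizing energy among all curves in the regular manifold $\Reg'_{-1}$ is a smooth minimizing $g$-geodesic, and its image lies in $D_\xi$. Therefore $y\in G_x(D_\xi)$.

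\textbf{Main obstacle.} The delicate point is the uniform speed bound in Step 1 near $s=0$. The energy identity needs the coefficient control \eqref{eq:coefficient-time-control}, which is only available on a fixed compact set; containment in $D_\xi$ supplies exactly that. Then $e_i$ is comparable along the curve up to the factor $e^C$, and the $L^1$ energy bound converts into a pointwise bound. If that fails near the initial time, a fallback is to use only the $L^2$ energy bound: it gives equicontinuity with H\"older exponent $1/2$, which suffices for Arzel\`a--Ascoli and for the lower semicontinuity of $\int a_i(\dot\gamma,\dot\gamma)$. The $b_i$ and $c_i$ terms vanish in the limit by Cauchy--Schwarz.
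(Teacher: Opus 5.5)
Your proposal is correct and follows essentially the same route as the paper: compactness of the pulled-back curves in $D_\xi$, lower semicontinuity of energy together with the coefficient convergence \eqref{eq:lagrangian-convergence}, and comparison with lifted regular competitors through global $\mathcal L$-minimality, which yields $E_g(\gamma_\infty)=d(x,y)^2$. The only difference is in Step 1: you upgrade the energy bound to a pointwise speed bound by running the conserved-energy argument of Lemma~\ref{lem:relative-speed} on all of $[0,1]$, which is valid because the curves stay in $D_\xi$ where \eqref{eq:coefficient-time-control} holds. The paper instead uses only the $L^2$ energy bound with uniform and weak $H^1$ convergence, which is exactly your fallback.
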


\begin{proof}
Pull the corresponding minimizing curves back and use $s$ to obtain
$\gamma_i:[0,1]\to\Omega_\xi$, from $x$ to $y$.
The physical energy bound and the bounded coordinate drift on the
fixed compact set give $\int_0^1|\dot\gamma_i|_g^2\dd s\le C_\xi$.
By a standard compactness argument, a subsequence converges
uniformly and weakly in $H^1$ to $\gamma:[0,1]\to D_\xi$, and
\begin{equation}
 E_g(\gamma)\le\liminf_i E_g(\gamma_i).
 \label{eq:energy-lsc}
\end{equation}
The smooth coefficient convergence
\eqref{eq:lagrangian-convergence} and the energy bound imply
\begin{equation}
 \Act_i(\beta_i)=B(x,y)+E_g(\gamma_i)+o(1).
 \label{eq:action-limit}
\end{equation}
Indeed, the error is bounded by a coefficient tending to zero times
$\int_0^1(1+|\dot\gamma_i|+|\dot\gamma_i|^2)\dd s$.

Let $\sigma$ be any regular piecewise smooth curve from $x$ to $y$.
Its image and its soliton lift are compact regular sets, so for
large $i$ they belong to the same convergence domains even if
they are outside $D_\xi$. The $\mathcal L$-lengths of the lifted curves tend to
$B(x,y)+E_g(\sigma)$. Global minimality on the closed approximating Ricci flows
implies
\[
 \limsup_i\Act_i(\beta_i)\le B(x,y)+E_g(\sigma).
\]
Equations~\eqref{eq:energy-lsc} and \eqref{eq:action-limit} give
$E_g(\gamma)\le E_g(\sigma)$. Taking the infimum over all regular
competitors yields $E_g(\gamma)=d(x,y)^2$.
Its image is compact in the smooth manifold $\Reg'_{-1}$; local first
variation makes it a smooth constant-speed geodesic.
\end{proof}

\begin{proof}[Proof of Theorem~\ref{thm:quantitative}]
The domain construction gives \eqref{eq:Dxi-containment} after
$\xi_0$ is decreased to include the endpoint neighborhoods.
The set $E_\xi=K\setminus G_x(D_\xi)$ is Borel by
Lemma~\ref{lem:measurability}. Lemma~\ref{lem:good-subsequence} gives the
pointwise inclusion
\[
 E_\xi\subset\liminf_{i\to\infty}B_{i,\xi}
       =\bigcup_{N=1}^\infty\bigcap_{i\ge N}B_{i,\xi}.
\]
Fatou's lemma and \eqref{eq:avoidance} therefore give
\[
 \Vol_g(E_\xi)\le\Vol_g(\liminf_iB_{i,\xi})
       \le\liminf_i\Vol_g(B_{i,\xi})\le C\xi^{3-\eps}.
\]
\end{proof}

\begin{proof}[Proof of Theorem~\ref{thm:weak}]
Fix $x\in\Reg'_{-1}$ and a compact exhaustion $K_m\Subset\Reg'_{-1}$.
Apply Theorem~\ref{thm:quantitative} on each $K_m$, for example
with $\eps=1/2$, and choose any sequence $\xi_{m,j}\searrow0$.
The Borel set
\[
 N_{x,m}=\bigcap_{j=1}^\infty
             \bigl(K_m\setminus G_x(D_{\xi_{m,j}})\bigr)
\]
has zero volume, since its measure is bounded by
$C_m\xi_{m,j}^{5/2}$ for every $j$. Every point outside
$N_x^0=\bigcup_mN_{x,m}$ has a regular minimizing geodesic from
$x$. Its length equals $d(x,y)$, so it and all its subsegments
are ambient slice minimizers.

The distance function $d(x,\cdot)$ is locally Lipschitz on the
smooth manifold $\Reg'_{-1}$. Exclude $x$ and its null set of
nondifferentiability points. At each remaining endpoint, terminal
first variation, applied to endpoint variations with both signs,
identifies the terminal unit velocity of any regular minimizing
geodesic with $\nabla d(x,\cdot)$.
Backward uniqueness for the smooth geodesic equation identifies
all such curves. This proves the uniqueness assertion after
another null-set enlargement.
Finally, the soliton measure has a strictly positive smooth density
on $\Reg'_{-1}$, so it has the same null sets as $\dV_g$.
\end{proof}

\section{From weak convexity to strong convexity}
\label{sec:weak-applications}

We prove Theorem~\ref{thm:convexity} on the time $-1$ slice.
Write $\mathcal R=\Reg'_{-1}$ and $\mathcal S=X\setminus\mathcal R$.
Weak convexity first gives local volume and functional inequalities.
We then use the RCD heat flow to compare small balls along regular
minimizing geodesics. The entropy density identifies regular points in a limit
geodesic, and smooth geodesic uniqueness gives the assertion for every
ambient minimizer.
In this section, $p$ denotes a minimum point of $f$ on $X$.
Equip $X$ with the normalized soliton measure
\[
 \dd\mathfrak m=(4\pi)^{-n/2}e^{-f-\muo}\dV_g,
\]
where the measure is extended by zero across $X\setminus\mathcal R$.
Thus $\mathfrak m(X)=1$.
By Theorem~\ref{thm:completion}, $X$ is a proper length space and
$f$ extends continuously to $X$. On $\mathcal R$ we have
\begin{equation}
 \Ric_f:=\Ric+\nabla^2f=\frac{1}{2}g,\qquad
 \Delta_f:=\Delta-\langle\nabla f,\nabla\cdot\rangle,
 \qquad \Delta_f f=\frac{n}{2}-f.
 \label{eq:weighted-geometry}
\end{equation}

By Fang--Li~\cite[Lemma~D.1]{FangLiStructure} and
Theorem~\ref{thm:completion}, $(X,d,\mathfrak m)$ is an
$\mathrm{RCD}(1/2,\infty)$ space. We use the heat semigroup of this
completed metric measure space.

\subsection{Sharp potential estimates}

The quadratic potential estimates for smooth shrinkers originate
in Cao--Zhou~\cite{CaoZhou}; the dimensional form used below is
\cite[Lemma~2.1]{HaslhoferMuller}. On a metric soliton, writing $r$
for the distance from a regular center, Chan--Ma--Zhang proved the
lower bound $r^2/(8+\epsilon)-C(\epsilon)$ in
\cite[Theorem~1.1]{ChanMaZhangSolitons}. Fang--Li improved the
denominator to $4+\epsilon$ in
\cite[Lemma~7.12]{FangLiStructure}. Weak convexity recovers the
smooth-form estimate with coefficient $1/4$ and a dimensional
additive error in distance.

\begin{proposition}[Sharp potential bounds]
\label{prop:sharp-potential}
The function $f$ attains its minimum at some $p\in X$, which need
not belong to $\mathcal R$, and $0\le f(p)\le n/2$.
For every such minimum point and every $x\in X$,
\begin{equation}
 \frac14\bigl(d(p,x)-5n\bigr)_+^2
 \le f(x)\le
 \frac14\bigl(d(p,x)+\sqrt{2n}\bigr)^2,
 \label{eq:sharp-potential}
\end{equation}
where $a_+=\max\{a,0\}$.
\end{proposition}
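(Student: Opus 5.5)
We follow the smooth argument of Haslhofer--M\"uller \cite[Lemma~2.1]{HaslhoferMuller}, using weak convexity (Theorem~\ref{thm:weak}) in place of the existence of smooth minimizing geodesics.

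\emph{Upper bound and existence of a minimum.} We start with the upper bound, which only needs regular curves. On $\mathcal R$, \eqref{eq:potential-radius} gives $|\nabla\sqrt f|\le1/2$. Hence $\sqrt f$ is $1/2$-Lipschitz for $d$ and extends to $X$, so $\sqrt{f(x)}\le\sqrt{f(p)}+d(p,x)/2$. The minimum is attained because $f$ is continuous and proper on the proper space $X$ by \eqref{eq:quadratic-potential}. To get $f(p)\le n/2$, we integrate $\Delta_f f=n/2-f$ against $\mathfrak m$. Using the cutoffs $\eta_\rho\chi_A$ as in Section~\ref{sec:representation}, the boundary terms vanish by \eqref{eq:slice-bad-volume} and the Gaussian decay of $\mathfrak m$. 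This yields $\int f\dd\mathfrak m=n/2$, and therefore $\min f\le n/2$. Combining, $f(x)\le\frac14(d(p,x)+\sqrt{2n})^2$.

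\emph{Lower bound.} By density of $\mathcal R$ and continuity, it suffices to treat $p$ replaced by a regular point $p'$ close to $p$, and $x$ regular. By Theorem~\ref{thm:weak} applied at $p'$, for $\dV_g$-a.e.\ $x\in\mathcal R$ there is a regular unit-speed minimizing geodesic $\gamma:[0,s_0]\to\mathcal R$ with $s_0=d(p',x)$. Along it, the second variation with test function $\phi$ (equal to $t$ on $[0,1]$, $1$ in the middle, $s_0-t$ on $[s_0-1,s_0]$) gives $\int\phi^2\Ric(\dot\gamma,\dot\gamma)\le(n-1)\int\phi'^2=2(n-1)$. Substituting $\Ric=\frac12g-\nabla^2f$ gives
\[
\frac{s_0}{2}-C\le\int_0^{s_0}\phi^2\,\frac{\mathrm d^2}{\mathrm dt^2}f(\gamma)\dd t+2(n-1).
\]
Integrating by parts, the right side is controlled by $\langle\nabla f,\dot\gamma\rangle(x)$ plus averages of $|\nabla f|$ over unit segments near the endpoints. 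Those averages are bounded by $\sqrt f\le\sqrt{n/2}+O(1)$ near $p$ by the upper bound, and by $\sqrt{f(x)}+\frac12$ near $x$. The result is $\frac{s_0}{2}-Cn\le\sqrt{f(x)}$ with an explicit dimensional $C$. Tracking the constants should yield $\sqrt{f(x)}\ge\frac12(d(p,x)-5n)$ for $d(p,x)\ge5n$, and the inequality is trivial otherwise. The estimate then extends from a.e.\ $x$ to all $x\in X$ by continuity and density, and we let $p'\to p$.

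\emph{Main obstacle.} The main obstacle is the sharp constant bookkeeping, namely making the additive error exactly $5n$, together with the endpoint terms near $p$ when $p$ is singular. We handle the latter by working with $p'$ regular and using only the bound $f\le n/2+o(1)$ on the unit segment near $p'$, never smoothness of $f$ at $p$. The null exceptional set of Theorem~\ref{thm:weak} is harmless because both sides of \eqref{eq:sharp-potential} are continuous.
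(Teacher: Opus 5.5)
Your proposal is correct and follows essentially the paper's route: $\int_X f\,\mathrm d\mathfrak m=n/2$ via singular and outer cutoffs, the $\frac12$-Lipschitz bound for $\sqrt f$, and the Haslhofer--M\"uller index-form argument along the regular minimizers of Theorem~\ref{thm:weak}, extended by continuity and density with a regular approximation of $p$. The paper phrases the last step through the symmetric bound $d(x,y)\le 2\sqrt{f(x)}+2\sqrt{f(y)}+4n-\frac23$; in your version, note that your $\phi$ vanishes at both ends, so there is no boundary term $\langle\nabla f,\dot\gamma\rangle(x)$, and the bookkeeping gives $\sqrt{f(x)}\ge\frac{s_0}{2}-2n+\frac23-\sqrt{f(p')}$, which yields the constant $5n$ because $4n-\frac43+\sqrt{2n}\le 5n$.
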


\begin{proof}
We follow \cite[Lemma~2.1]{HaslhoferMuller}, explaining the changes
needed on $X$. The soliton identities make $\sqrt f$
$1/2$-Lipschitz on $X$, and \eqref{eq:quadratic-potential} makes
$f$ proper. Integrating $\Delta_f f=n/2-f$ gives
$\int_Xf\dd\mathfrak m=n/2$: first remove the singular cutoffs
of Lemma~\ref{lem:cutoff-package}, then exhaust by outer cutoffs.
The local gradient bound and the coarse growth estimates make the
respective errors vanish. Thus a minimum point $p$ exists,
$0\le f(p)\le n/2$, and the Lipschitz bound gives the upper estimate.

For each regular $x$ and almost every $y$, Theorem~\ref{thm:weak}
supplies a regular minimizing segment. The index-form argument
of \cite[Lemma~2.1]{HaslhoferMuller} applies along this segment and,
when $d(x,y)>2$, gives
\[
 d(x,y)\le2\sqrt{f(x)}+2\sqrt{f(y)}+4n-\frac23.
\]
Continuity extends this estimate to all such endpoints.
Approximating $p$ by regular basepoints and using $f(p)\le n/2$
yields the lower estimate in \eqref{eq:sharp-potential}; for
distances at most $2$, it is immediate.
\end{proof}

Equation~\eqref{eq:sharp-potential} implies
\begin{equation}
 \sup_{B_d(p,L)\cap\mathcal R}\bigl(f+|\nabla f|^2+\mathrm{R}\bigr)
       \le C(n,L).
 \label{eq:local-potential-control}
\end{equation}

\subsection{Volume comparison and the segment inequality}

\begin{lemma}[Local volume and segment comparison]
\label{lem:local-volume-segment}
For every $L>0$ there is $C=C(n,L)$ with the following properties.
If $0<r_1<r_2$ and $B_d(q,r_2)\subset B_d(p,L)$, then
\begin{equation}
 \frac{\mathfrak m(B_d(q,r_2))}{\mathfrak m(B_d(q,r_1))}
       \le e^{C(r_2-r_1)}\left(\frac{r_2}{r_1}\right)^n.
 \label{eq:local-volume-comparison}
\end{equation}
If $A_1,A_2\subset B_d(q,r)\cap\mathcal R$ are measurable and
$B_d(q,r)\subset B_d(p,L)$, then every nonnegative measurable
function $a$ on $B_d(q,3r)\cap\mathcal R$ satisfies
\begin{equation}
 \begin{split}
 &\int_{A_1\times A_2}\int_0^{d(x,y)}a(\gamma_{xy}(s))\dd s
                          \dd\mathfrak m(x)\dd\mathfrak m(y)\\
 &\hspace{1cm}\le Cr\bigl(\mathfrak m(A_1)+\mathfrak m(A_2)\bigr)
                          \int_{B_d(q,3r)}a\dd\mathfrak m,
 \end{split}
 \label{eq:local-segment-inequality}
\end{equation}
where $\gamma_{xy}$ is the unique regular minimizing geodesic for almost
every pair $(x,y)$.
\end{lemma}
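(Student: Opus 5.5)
We follow the smooth weighted argument for shrinkers (as in Huang--Li--Wang, Section~2, and Cheeger--Colding), using Theorem~\ref{thm:weak} in place of global smooth convexity. Fix $L$. By \eqref{eq:local-potential-control}, $f$, $|\nabla f|$ and $\mathrm R$ are bounded by $C(n,L)$ on $B_d(p,L)\cap\mathcal R$. The density of $\mathfrak m$ relative to $\dV_g$ is therefore comparable to a constant up to factors $e^{C(n,L)}$, and along any regular geodesic segment in $B_d(p,L)$ the drift satisfies $|\langle\nabla f,\dot\gamma\rangle|\le C(n,L)$.

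\textbf{Volume comparison.} For a regular basepoint $q$, work in the polar description given by Theorem~\ref{thm:weak}. For almost every $y$ there is a unique regular minimizing geodesic from $q$, and on this full-measure set the geodesics from $q$ form a star-shaped region of regular segments. Write the measure in geodesic polar coordinates as $\mathcal J(\theta,t)\,dt\,d\theta$, using the weighted Jacobian $\mathcal J e^{-f}$. Along each segment, $\Ric_f=\tfrac12 g$ and the Bakry--\'Emery/Wei--Wylie comparison apply. Using $|\partial_t f|\le C$ on the segment, we get $(\log\mathcal J_f)'\le (n-1)/t + C$. This makes $t\mapsto e^{-Ct}t^{-n}\mathcal J_f(\theta,t)$ nonincreasing, and Bishop--Gromov integration gives \eqref{eq:local-volume-comparison}.

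Bishop--Gromov integration needs the geodesics to cover almost all of the balls. This is exactly the content of the weak convexity: the segments from $q$ are regular, and the null set $N_q$ and the singular set carry no $\mathfrak m$-mass, since $\mathfrak m$ is extended by zero across $\mathcal S$. Points past the cut locus are also null. For a singular center $q$, approximate $q$ by regular $q_j$ and use $B_d(q_j,r\mp\delta)\subset B_d(q,r)\subset B_d(q_j,r+\delta)$ with continuity in the radius.

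\textbf{Segment inequality.} We follow Cheeger--Colding. Split $\int_0^{d}a(\gamma_{xy})\,ds$ into the halves near $x$ and near $y$, and by symmetry treat the half $s\ge d/2$, measured from $x$. Fix $x\in A_1$ regular and use polar coordinates around $x$. For a point at parameter $s\in[d/2,d]$ of a segment ending at $y$, the Jacobian monotonicity from the previous step gives $\mathcal J_f(\theta,d)\le e^{C r}(d/s)^{n}\mathcal J_f(\theta,s)\le C\mathcal J_f(\theta,s)$. Integrating over $y\in A_2$ then bounds this half by $Cr\int_{B_d(q,3r)}a\,d\mathfrak m$, using $d(x,y)\le 2r$ and noting that the segments stay in $B_d(q,3r)$. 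Integrating over $x$ produces $\mathfrak m(A_1)$, and the other half produces $\mathfrak m(A_2)$.

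Measurability of $(x,y)\mapsto \gamma_{xy}$ holds on the full-measure set of regular pairs, by the Borel structure of Lemma~\ref{lem:measurability} together with smooth dependence away from cut points. Almost-every uniqueness of $\gamma_{xy}$ for pairs follows from Theorem~\ref{thm:weak} and Fubini.

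\textbf{Main obstacle.} The hardest step is justifying the polar-coordinate Bishop--Gromov argument on the incomplete manifold $\mathcal R$. One must check that the Jacobian monotonicity holds up to the cut time along each regular segment, and that the excluded set (singular points and $N_x$) is null for every radius. I would handle this by writing $\mathfrak m(B_d(x,t))=\int_{\Theta}\int_0^{\min(t,c(\theta))}\mathcal J_f\,dt\,d\theta$, where $\Theta$ is the set of initial directions whose geodesics stay regular up to the cut time $c(\theta)$. Weak convexity shows that this formula captures all of the mass.
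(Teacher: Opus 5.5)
Your proposal is correct and follows essentially the same route as the paper. Weak convexity (Theorem~\ref{thm:weak}) gives polar integration along regular minimizing segments over a full-measure set; the bound \eqref{eq:local-potential-control} together with the Wei--Wylie comparison gives the weighted Jacobian estimate $\mathcal J_f(b,\theta)/\mathcal J_f(a,\theta)\le (b/a)^{n-1}e^{C(b-a)}$; and the usual Bishop--Gromov and Cheeger--Colding midpoint integrations, with approximation of singular centers, yield both estimates. One minor correction: to obtain exactly the exponent $n$ in \eqref{eq:local-volume-comparison}, integrate the monotone quantity $e^{-Ct}t^{-(n-1)}\mathcal J_f$ (given directly by your inequality $(\log\mathcal J_f)'\le (n-1)/t+C$), not $e^{-Ct}t^{-n}\mathcal J_f$.
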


\begin{proof}
Theorem~\ref{thm:weak} permits polar integration along regular
minimizing geodesics over a full-measure set of endpoints. For a regular
center $q$, put $d_q=d(q,\cdot)$. Weighted comparison gives
\[
 \Delta_fd_q\le\frac{n-1}{d_q}+C,\qquad
 \frac{J_f(b,\theta)}{J_f(a,\theta)}
       \le\left(\frac ba\right)^{n-1}e^{C(b-a)}
 \quad(0<a<b)
\]
along regular minimizing rays, by \eqref{eq:local-potential-control}
and \cite[Theorems~1.1(a) and~1.2(a)]{WeiWylie}.
The usual polar and midpoint integrations now prove the stated
estimates; see \cite[Theorems~2.5--2.6]{HuangLiWang},
\cite[Propositions~2.3 and~2.6]{ChenWangA}, and
\cite[Propositions~A.2--A.3]{LiWangCylinders}.
Approximation allows arbitrary centers in the volume comparison.
It also gives zero measure to spheres and, for $0<\alpha<1/2$
on a fixed bounded region,
\begin{equation}
 \frac{\mathfrak m(B_d(q,(1+\alpha)r))}
      {\mathfrak m(B_d(q,r))}\le1+C\alpha.
 \label{eq:thin-annulus}
\end{equation}

Radial integration by parts gives weak Laplacian comparison on $X$: the terminal contribution is $-\phi J_f\le0$
for $\phi\ge0$, also on rays ending at singular points. Hence, for
nonnegative compactly supported Lipschitz $\phi$ away from a regular
center $q$,
\[
 -\int_X\langle\nabla d_q,\nabla\phi\rangle\dd\mathfrak m
       \le\int_X\phi\left(\frac{n-1}{d_q}+C\right)\dd\mathfrak m.
\]
\end{proof}

\subsection{Local Poincar\'e and Sobolev inequalities}
\label{subsec:local-functional}

For every bounded open set $\Omega\subset X$, restriction gives
an isometric isomorphism
\[
 W^{1,2}(\Omega,d,\mathfrak m)
       \cong W^{1,2}(\Omega\cap\mathcal R,g,\mathfrak m)
\]
between the metric and classical weighted Sobolev spaces, preserving
gradient energy. To see this, for $0<\eps<1$,
Lemma~\ref{lem:cutoff-package} gives locally
\begin{equation}
 \int|\nabla\eta_\rho|^2\dd\mathfrak m
       \le C\rho^{2-\eps}\longrightarrow0.
 \label{eq:zero-capacity}
\end{equation}
Truncate the function first, and apply these cutoffs locally on
relatively compact subsets of $\Omega$. Their gradient errors
vanish by \eqref{eq:zero-capacity}. Smooth approximation on
$\mathcal R$ and a locally finite partition of unity in $\Omega$
then give the identification, as in
\cite[Proposition~A.5]{LiWangCylinders}. For a ball $B$, write
\[
 u_{B,\mathfrak m}=\fint_Bu\dd\mathfrak m
       :=\frac{1}{\mathfrak m(B)}\int_Bu\dd\mathfrak m.
\]

Chan--Ma--Zhang~\cite[Corollary~1.9]{ChanMaZhangSolitons}
established a global Sobolev inequality for functions compactly
supported in the regular part, with a scalar-curvature term in
the energy. Here we obtain local inequalities on the same ball
for functions in $W^{1,2}_{\mathrm{loc}}(X,d,\mathfrak m)$.

\begin{proposition}[Local functional inequalities]
\label{prop:local-functional}
For every $L>0$ there is $C=C(n,L)$ such that, for every
$B=B_d(q,r)\subset B_d(p,L)$ and
$u\in W^{1,2}_{\mathrm{loc}}(X,d,\mathfrak m)$,
\begin{equation}
 \fint_B|u-u_{B,\mathfrak m}|^2\dd\mathfrak m
       \le Cr^2\fint_B|\nabla u|^2\dd\mathfrak m.
 \label{eq:local-poincare}
\end{equation}
If $n>2$, then also
\begin{equation}
 \left(\fint_B|u|^{\frac{2n}{n-2}}\dd\mathfrak m\right)^{\frac{n-2}{n}}
       \le C\fint_B\bigl(r^2|\nabla u|^2+|u|^2\bigr)\dd\mathfrak m.
 \label{eq:local-sobolev}
\end{equation}
Both statements hold with $\dV_g$ in place of $\dd\mathfrak m$
and with the corresponding averages, after changing $C(n,L)$.
\end{proposition}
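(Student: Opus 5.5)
The plan is the standard route: local volume doubling plus the segment inequality of Lemma~\ref{lem:local-volume-segment} give a weak local Poincar\'e inequality, and a Jerison-type covering argument removes the enlargement. The Sobolev inequality then follows from Poincar\'e plus volume growth. Throughout, we work with $u$ restricted to $\mathcal R$. By the identification $W^{1,2}(\Omega,d,\mathfrak m)\cong W^{1,2}(\Omega\cap\mathcal R,g,\mathfrak m)$ established just above, together with density of smooth functions on $\mathcal R$, it suffices to treat $u$ smooth on $B\cap\mathcal R$ with $|\nabla u|\in L^2$. The singular set has $\mathfrak m$-measure zero, so every integral may be taken over $\mathcal R$.

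\textbf{Weak Poincar\'e on enlarged balls.} Write $B=B_d(q,r)$ and $3B=B_d(q,3r)$, and assume first that $3B\subset B_d(p,L')$ for a slightly larger $L'$. Start from
\[
 \fint_B|u-u_{B,\mathfrak m}|^2\dd\mathfrak m\le\frac{1}{\mathfrak m(B)^2}\int_{B\times B}|u(x)-u(y)|^2\dd\mathfrak m(x)\dd\mathfrak m(y).
\]
For almost every pair, Theorem~\ref{thm:weak} provides the regular minimizing segment $\gamma_{xy}$. Along it, Cauchy--Schwarz gives
\[
 |u(x)-u(y)|^2\le d(x,y)\int_0^{d(x,y)}|\nabla u|^2(\gamma_{xy}(s))\dd s\le 2r\int_0^{d(x,y)}|\nabla u|^2(\gamma_{xy}(s))\dd s.
\]
Apply \eqref{eq:local-segment-inequality} with $A_1=A_2=B\cap\mathcal R$ and $a=|\nabla u|^2$. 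This yields
\[
 \fint_B|u-u_B|^2\le Cr^2\frac{\mathfrak m(3B)}{\mathfrak m(B)}\fint_{3B}|\nabla u|^2.
\]
The ratio $\mathfrak m(3B)/\mathfrak m(B)$ is bounded by \eqref{eq:local-volume-comparison}, so we obtain a weak $(2,2)$-Poincar\'e inequality with enlargement factor $3$, uniformly for balls inside $B_d(p,L')$.

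\textbf{Removing the enlargement.} This is the step I expect to be the main obstacle. The standard tool is Jerison's Whitney-covering argument, as presented in Hajłasz--Koskela. It upgrades a weak Poincar\'e inequality to a strong one on balls of a doubling length space. Its inputs are doubling, which we have locally from \eqref{eq:local-volume-comparison}, and the fact that balls are John domains, which holds because $X$ is a length space (Theorem~\ref{thm:completion}). The difficulty is that both doubling and the weak inequality are only local, valid inside $B_d(p,L')$. The remedy is that every Whitney ball used in the chain argument for $B\subset B_d(p,L)$ lies inside $B$, and its threefold enlargement stays inside $B_d(p,L+2r)\subset B_d(p,3L)$. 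Taking $L'=3L$ in the previous step then gives constants depending only on $n$ and $L$. An alternative, as in \cite[Proposition~2.52]{ChenWangA} and \cite[Section~2]{HuangLiWang}, is to apply the segment inequality directly to the chains. The point to check is that the chaining argument only ever evaluates $|u-u_{B'}|$ on subballs $B'\subset B$. Those estimates then depend only on $|\nabla u|$ restricted to $B$, which yields \eqref{eq:local-poincare}.

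\textbf{Sobolev inequality and the $\dV_g$ versions.} For $n>2$, combine \eqref{eq:local-poincare} with the relative volume lower bound
\[
 \frac{\mathfrak m(B_d(y,s))}{\mathfrak m(B)}\ge c\Bigl(\frac sr\Bigr)^n,
\]
valid for $y\in B$ and $s\le r$, which follows from \eqref{eq:local-volume-comparison}. The Hajłasz--Koskela theorem then produces a Sobolev--Poincar\'e inequality with exponent $2n/(n-2)$. Because the dimension exponent equals $n$ exactly, this is the critical Sobolev exponent. Adding back the mean, controlled by $\fint_B|u|^2$, gives \eqref{eq:local-sobolev}. Finally, on $B_d(p,L)$ the density $e^{-f-\muo}$ is bounded above and below by constants depending on $n,L$, by \eqref{eq:sharp-potential}. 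Hence all averages with respect to $\mathfrak m$ and $\dV_g$ are comparable, and both inequalities transfer to $\dV_g$ at the cost of changing $C(n,L)$.
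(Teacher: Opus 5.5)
Your proposal is correct and follows essentially the same route as the paper. The segment inequality applied to $|\nabla u|^2$ gives a weak Poincar\'e inequality on $B_d(q,3r)$; local doubling plus chaining along minimizing segments toward the center (the John-domain argument for balls in a geodesic space) removes the enlargement; the exponent-$n$ relative volume bound gives the Sobolev--Poincar\'e estimate on the same chains; adding the mean gives \eqref{eq:local-sobolev}; and the bounded oscillation of $f$ on $B_d(p,L)$ transfers everything to $\mathrm dV_g$. One point needs a more careful statement: the Whitney balls $B'$ must satisfy $3B'\subset B$ itself (radius a small fraction of the distance to $X\setminus B$), not merely $3B'\subset B_d(p,3L)$, so that only $|\nabla u|$ on $B$ enters the right-hand side.
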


\begin{proof}
We follow \cite[Propositions~2.7--2.8]{ChenWangA},
\cite[Propositions~2.7 and~2.9]{HuangLiWang}, and
\cite[Propositions~A.6--A.7]{LiWangCylinders}, using the weighted
volume comparison of \cite[Theorem~1.2(a)]{WeiWylie} in the form
of Lemma~\ref{lem:local-volume-segment}.
Apply the segment inequality to $|\nabla u|^2$ to obtain the weak
local $L^2$-Poincar\'e inequality with an enlarged ball on the
right. The local doubling supplied by
\eqref{eq:local-volume-comparison} and ball chaining give \eqref{eq:local-poincare} with the same
ball on both sides. Here the chains follow minimizing segments toward
the center of $B$, with each auxiliary ball chosen smaller than its
distance to $X\setminus B$. This is the usual John-domain proof
for a metric ball in a geodesic space. The relative volume bound in
\eqref{eq:local-volume-comparison}, with exponent $n$, gives the
Sobolev--Poincar\'e estimate for $u-u_{B,\mathfrak m}$ on the
same chains. Adding the mean gives \eqref{eq:local-sobolev}, as in the local
Sobolev theory of Saloff-Coste~\cite{SaloffCoste1992}.
The preceding Sobolev identification extends the estimates to the stated function class.
Finally, \eqref{eq:local-potential-control} bounds the oscillation
of $f$ on $B_d(p,L)$. Weighted and unweighted averages are therefore
comparable with constants depending only on $n,L$, giving the
unweighted statements.
\end{proof}

\begin{remark}
\label{rem:rcd-local}
By Fang--Li~\cite[Lemma~D.1]{FangLiStructure} and the identification
in Theorem~\ref{thm:completion}, $(X,d,\mathfrak m)$ is an
$\mathrm{RCD}(1/2,\infty)$ space. This gives the global spectral gap
\[
 \int_X|u-u_X|^2\dd\mathfrak m
       \le2\int_X|\nabla u|^2\dd\mathfrak m,\qquad
 u_X=\int_Xu\dd\mathfrak m;
\]
see \cite{GigliKettererKuwadaOhta}. The local estimates
\eqref{eq:local-poincare}--\eqref{eq:local-sobolev} also use the
finite-dimensional volume and segment comparisons obtained from
weak convexity and the soliton identities.
\end{remark}

\subsection{Entropy density and regular points}
\label{subsec:entropy-density}

By Bamler~\cite[Theorem~2.18 and Theorem~2.16(e)]{BamlerStructure},
every metric tangent space at a point of $X$ is a noncollapsed
Ricci-flat metric cone. The corresponding parabolic blow-up is a
static cone, with smooth convergence on the regular part.

For $z\in X$, let $\mathcal N_z(\tau)$ be the pointed Nash entropy
of the tangent Ricci flow based at the spacetime point $(z,-1)$,
with backward time scale $\tau>0$. This is distinct from the entropy
of the weighted heat semigroup on the fixed slice. Set
\[
 \mathcal N_z(0)=\lim_{\tau\searrow0}\mathcal N_z(\tau).
\]
For a metric tangent cone $(C,o)$ at $z$, write
\[
 \vartheta(C)=\frac{\mathscr{H}^n(B_C(o,1))}{\omega_n},
 \qquad \omega_n=\Vol_{\mathbb R^n}(B(0,1)).
\]
Here $\mathscr{H}^n$ has the normalization agreeing with Riemannian
volume on the regular part.

\begin{lemma}[Entropy and cone volume]
\label{lem:entropy-density}
For every $z\in X$ and every tangent cone $C$ at $z$,
\begin{equation}
 \mathcal N_z(0)=\log\vartheta(C).
 \label{eq:entropy-density}
\end{equation}
In particular, the right-hand side is independent of the choice
of tangent cone. There is $\epsilon_n>0$ such that
\begin{equation}
 z\in\mathcal S\quad\Longrightarrow\quad
 \vartheta(C)\le e^{-\epsilon_n}<1.
 \label{eq:singular-density-gap}
\end{equation}
A point of $X$ is regular if and only if it has a Euclidean tangent
cone.
\end{lemma}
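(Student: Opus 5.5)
The plan is to compare the parabolic blow-up at $(z,-1)$ with a static cone flow and then read off the Nash entropy of the limit. Fix $z\in X$ and a metric tangent cone $(C,o)$ obtained along scales $r_j\searrow0$. By Bamler~\cite[Theorems~2.16(e) and~2.18]{BamlerStructure}, and by the identification of the slice with the associated soliton slice in Theorem~\ref{thm:completion}, the rescaled flows $(Z',r_j^{-1}d_{Z'},(z,-1),r_j^{-2}(\Time'+1))$ subconverge to a static flow on the Ricci-flat cone $C$. The convergence is smooth on the regular part, and the conjugate heat kernels converge in the $\mathbb F$-sense. Pointed Nash entropy scales by $\mathcal N^{(j)}_{z}(\tau)=\mathcal N_z(r_j^2\tau)$. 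Monotonicity of $\mathcal N_z$ in $\tau$ and the entropy lower bound $-Y$ guarantee that $\mathcal N_z(0)$ exists. Since Nash entropy is continuous under $\mathbb F$-convergence in the noncollapsed setting, the limit flow has constant pointed Nash entropy $\mathcal N_z(0)$ at its vertex.

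The next step is to compute this constant on the static cone. At the vertex $o$ the conjugate kernel is the Gaussian $(4\pi\tau)^{-n/2}e^{-d(o,\cdot)^2/4\tau}$, suitably normalized. Because the cone is the completion of its regular part, and the singular set has codimension at least four, the cone is weakly convex; this justifies polar coordinates and integration by parts on $C$. Volume homogeneity $\mathscr H^n(B_C(o,r))=\vartheta(C)\omega_nr^n$ then gives total mass $\vartheta(C)$ for the unnormalized Gaussian. Hence the normalized kernel is $\vartheta(C)^{-1}(4\pi\tau)^{-n/2}e^{-r^2/4\tau}$, with potential $f=r^2/4\tau+\log\vartheta(C)$. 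Using $\int f\,\mathrm d\nu-\frac n2=\log\vartheta(C)$, we get Nash entropy $\log\vartheta(C)$. This proves \eqref{eq:entropy-density}, and independence of the cone follows because the left side does not depend on it.

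For the gap \eqref{eq:singular-density-gap}, I will use Bamler's $\epsilon$-regularity~\cite{BamlerStructure}. There is $\epsilon_n>0$ such that $\mathcal N_{x}(r^2)\ge-\epsilon_n$ implies $\rRm(x)\ge\epsilon_nr$. The smooth version on the approximating flows passes to the limit via Fang--Li's convergence of Nash entropy and curvature radius. Suppose $\vartheta(C)>e^{-\epsilon_n}$. Then $\mathcal N_z(0)>-\epsilon_n$, so by monotonicity $\mathcal N_z(r^2)>-\epsilon_n$ for small $r$. This gives $\rRm(z)>0$, so $z\in\mathcal R$, and the contrapositive is the gap.

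For the last assertion, a regular point has Euclidean tangent cone by smoothness. Conversely, a Euclidean cone gives $\vartheta=1>e^{-\epsilon_n}$, so the point is regular. The main obstacle is the first step: we need continuity of pointed Nash entropy under the blow-up at a possibly singular basepoint $(z,-1)$, and we need the limit flow to be exactly the static cone with its kernel equal to the Gaussian. Singular basepoints require approximating by regular points, using the spacetime Lipschitz continuity of $\mathcal N$ in \cite{FangLiStructure}. I would try first to handle this by citing Bamler's identification of tangent flows at points of metric solitons, together with Theorem~\ref{thm:completion}.
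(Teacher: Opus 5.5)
Your proposal follows the paper's proof: you compute the vertex Nash entropy of the static cone flow as $\log\vartheta(C)$ from the normalized Gaussian kernel, identify it with $\mathcal N_z(0)$ by entropy convergence along parabolic dilations, and use the $\epsilon$-regularity threshold (the paper cites Fang--Li's Proposition~7.7) for the gap and for the Euclidean-cone characterization. One small correction: the weak-convexity detour for $C$ is unnecessary, and your stated reason (completion plus codimension four) would not by itself justify it. The layer-cake formula with $\mathscr{H}^n(B_C(o,r))=\vartheta(C)\omega_n r^n$ already gives both Gaussian integrals, and no integration by parts is needed.
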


\begin{proof}
The static tangent soliton has potential $d_C(o,\cdot)^2/(4\tau)$
up to its normalization constant. Its vertex conjugate heat kernel is
\[
 K_C(o,w;\tau)=\vartheta(C)^{-1}(4\pi\tau)^{-n/2}
               \exp\left(-\frac{d_C(o,w)^2}{4\tau}\right).
\]
Indeed, cone polar integration gives
\[
 \int_C e^{-d_C(o,w)^2/(4\tau)}\dd\mathscr{H}^n(w)
       =\vartheta(C)(4\pi\tau)^{n/2},
 \qquad
 \int_C\frac{d_C(o,w)^2}{4\tau}K_C\dd\mathscr{H}^n(w)=\frac n2.
\]
Thus its pointed Nash entropy is $\log\vartheta(C)$.
Apply this calculation to a sequence of parabolic dilations whose
spatial limit is $C$. Entropy convergence for these dilations gives
\eqref{eq:entropy-density}; see the entropy convergence and tangent
flow results in \cite[Theorems~2.10--2.11]{BamlerStructure}.

At a regular point the small-scale entropy is zero. Conversely,
if $\mathcal N_z(0)>-\epsilon_n$, then at a sufficiently small
positive scale the entropy satisfies the regularity threshold in
\cite[Proposition~7.7]{FangLiStructure}, after decreasing
$\epsilon_n$ if necessary. Hence $z$ is regular. Its contrapositive
and \eqref{eq:entropy-density} give
\eqref{eq:singular-density-gap}. A Euclidean tangent cone has
$\vartheta=1$, so the same argument proves the last assertion.
\end{proof}

\subsection{Heat regularization on the completion}
\label{subsec:heat-regularity}

Let $P_s$ be the weighted heat semigroup on $(X,d,\mathfrak m)$,
where $s$ denotes heat time on the fixed slice. The Sobolev
identification above identifies its Dirichlet form with the closure
of $\int_{\mathcal R}|\nabla v|^2\dd\mathfrak m$. In particular,
its generator on $\mathcal R$ is $\Delta_f$, and positive-time heat
solutions are smooth there. The semigroup preserves constants and
satisfies the Bakry--\'Emery estimate
\begin{equation}
 |\nabla P_sv|^2\le e^{-s}P_s(|\nabla v|^2)
 \qquad\mathfrak m\text{-almost everywhere}.
 \label{eq:heat-gradient}
\end{equation}
See \cite[Section~6]{AmbrosioGigliSavare}.

\begin{lemma}[Bochner inequality on the completion]
\label{lem:global-bochner}
Let $v$ be bounded and Lipschitz on $X$, and set $h=P_sv$ for
$s>0$. Then $|\nabla^2h|\in L^2_{\mathrm{loc}}(X,\mathfrak m)$
and $|\nabla h|^2\in W^{1,2}_{\mathrm{loc}}(X,d,\mathfrak m)$.
Moreover,
\begin{equation}
 (\partial_s-\Delta_f)|\nabla h|^2
       \le-2|\nabla^2h|^2-|\nabla h|^2
 \label{eq:weighted-Bochner}
\end{equation}
in the distributional sense on $X\times(0,\infty)$. The Hessian is computed on
$\mathcal R$ and extended by zero across $\mathcal S$.
\end{lemma}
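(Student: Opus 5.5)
The proof has two parts. The first is the local regularity of $h=P_sv$: $L^2_{\mathrm{loc}}$ Hessian and $|\nabla h|^2\in W^{1,2}_{\mathrm{loc}}$. The second is the distributional Bochner inequality \eqref{eq:weighted-Bochner} across $\mathcal S$.

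On $\mathcal R$, $h$ is smooth and solves $(\partial_s-\Delta_f)h=0$. The classical weighted Bochner formula with $\Ric_f=\tfrac12 g$ from \eqref{eq:weighted-geometry} gives the pointwise identity
\[
 (\partial_s-\Delta_f)|\nabla h|^2=-2|\nabla^2h|^2-|\nabla h|^2
\]
on $\mathcal R\times(0,\infty)$. Moreover \eqref{eq:heat-gradient} together with the Lipschitz bound on $v$ yields $|\nabla h|^2\le e^{-s}\|\nabla v\|_\infty^2$ almost everywhere, so $|\nabla h|$ is uniformly bounded.

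The task is then to pass from $\mathcal R$ to $X$ using the singular cutoffs $\eta_\rho$ of Lemma~\ref{lem:cutoff-package}, restricted to the slice, whose energy tends to zero by \eqref{eq:zero-capacity}. Fix a nonnegative Lipschitz test function $\phi$ with compact support in $X\times(0,\infty)$, and test the smooth identity against $\phi\eta_\rho^2$. Integration by parts is legitimate because $\phi\eta_\rho^2$ is supported in a compact subset of $\mathcal R$. The error terms have the form
\[
 \int \phi\,\eta_\rho\langle\nabla\eta_\rho,\nabla|\nabla h|^2\rangle\dd\mathfrak m\dd s .
\]
Since $\nabla|\nabla h|^2=2\nabla^2h(\nabla h,\cdot)$ and $|\nabla h|$ is bounded, Cauchy--Schwarz bounds this by
\[
 \delta\int\phi\,\eta_\rho^2|\nabla^2h|^2+C_\delta\|\nabla h\|_\infty^4\int\phi|\nabla\eta_\rho|^2 .
\]
The first piece is absorbed into the good term $-2|\nabla^2h|^2$, and the second tends to zero as $\rho\to0$ by \eqref{eq:zero-capacity}.

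Taking first $\phi=\phi(x)\psi(s)$ with $\psi$ localizing in time gives a uniform bound on $\int\phi\eta_\rho^2|\nabla^2h|^2$; this is the Caccioppoli step. It requires the boundary term $\int\phi\,\partial_s|\nabla h|^2$, which is handled by integrating by parts in $s$ using boundedness of $|\nabla h|^2$, and the term $\int \langle\nabla\phi,\nabla|\nabla h|^2\rangle\eta_\rho^2$, which is again absorbed. Letting $\rho\to0$ by monotone convergence gives $|\nabla^2 h|\in L^2_{\mathrm{loc}}$ in space-time. Since each $s$-slice is controlled via the semigroup property, applied from $s/2$ to $s$, we also get the fixed-$s$ statement. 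Then $\nabla|\nabla h|^2\in L^2_{\mathrm{loc}}(\mathcal R)$, and by the Sobolev identification of Subsection~\ref{subsec:local-functional}, $|\nabla h|^2\in W^{1,2}_{\mathrm{loc}}(X,d,\mathfrak m)$.

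With these bounds in hand, redo the test with $\phi\eta_\rho$ and let $\rho\to0$ by dominated convergence. The cutoff gradient errors vanish because of the $L^2$ Hessian bound, boundedness of $\nabla h$, and \eqref{eq:zero-capacity}. This yields \eqref{eq:weighted-Bochner} in the distributional sense on $X\times(0,\infty)$, with the Hessian extended by zero across the null set $\mathcal S$.

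The main obstacle is that the first Caccioppoli step must be uniform in $\rho$ before the Hessian is known to be square-integrable. The plan is to make the absorption argument above quantitative, with $\delta=1/2$. As a cross-check, the RCD$(1/2,\infty)$ self-improvement of Bakry--\'Emery (Savar\'e) independently gives $|\nabla h|^2\in W^{1,2}$ and the measure-valued $\Gamma_2$ inequality. One then needs only to identify its absolutely continuous part with the smooth Hessian on $\mathcal R$, using the fact that $\mathcal S$ has zero capacity.
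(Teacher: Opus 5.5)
Your parabolic Caccioppoli argument does prove \eqref{eq:weighted-Bochner} distributionally on $X\times(0,\infty)$, in fact with equality. You test the space-time Bochner identity with $\phi\eta_\rho^2$, move $\partial_s$ onto the test function, absorb using $|\nabla h|\le\|\nabla v\|_\infty$ and \eqref{eq:zero-capacity}, and then retest with $\phi\eta_\rho$.

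The gap is the fixed-time regularity, which the lemma asserts for each $s>0$. Write $q=|\nabla h|^2$. Your estimate controls only $\int_{s_1}^{s_2}\int\chi^2|\nabla^2h|^2\,\dd\mathfrak m\,\dd s$. It therefore gives $|\nabla^2h|\in L^2_{\mathrm{loc}}$ and $q\in W^{1,2}_{\mathrm{loc}}$ only for almost every $s$. The appeal to the semigroup property from $s/2$ to $s$ does not close this, for three reasons:
\begin{itemize}
\item Replacing $v$ by $P_{s/2}v$ merely translates the exceptional null set of times, so it cannot exclude a prescribed $s$.
\item A time-integrable slice energy can blow up at that particular $s$, so a $\liminf$ plus lower-semicontinuity argument also fails.
\item Forward propagation of Hessian bounds would involve $\Rm$, which is uncontrolled near $\mathcal S$.
\end{itemize}

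The missing input is control of the time-derivative term at a single time. Specifically, you need $\partial_sh=\Delta_fh\in W^{1,2}(X,d,\mathfrak m)$, with norm bounded on compact $s$-intervals. This follows from spectral calculus for the self-adjoint generator, since $v\in L^2(\mathfrak m)$ because $\mathfrak m(X)=1$; for example, $\|\nabla\Delta_fP_sv\|_{L^2}\le Cs^{-3/2}\|v\|_{L^2}$.

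With this fact, you can run your absorption at fixed $s$ on the elliptic identity $\frac12\Delta_fq=|\nabla^2h|^2+\frac12q+\langle\nabla h,\nabla\Delta_fh\rangle$. The last term is bounded by $\|\nabla h\|_{L^2}\|\nabla\Delta_fh\|_{L^2}$. This is exactly the paper's proof, which then recovers the parabolic statement from $\partial_sq=2\langle\nabla h,\nabla\Delta_fh\rangle$. So your route gives a slightly more elementary proof of the space-time inequality, with no spectral input. The paper's route gives the every-$s$ regularity.

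Your Savar\'e cross-check would also give the fixed-time statement, but it rests on the same fact: $h$ must be a test function with $\Delta_fh\in W^{1,2}$. It also still requires identifying Gigli's Hessian with the classical one on $\mathcal R$.

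Minor points:
\begin{itemize}
\item Absorbing $\eta_\rho^2\langle\nabla\phi,\nabla q\rangle$ needs $\phi=\chi^2$ (or $|\nabla\phi|^2\le C\phi$).
\item $\eta_\rho$ is not monotone in $\rho$, so use Fatou's lemma rather than monotone convergence.
\item The constant in your Cauchy--Schwarz step should be $\|\nabla h\|_\infty^2$, not $\|\nabla h\|_\infty^4$.
\end{itemize}
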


\begin{proof}
Write $q=|\nabla h|^2$. Equation~\eqref{eq:heat-gradient} gives
$|\nabla h|\le L:=\|\nabla v\|_\infty$. Since
$\mathfrak m(X)=1$, we have $v\in L^2(X,\mathfrak m)$.
Spectral calculus for the nonnegative self-adjoint operator
$-\Delta_f$ gives
\[
 \Delta_fh\in W^{1,2}(X,d,\mathfrak m),\qquad
 h\in C^1\bigl((0,\infty);W^{1,2}(X,d,\mathfrak m)\bigr),
 \qquad \partial_sh=\Delta_fh,
\]
with the relevant norms bounded on compact positive-time intervals.
On $\mathcal R$, the smooth weighted Bochner identity is
\[
 \frac12\Delta_fq
       =|\nabla^2h|^2+\frac12q
                     +\langle\nabla h,\nabla\Delta_fh\rangle.
\]

Choose a compactly supported Lipschitz function $0\le\chi\le1$
on $X$ and put $\zeta_\rho=\chi\eta_\rho$, with the singular
cutoffs restricted to the reference slice. Its support is compact
in $\mathcal R$, so integration by parts and
$|\nabla q|\le2L|\nabla^2h|$ give
\[
 \begin{split}
 \int_{\mathcal R}\zeta_\rho^2|\nabla^2h|^2\dd\mathfrak m
 &\le 2L\int_{\mathcal R}\zeta_\rho|\nabla\zeta_\rho|
                    |\nabla^2h|\dd\mathfrak m
       +\int_{\mathcal R}\zeta_\rho^2
                    |\nabla h|\,|\nabla\Delta_fh|\dd\mathfrak m\\
 &\le\frac12\int_{\mathcal R}\zeta_\rho^2|\nabla^2h|^2
                    \dd\mathfrak m
       +2L^2\int_{\mathcal R}|\nabla\zeta_\rho|^2\dd\mathfrak m
       +\|\nabla h\|_2\|\nabla\Delta_fh\|_2.
 \end{split}
\]
Here the norms are in $L^2(X,\mathfrak m)$. By
\eqref{eq:zero-capacity}, the cutoff energy on the right is bounded
independently of $\rho$. Absorption and Fatou's lemma prove
$|\nabla^2h|\in L^2_{\mathrm{loc}}(X,\mathfrak m)$ by taking
$\chi=1$ on any prescribed compact set. These bounds are uniform
on compact positive-time intervals. The bound for $\nabla q$
and the Sobolev identification in
Subsection~\ref{subsec:local-functional} then give
$q\in W^{1,2}_{\mathrm{loc}}(X,d,\mathfrak m)$.

Now test the smooth identity with $\phi\eta_\rho$, where
$\phi$ is compactly supported and Lipschitz on $X$. The only
additional gradient term vanishes, since
\[
 \left|\int_{\mathcal R}
       \phi\langle\nabla q,\nabla\eta_\rho\rangle
                         \dd\mathfrak m\right|
 \le\|\phi\|_\infty
       \|\nabla q\|_{L^2(\operatorname{supp}\phi)}
       \|\nabla\eta_\rho\|_{L^2(\operatorname{supp}\phi)}
 \longrightarrow0.
\]
All other terms pass to the limit by the established integrability.
Thus
\[
 -\frac12\int_X\langle\nabla q,\nabla\phi\rangle\dd\mathfrak m
   =\int_X\phi\bigl(|\nabla^2h|^2+\frac12q
                  +\langle\nabla h,\nabla\Delta_fh\rangle\bigr)
                  \dd\mathfrak m.
\]
Finally, the $W^{1,2}$ differentiability of $h$ gives
$\partial_sq=2\langle\nabla h,\nabla\Delta_fh\rangle$ in $L^1$.
Integrating against compactly supported time tests proves
\eqref{eq:weighted-Bochner}, in fact with equality.
\end{proof}

\begin{lemma}[Local heat estimates and cutoffs]
\label{lem:local-heat-tools}
For every $L>0$, there are $s_0=s_0(n,L)>0$ and
$r_0=r_0(n,L)\in(0,\min\{1,\sqrt{s_0/2}\}]$ such that
the heat kernel has local Gaussian upper and lower bounds on
$B_d(p,L)$ for $0<s\le s_0$, and the corresponding local
parabolic mean-value estimates hold there. The constants are
controlled by the local volume and Poincar\'e constants on
$B_d(p,4L+4)$. For $q\in B_d(p,L)$ and $0<r\le r_0$, there
are cutoffs $\chi$ supported in $B_d(q,r)$, equal to one on
$B_d(q,r/2)$, Lipschitz on $X$ and smooth on $\mathcal R$, such that
\begin{equation}
 0\le\chi\le1,\qquad
 r|\nabla\chi|+r^2|\Delta_f\chi|\le C.
 \label{eq:heat-cutoff}
\end{equation}
The Laplacian bound holds for the global generator.
\end{lemma}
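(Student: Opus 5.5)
The plan has two independent parts: heat-kernel estimates and cutoff construction.

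\textbf{Heat-kernel bounds.} These will come from the standard Grigor'yan--Saloff-Coste equivalence, localized. By Lemma~\ref{lem:local-volume-segment}, $\mathfrak m$ is locally doubling on $B_d(p,4L+4)$. By Proposition~\ref{prop:local-functional}, the local $L^2$-Poincar\'e inequality holds there. The Sobolev identification in Subsection~\ref{subsec:local-functional} shows that the Dirichlet form of $P_s$ is the strongly local regular form with intrinsic distance $d$. This last point needs $X$ to be a length space with $\mathrm{RCD}$ structure, so that $d$-Lipschitz functions have $|\nabla u|\le\operatorname{Lip}u$ and the intrinsic metric agrees with $d$.

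With these inputs, Sturm's local theory for strongly local Dirichlet spaces gives the rest on balls of radius at most $1$ centered in $B_d(p,L)$. It yields the parabolic Harnack inequality and the Moser mean-value estimates, and hence two-sided Gaussian bounds for $0<s\le s_0$. The time restriction $s\le s_0$, together with the ball-size restriction, keeps all comparison balls inside $B_d(p,4L+4)$. Since $\mathfrak m(X)=1$ and the RCD semigroup is stochastically complete, no boundary issues arise. The constants depend only on the doubling and Poincar\'e constants, hence on $(n,L)$.

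\textbf{Cutoff construction.} Here I would follow the Cheeger--Colding/Chen--Wang heat-flow construction~\cite[Proposition~2.52]{ChenWangA}, as adapted in~\cite{HuangLiWang}.

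Take a Lipschitz radial profile $\psi=\phi(d_q/r)$, equal to one on $B_d(q,0.6r)$ and zero outside $B_d(q,0.9r)$. Set $\chi_1=P_{\delta r^2}\psi$ for a small fixed $\delta>0$. Then:
\begin{itemize}
\item \eqref{eq:heat-gradient} gives $|\nabla\chi_1|\le C/r$.
\item The Gaussian upper bound shows that $\chi_1$ is within $\kappa$ of $1$ on $B_d(q,r/2)$ and within $\kappa$ of $0$ outside $B_d(q,r)$. Here $\kappa$ can be made small by choosing $\delta$ small, using the volume comparison \eqref{eq:local-volume-comparison} for the tails.
\end{itemize}

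Next compose with a smooth $\Phi$ that equals $0$ on $[0,\kappa]$ and $1$ on $[1-\kappa,1]$, and set $\chi=\Phi(\chi_1)$. Then $\chi$ has the required support and plateau, is Lipschitz on $X$, and is smooth on $\mathcal R$, because positive-time heat solutions are smooth there. The chain rule gives
\[
 \Delta_f\chi=\Phi'(\chi_1)\Delta_f\chi_1+\Phi''(\chi_1)|\nabla\chi_1|^2 .
\]

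\textbf{Main obstacle.} The difficulty is bounding $|\Delta_f\chi_1|=|\partial_sP_s\psi|$ at $s=\delta r^2$ by $C/r^2$, pointwise and for the global generator. I would write $\partial_sP_s\psi=P_{s/2}(\Delta_fP_{s/2}\psi)$ in the distributional sense. The heat-kernel time-derivative estimate $|\partial_sH|\le Cs^{-1}\cdot$Gaussian, which follows from Gaussian bounds via analyticity (Davies) or the mean-value inequality applied to $\partial_sH$, then gives $\|\Delta_fP_s\psi\|_\infty\le C\|\psi\|_\infty/s$ locally.

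The phrase ``for the global generator'' needs care. Since $\chi_1$ lies in the domain of the generator and $\Phi$ is smooth, the chain rule holds in the RCD sense: the generator has no singular part on $\mathcal S$. This is justified by the zero-capacity estimate \eqref{eq:zero-capacity}, exactly as in the proof of Lemma~\ref{lem:global-bochner}. Finally, the condition $r_0\le\sqrt{s_0/2}$ ensures that $\delta r^2\le s_0$, so the local heat estimates apply.
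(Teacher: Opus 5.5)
Your heat-kernel part follows the paper's route. The paper also feeds local doubling and the local Poincar\'e inequality for the strongly local regular completed Dirichlet form into the standard local parabolic theory, via \cite[Theorem~3.1]{HuangLiWang}.

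For the cutoffs, you reach the Laplacian bound by a genuinely different route. With the single-time flow $\chi_1=P_{\delta r^2}\psi$, you need the extra input $|\Delta_fP_s\psi|\le C/s$ on the transition region, which is a time-derivative estimate for the heat kernel. This is available. The simplest way is to combine $\|\Delta_fP_\sigma\psi\|_{L^2}\le(e\sigma)^{-1}\|\psi\|_{L^2}$ from spectral calculus with the local mean-value inequality for the caloric function $\partial_\sigma P_\sigma\psi$ on $(s/2,s]\times B_d(x,\sqrt s)$. Local doubling then compares $\mathfrak m(B_d(q,r))$ with $\mathfrak m(B_d(x,\sqrt{\delta}\,r))$.

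The paper instead follows \cite[Theorem~3.6]{HuangLiWang} and uses heat averages over a time window comparable to $r^2$. For such an average, e.g.\ $r^{-2}\int_{r^2}^{2r^2}P_s\psi\,\mathrm ds$, the generator equals $r^{-2}(P_{2r^2}\psi-P_{r^2}\psi)$. The $C/r^2$ bound then follows from $0\le P_s\psi\le1$ alone, and the restriction $r_0\le\sqrt{s_0/2}$ keeps such a window inside $(0,s_0]$. Your version costs one more analytic lemma but yields the same estimate with the same dependence of constants.

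One step in your write-up needs repair. The Gaussian bounds are only local, so they do not show $\chi_1\le\kappa$ at points of $X\setminus B_d(q,r)$ lying outside the region where those bounds hold. You need this for $\Phi(\chi_1)$ to be supported in $B_d(q,r)$. There are two fixes:
\begin{itemize}
\item Do as the paper does. Truncate so that $\Phi(\chi_1)$ vanishes on a separating annulus such as $B_d(q,r)\setminus B_d(q,0.95r)$, where the local bounds do apply. Then extend by zero; the result stays Lipschitz and lies in the global generator domain by locality.
\item Apply the weak maximum principle to $(P_s\psi-\kappa/2)_+$ on the exterior region. This works because that function vanishes on the annulus for all $s\le\delta r^2$ and has zero initial data there.
\end{itemize}

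A smaller point: the generator chain rule for $\Phi(\chi_1)$ holds in general for strongly local Dirichlet forms. The zero-capacity estimate \eqref{eq:zero-capacity} does not justify the chain rule; what it does is identify that generator with $\Delta_f$ on $\mathcal R$, which is what gives smoothness there.
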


\begin{proof}
The completed Dirichlet form is strongly local and regular.
Local doubling and the Poincar\'e inequality give the heat kernel
and mean-value estimates by the argument of
\cite[Theorem~3.1]{HuangLiWang}. Lemma~\ref{lem:global-bochner} justifies the corresponding local
gradient estimates, as in \cite[Lemma~3.5]{HuangLiWang}.

Apply the heat-cutoff construction of
\cite[Theorem~3.6]{HuangLiWang} to a Lipschitz bump, using
positive-time heat averages on a time interval comparable to
$r^2$. These averages belong to the global generator domain,
with gradient bounded by $C/r$ and Laplacian bounded by $C/r^2$.
The Gaussian bounds permit a smooth scalar truncation that
vanishes on a separating annulus, so its zero extension belongs to
the global generator domain. A bounded finite covering then gives
the required support and half-ball value. The generator chain rule
proves \eqref{eq:heat-cutoff} on $X$. Positive-time regularity gives
smoothness on $\mathcal R$.
\end{proof}

We now prove the scale-uniform estimates used in the geodesic
comparison. Fix a bounded region, a lower and an upper bound for
geodesic length, and a positive distance from the endpoints as a
fraction of that length. Rescale the length to one. The rescaled
weighted Ricci tensor is still nonnegative, and all constants below
are uniform in these parameters. Fix $0<\delta<1/10$ and a unit-speed
regular minimizing geodesic $\gamma:[0,1]\to\mathcal R$. Put
\[
 q_-=\gamma(0),\qquad q_+=\gamma(1),\qquad
 d^-=d(q_-,\cdot),\qquad d^+=1-d(q_+,\cdot).
\]
Set $e=d^--d^+$. Choose a nonnegative outer cutoff $\psi$, supported in a fixed
bounded region away from $q_-$ and $q_+$ and equal to one on a
neighborhood of the region used below. The cutoff may be chosen
with uniform gradient and Laplacian bounds using
Lemma~\ref{lem:local-heat-tools}. In particular, it equals one when
both endpoint distances lie between $\delta/4$ and $2$, and the
support lies where both are at least $\delta/8$. Define
\begin{equation}
 h_s^\pm=P_s(\psi d^\pm),\qquad
 e_s=h_s^--h_s^+=P_s(\psi e)\ge0.
 \label{eq:heat-distances}
\end{equation}
All balls below are small enough to lie in the region where $\psi=1$,
with a fixed buffer from its boundary.

\begin{lemma}[Integrated heat approximation]
\label{lem:heat-approximation}
There are $C$ and $r_0>0$, uniform in the parameters just fixed,
such that, for $0<r<r_0$, the following hold.
\begin{enumerate}[label=\textup{(\roman*)}]
\item On the buffered region, for small $s>0$,
\begin{equation}
 |h_s^\pm-d^\pm|\le Cs+e,
 \qquad |\nabla h_s^\pm|^2\le1+Cs.
 \label{eq:heat-pointwise}
\end{equation}
For $t\in[\delta,1-\delta]$,
\begin{equation}
 \fint_{B_d(\gamma(t),r)}e\dd\mathfrak m\le Cr^2.
 \label{eq:mean-excess}
\end{equation}
\item There is a single $s_r\in[r^2/2,2r^2]$ for which
\begin{equation}
 \int_\delta^{1-\delta}
 \fint_{B_d(\gamma(t),5r)}
 \bigl(|\nabla^2h_{s_r}^-|^2+|\nabla^2h_{s_r}^+|^2\bigr)
             \dd\mathfrak m\dd t\le C.
 \label{eq:integrated-hessian}
\end{equation}
\item Let $\sigma_z$ be a unit-speed regular minimizing geodesic from $q_-$ to a point
$z$ in the region under consideration. For every subinterval $J$
with $\sigma_z(J)$ contained in the buffered region,
\begin{equation}
 \int_J|\nabla h_{s_r}^--\nabla d^-|^2(\sigma_z(t))\dd t
                    \le C\bigl(e(z)+r^2\bigr).
 \label{eq:radial-energy}
\end{equation}
In particular, on a subinterval of length $v$ the corresponding
$L^1$ error is at most $C\sqrt v(\sqrt{e(z)}+r)$.
For a unit-speed regular minimizing geodesic from $q_+$ the analogous estimates hold
with $\nabla h_{s_r}^+-\nabla d^+$.
\end{enumerate}
\end{lemma}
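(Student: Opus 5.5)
This lemma is the weighted analogue of the Colding--Naber heat approximation of distance functions, in the form of \cite[Proposition~2.52]{ChenWangA} and \cite[Sections~3--4]{HuangLiWang}. I will transplant that argument to $(X,d,\mathfrak m)$. The only inputs needed are the local doubling, Poincar\'e and segment inequalities of Lemma~\ref{lem:local-volume-segment} and Proposition~\ref{prop:local-functional}, the Bochner inequality of Lemma~\ref{lem:global-bochner}, and the heat kernel bounds and cutoffs of Lemma~\ref{lem:local-heat-tools}. Weak convexity (Theorem~\ref{thm:weak}) enters only through these tools and through the existence, for almost every $z$, of the regular segments $\sigma_z$ used in (iii).

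For (i), I start from the weak Laplacian comparison at the end of the proof of Lemma~\ref{lem:local-volume-segment}. On the region where both endpoint distances are at least $\delta/8$, it gives $\Delta_f d^-\le C$ and $\Delta_f d^+\ge -C$ distributionally, with uniform $C$. Since $\psi d^\pm$ is $C$-Lipschitz with measure-valued Laplacian bounded on one side, differentiating $P_s(\psi d^\pm)$ in $s$ gives $h^-_s\le d^-+Cs$ and $h^+_s\ge d^+-Cs$ on the buffered region; the heat kernel Gaussian bounds control the contribution from where $\psi\ne1$. The reverse inequalities come from $d^+\le d^-$, i.e.\ $e\ge0$ by the triangle inequality. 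For instance, $h^-_s\ge h^+_s\ge d^+-Cs=d^--e-Cs$. This is why the excess $e$ appears in \eqref{eq:heat-pointwise}. The gradient bound follows from \eqref{eq:heat-gradient} together with $|\nabla(\psi d^\pm)|^2\le1+C$ away from where $\psi$ varies, again localized with the Gaussian bounds. For \eqref{eq:mean-excess}, note that $\Delta_f e\le C$ weakly and $e(\gamma(t))=0$. Testing against the heat kernel centered at $\gamma(t)$ at time $r^2$, and using its Gaussian lower bound on $B_d(\gamma(t),r)$, gives $\fint_B e\le C P_{r^2}e(\gamma(t))\le C(e(\gamma(t))+Cr^2)=Cr^2$. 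Here the mean-value inequality of Lemma~\ref{lem:local-heat-tools} serves as the subsolution estimate.

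For (ii), I integrate \eqref{eq:weighted-Bochner} against $\chi^2$, where $\chi$ is a cutoff of \eqref{eq:heat-cutoff} on $B_d(\gamma(t),10r)$, over $s\in[r^2/4,4r^2]$. This yields
\[
\int_{r^2/2}^{2r^2}\!\!\int\chi^2|\nabla^2h^\pm_s|^2
\le C r^{-2}\!\int_{r^2/4}^{4r^2}\!\!\int_{B(\gamma(t),10r)}\bigl(|\nabla h_s^\pm|^2-1\bigr)_+ +C\,\mathfrak m(B)
\]
plus boundary terms controlled by $\int(1-|\nabla h|^2)$. Bounding that quantity is the main obstacle. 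I would follow Colding--Naber: use $|\nabla h|^2\le1+Cs$ from \eqref{eq:heat-pointwise}, and convert $\int(1-|\nabla h^\pm|^2)$ into $\int\partial_s$ of $h$ against a test function, which is bounded by $r^{-2}\fint e+Cr^2$ via \eqref{eq:mean-excess}. This gives $\fint_{B(\gamma(t),5r)}\fint_s|\nabla^2h_s|^2\le C$ uniformly in $t$ after the $r^{-2}$ normalization; the scaling must be checked carefully. Integrating in $t$ and applying Fubini and the mean value theorem in $s$ selects one $s_r\in[r^2/2,2r^2]$ satisfying \eqref{eq:integrated-hessian} for both signs simultaneously, by summing the two quantities before choosing $s_r$.

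For (iii), along $\sigma_z$ I have $\frac{d}{dt}d^-(\sigma_z)=1$. Hence
\[
\tfrac12|\nabla h-\nabla d^-|^2\le \tfrac12(|\nabla h|^2-1)+\bigl(1-\langle\nabla h,\dot\sigma_z\rangle\bigr).
\]
The first term is at most $Cs_r\le Cr^2$ by \eqref{eq:heat-pointwise}. The second term integrates to $|J|-[h(\sigma_z)]_{\partial J}$, which by (i) is at most $C(e(z)+r^2)$, since $e$ is monotone along $\sigma_z$ and bounded by $e(z)$. The $L^1$ bound then follows by Cauchy--Schwarz. The $q_+$ case is symmetric.
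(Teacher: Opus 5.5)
Your arguments for (i) and (iii) are the standard Colding--Naber ones and are fine. The one-sided bounds on $\Delta_f(\psi d^\pm)$ together with $e\ge0$ give \eqref{eq:heat-pointwise}. The Gaussian lower bound together with $e_{r^2}(\gamma(t))\le Cr^2$ gives \eqref{eq:mean-excess}. Telescoping $1-\langle\nabla h,\dot\sigma_z\rangle$ along $\sigma_z$, with $e$ nondecreasing along rays from $q_-$, gives \eqref{eq:radial-energy}.

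\textbf{The gap is in (ii)}, exactly where you flagged the scaling. Your proposed bound for the gradient deficit has the form $C(r^{-2}\fint_B e+r^2)$. By \eqref{eq:mean-excess} this is $O(1)$, which is no better than the trivial bound $1-|\nabla h_s|^2\le1$.

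In the Bochner inequality integrated over an $s$-interval of length comparable to $r^2$, the deficit enters in two places: the endpoint term, and the bulk term through the sign-indefinite $\Delta_f\chi^2$, where $|\Delta_f\chi^2|\le Cr^{-2}$. An $O(1)$ deficit therefore yields only $\int_{r^2/2}^{2r^2}\fint\chi^2|\nabla^2h_s|^2\dd\mathfrak m\dd s\le C$. That is an $s$-averaged Hessian bound of $Cr^{-2}$, not $C$.

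To close the argument you need $\fint(1-|\nabla h_s^\pm|^2)_+\le Cr^2$, a genuinely second-order gain. Nothing available pointwise in $t$ provides it: the $C^0$ closeness $|h_s-d|\le Cs+e$ on a ball of radius $r$ controls first-order deficits only at size $r^2/r=r$. In particular, the uniform-in-$t$ Hessian bound you assert is stronger than the lemma and is not what Colding--Naber or Huang--Li--Wang prove. The $t$-integral in \eqref{eq:integrated-hessian} is essential. It is also why the later distortion estimate carries $\sqrt v$: that factor comes from Cauchy--Schwarz in $t$.

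\textbf{The missing idea is to integrate along $\gamma$ before invoking Bochner, so (iii) must precede (ii).} Since $|\nabla d^-|=1$ and $1-|\nabla h_s^-|^2\ge-Cs$,
\[
 \bigl(1-|\nabla h_s^-|^2\bigr)_+\le2\bigl\langle\nabla d^-,\nabla(d^--h_s^-)\bigr\rangle+Cs .
\]
Along a ray from $q_-$, the first term integrates to endpoint values of $d^--h_s^-$. By (i) these are bounded by $Cs+e$, and $e$ is nondecreasing along the ray; this is precisely your (iii) computation.

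Averaging this over endpoints, using the weighted polar comparison that weak convexity supplies (Lemma~\ref{lem:local-volume-segment}) together with \eqref{eq:mean-excess}, produces a $t$-integrated deficit bound
\[
 \int_\delta^{1-\delta}\fint_{B_d(\gamma(t),10r)}\bigl(1-|\nabla h_s^\pm|^2\bigr)_+\dd\mathfrak m\dd t\le Cr^2
\]
for $s$ comparable to $r^2$. Inserting this into Bochner, integrated in both $s$ and $t$, gives the double estimate $\int_{r^2/2}^{2r^2}\int_\delta^{1-\delta}\fint|\nabla^2h_s^\pm|^2\le Cr^2$ that appears in the paper's proof. Only then does your averaging in $s$ correctly select $s_r$.
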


\begin{proof}
Lemma~\ref{lem:local-volume-segment},
Proposition~\ref{prop:local-functional}, and
Lemma~\ref{lem:local-heat-tools}, together with the global Bochner
inequality \eqref{eq:weighted-Bochner}, provide the inputs for the
heat-approximation argument of \cite[Section~2]{ColdingNaber} and
\cite[Theorem~3.12 and Lemmas~3.13--3.16]{HuangLiWang}.
The same argument gives (i)--(iii) on the completed weighted space.
The integrated energy estimate for (ii) is
\[
 \int_{r^2/2}^{2r^2}\int_\delta^{1-\delta}
 \fint_{B_d(\gamma(t),5r)}
 \bigl(|\nabla^2h_s^-|^2+|\nabla^2h_s^+|^2\bigr)
             \dd\mathfrak m\dd t\dd s\le Cr^2.
\]
Since the heat-time interval has length $3r^2/2$, averaging gives
one $s_r\in[r^2/2,2r^2]$ for which
\eqref{eq:integrated-hessian} holds, after adjusting $C$.
\end{proof}

\subsection{Uniform comparison of small balls}
\label{subsec:ball-comparison}

Following the Colding--Naber argument, we prove a uniform modulus
of continuity for small balls along regular minimizing geodesics.
All balls carry the distance restricted from $X$.

\begin{proposition}[Small balls along regular minimizing geodesics]
\label{prop:small-ball-comparison}
Fix $D,L,\ell_0>0$ and $0<\delta<1/2$. For every $\epsilon>0$
there are $a_0,r_0>0$, depending only on these parameters, $n$,
and $\epsilon$, such that the following holds. If
$\gamma:[0,\ell]\to\mathcal R$ is a unit-speed minimizing geodesic
contained in $B_d(p,D)$, with $\ell_0\le\ell\le L$, then
\begin{equation}
 d_{\mathrm{GH}}\bigl(\overline B_d(\gamma(s),r),
             \overline B_d(\gamma(t),r)\bigr)\le\epsilon r
 \label{eq:small-ball-comparison}
\end{equation}
whenever $s,t\in[\delta\ell,(1-\delta)\ell]$,
$|s-t|\le a_0\ell$, and $0<r\le r_0\ell$.
In particular, the constants do not depend on the smallest
regularity scale along $\gamma$.
\end{proposition}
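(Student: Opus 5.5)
We will follow the Colding--Naber scheme for the Hölder continuity of tangent cones, in the form adapted to weighted singular spaces by Huang--Li--Wang, using Lemma~\ref{lem:heat-approximation} as the analytic input. First rescale so that $\ell=1$. The rescaled weighted Ricci tensor stays nonnegative, and the bounded region is preserved up to factors controlled by $D,L,\ell_0$. Every constant in Lemmas~\ref{lem:local-volume-segment}, \ref{lem:local-heat-tools} and~\ref{lem:heat-approximation} is uniform in these parameters and never refers to $\rRm$ along $\gamma$, so the resulting $a_0,r_0$ will not depend on the regularity scale.

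The core step is to build, for $s<t$ in $[\delta,1-\delta]$ with $|s-t|\le a_0$, an $\epsilon r$-GH approximation $B_d(\gamma(t),r)\to B_d(\gamma(s),r)$ by flowing along $-\nabla h^-_{s_r}$. Here $s_r$ is the single heat time from Lemma~\ref{lem:heat-approximation}(ii). Let $\Phi_u$ be the flow of $-\nabla h^-_{s_r}$ for time $u=t-s$, defined on the regular part.

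We use the integrated Hessian bound \eqref{eq:integrated-hessian} and a Chebyshev selection in $t$. The segment inequality \eqref{eq:local-segment-inequality} converts this into an estimate along almost every pair of flow lines: for most $x,y$ in a small ball, the quantity
\[
 \int_0^u|\nabla^2h^-_{s_r}|(\Phi_\sigma(\text{geodesic from }x\text{ to }y))\,\mathrm d\sigma
\]
is small. Hence $d(\Phi_u x,\Phi_u y)$ differs from $d(x,y)$ by at most $\epsilon r$, after passing to a subset of large measure.

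Two further controls are needed. First, that flow lines stay in the buffered region and have the right length. This comes from \eqref{eq:heat-pointwise} and \eqref{eq:mean-excess}, together with the radial energy estimate \eqref{eq:radial-energy}: these show that $\Phi_\sigma$ moves most points near $\gamma(t)$ close to $\gamma(t-\sigma)$. Second, that flow lines avoid $\mathcal S$. Since $\mathcal S$ has measure zero and the flow is regular on $\mathcal R$, we work only with almost every starting point. Measure distortion is controlled by $\Delta_f h$ bounds, using the Jacobian $\exp(\int\Delta_f h)$ and the cutoff Laplacian bound \eqref{eq:heat-cutoff}, so images of large-measure sets retain large measure.

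Next we upgrade the large-measure almost-isometry to a genuine $\epsilon r$-GH map. By the local volume comparison \eqref{eq:local-volume-comparison} and \eqref{eq:thin-annulus}, every point of either ball lies within $\epsilon r$ of the good set. Applying the same construction with $+\nabla h^+_{s_r}$ gives a map in the reverse direction. This yields the estimate for $|s-t|$ of order $r$.

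For general $|s-t|\le a_0$ we iterate along a chain of times. The errors sum as in Colding--Naber, because the Hessian integrals over the chain are controlled by the single integral in \eqref{eq:integrated-hessian}, rather than accumulating per step. This is exactly why \eqref{eq:integrated-hessian} is stated as one integral over all of $[\delta,1-\delta]$.

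I expect the main obstacle to be the distance-distortion step near $\mathcal S$. Flow lines of a regular vector field may run into the singular set, and $h^-_{s_r}$ is only smooth on $\mathcal R$. My plan is to handle this measure-theoretically. Lemma~\ref{lem:global-bochner} makes $|\nabla^2h|$ an $L^2_{\mathrm{loc}}$ function on all of $X$ with zero extension. The bad set then has small measure by \eqref{eq:zero-capacity} and the codimension-four estimate \eqref{eq:slice-bad-volume}, so we can discard flow lines entering $\{\rRm<\rho\}$ at a measure cost tending to zero as $\rho\to0$, uniformly in all other parameters. Finally, density via \eqref{eq:thin-annulus} closes the argument.
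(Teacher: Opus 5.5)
\paragraph{Main gap: non-exit from the moving balls.}
Your proposal is missing the step that carries the Colding--Naber argument: proving that flow lines stay inside the moving balls. The bound \eqref{eq:integrated-hessian} is an average over $B_d(\gamma(t),5r)$. The pairwise distortion estimate you extract from it via \eqref{eq:local-segment-inequality} is therefore valid only while both trajectories remain in $B_d(\gamma(t-\sigma),5r)$. The flow time $v=|s-t|$ may be much larger than $r$, so this is the whole difficulty, and nothing in your plan addresses it.

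The paper handles it in three steps:
\begin{enumerate}
\item It stops the flows when they leave doubled moving balls.
\item Under temporary half-volume bounds, it obtains pairwise distortion at most $C\eta^{-2}r\sqrt v$ before exit.
\item It removes the stopping by connecting good points through a chain of radii $r,\xi r,\ldots,\xi^Jr$ ending in a smooth core around $\gamma$, where $d(\Phi_vz,\gamma(b-v))\le e^{C\sqrt v}d(z,\gamma(b))$. The opposite endpoint flow and \eqref{eq:thin-annulus} strictly improve the half-volume bounds, so the non-exit time is independent of $J$ and of the core radius.
\end{enumerate}
This is where independence from the regularity scale is actually earned. It does not follow just from the constants in Lemmas~\ref{lem:local-volume-segment}--\ref{lem:heat-approximation} being uniform.

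Your substitute, iterating over time steps of order $r$, does not work. Each step's GH bound contains a density term of order $\eta^{1/n}r$ from upgrading a large-measure almost-isometry. That term is not proportional to the step length, so about $a_0/r$ steps give an error of order $a_0\eta^{1/n}$, not $\epsilon r$. If you instead mean to compose the flows and integrate the Hessian once over $[0,v]$, you are back to needing non-exit.

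\paragraph{Secondary problem: the choice of flow.}
The paper flows along $-\nabla d^-$ and $+\nabla d^+$, that is, along minimizing segments to $q_\mp$. This choice is what makes the inputs fit:
\begin{itemize}
\item Theorem~\ref{thm:weak} makes almost every flow line a regular minimizing segment.
\item The flow moves $\gamma(t)$ exactly to $\gamma(t-v)$.
\item Weighted polar comparison gives \eqref{eq:radial-compression}.
\item The no-corner and no-conjugate-point argument makes $d^\pm$ smooth near interior points of $\gamma$, which supplies the smooth core.
\end{itemize}

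Estimate \eqref{eq:radial-energy} is a bound along minimizing segments from $q_-$, i.e.\ along $d^-$-trajectories. It controls the endpoint terms $|\nabla h-\nabla d^-|(\Phi_vx)$ and $|\nabla h-\nabla d^-|(\Phi_vy)$ in the first-variation bound for $\frac{\mathrm d}{\mathrm dv}d(\Phi_vx,\Phi_vy)$. It says nothing about trajectories of $-\nabla h^-_{s_r}$. So your claim that it keeps those trajectories near $\gamma(t-\sigma)$ is unsupported, and with the $h$-flow you would need a separate argument locating the image of the center.

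Two further points in your plan are not justified as written:
\begin{itemize}
\item $\mathfrak m(\mathcal S)=0$ does not imply that almost every $h$-trajectory avoids $\mathcal S$. Discarding trajectories that enter $\{\rRm<\rho\}$ needs a flux or time-spent estimate, not just smallness of that set.
\item \eqref{eq:heat-cutoff} bounds the Laplacian of cutoff functions, not $\Delta_f h$.
\end{itemize}
These points might be repairable, but the non-exit argument is the missing idea.
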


\begin{proof}[Sketch of proof]
Normalize $\ell=1$ and work on a fixed interior interval, with a
slightly larger interval available for the estimates. We follow
\cite[Lemmas~4.1,~4.2,~4.4 and Theorem~4.5]{HuangLiWang}.
By Theorem~\ref{thm:weak}, the inward radial flows toward $q_-$
and $q_+$ are defined along regular minimizing segments for almost
every initial point. Their vector fields are $-\nabla d^-$ and
$+\nabla d^+$, respectively. Weighted polar comparison gives,
for either flow $\Phi_v$ on an annulus away from its endpoint,
\begin{equation}
 \mathfrak m(\Phi_v A)\ge e^{-Cv}\mathfrak m(A),\qquad
 (\Phi_v)_\#(\mathfrak m|_A)\le e^{Cv}\mathfrak m.
 \label{eq:radial-compression}
\end{equation}
For $t\in(0,1)$, concatenate any ambient minimizing geodesic from
$q_-$ to $\gamma(t)$ with a short continuation of $\gamma$ beyond
$t$; the resulting curve is still minimizing.
At the regular junction it has no corner, so backward uniqueness
for the smooth geodesic equation along $\gamma$ identifies the
original curve with $\gamma|_{[0,t]}$.
Properness then implies that all minimizing geodesics from $q_-$
to nearby endpoints remain in a regular neighborhood of this compact
segment. Since $\gamma$ continues minimizing beyond $t$, there is
no conjugate point at $\gamma(t)$; the local inverse of the
exponential map therefore shows that $d^-$ is smooth near
$\gamma(t)$. The reversed argument applies to $q_+$, and compactness
gives a common smooth neighborhood along any closed interior
interval.

Now let $\Phi_v$ denote the inward flow toward $q_-$.
The Riccati identity, endpoint Laplacian comparison, and the Jacobi
equation give the usual core estimate
$d(\Phi_vz,\gamma(b-v))\le e^{C\sqrt v}d(z,\gamma(b))$.
The core radius may depend on $\gamma$, but the constant does not;
see the argument of \cite[Lemma~4.1]{HuangLiWang}.
The corresponding estimate toward $q_+$ follows by reversing
$\gamma$.

Stop the flows when they leave doubled moving balls. Under the
temporary half-volume bounds for the surviving points,
Lemma~\ref{lem:heat-approximation}, the segment inequality, and
\eqref{eq:radial-compression} control the averaged distance
distortion. Chebyshev's inequality gives a good set $T$ and partner
sets $T(x)$, whose relative complements have measure at most
$C\eta$, with pairwise distortion at most
$C\eta^{-2}r\sqrt v$ before exit. To remove the stopping assumption,
choose $0<\alpha\ll1$, $0<\xi\ll\alpha$, and
$0<\eta\ll\xi^n$. Connect good points through the finite chain of
radii $r,\xi r,\ldots,\xi^Jr$ ending inside the smooth core.
The distortion estimate prevents exit from the moving
$(1+\alpha)r$ balls for a time depending only on the uniform
parameters. Applying the opposite endpoint flow and
\eqref{eq:thin-annulus} strictly improves the half-volume bounds,
so this time is independent of $J$ and of the core radius.
This is the stopping argument of
\cite[Lemmas~4.2 and~4.4]{HuangLiWang}.

Relative volume comparison makes $T$ $C\eta^{1/n}r$-dense in the
initial ball. Choosing common partners extends the distortion
estimate to all pairs in $T$. The opposite endpoint flow and
\eqref{eq:radial-compression} also show that the image misses at
most a $C(\alpha+\eta+v)$ fraction of the target ball's measure.
Consequently, it is $C(\alpha+\eta+v)^{1/n}r$-dense there.
Moving image points outside the target ball into it by at most
$\alpha r$ gives
\[
 \begin{split}
 &r^{-1}d_{\mathrm{GH}}\bigl(\overline B_d(\gamma(b),r),
                    \overline B_d(\gamma(b-v),r)\bigr)\\
 &\qquad\le C\bigl(\eta^{-2}\sqrt v+\eta^{1/n}
                    +(\alpha+\eta+v)^{1/n}+\alpha\bigr).
 \end{split}
\]
Choose $\alpha$ small, then $\xi$ and $\eta$ as above, and finally
$v$ sufficiently small. This proves
\eqref{eq:small-ball-comparison} with the stated uniformity.
\end{proof}

\subsection{Regular convexity}
\label{subsec:regularity-propagation}

\begin{lemma}[Limit geodesics with a regular endpoint]
\label{lem:regular-limit-geodesic}
For every $x\in\mathcal R$ and $y\in X$, there is a minimizing
geodesic $\sigma:[0,1]\to X$ from $x$ to $y$ with
$\sigma([0,1))\subset\mathcal R$.
\end{lemma}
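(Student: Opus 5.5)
The plan is to obtain $\sigma$ as a limit of the regular minimizing geodesics from Theorem~\ref{thm:weak}, and then use Proposition~\ref{prop:small-ball-comparison} together with Lemma~\ref{lem:entropy-density} to show that the limit stays regular before its endpoint.

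\emph{Approximating geodesics.} Fix $x\in\mathcal R$ and $y\in X$. The set $\mathcal R$ is dense in $X$, and by Theorem~\ref{thm:weak} the exceptional set $N_x$ is $\dV_g$-null. Balls around $y$ meet $\mathcal R$ in sets of positive measure, so I can choose $y_j\in\mathcal R\setminus N_x$ with $y_j\to y$ in $d$. Each $y_j$ is joined to $x$ by a regular minimizing geodesic $\gamma_j:[0,1]\to\mathcal R$ of constant speed $d(x,y_j)$. All these curves lie in a fixed ball $B_d(p,D)$. Since $X$ is proper and the curves are equi-Lipschitz, Arzel\`a--Ascoli gives a subsequence converging uniformly to a curve $\sigma$ from $x$ to $y$ of length at most $d(x,y)$. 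Hence $\sigma$ is a minimizing geodesic. If $y=x$ there is nothing to prove, so I assume $\ell=d(x,y)>0$. Then the lengths $\ell_j=d(x,y_j)$ lie in $[\ell/2,2\ell]$ for large $j$.

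\emph{Tangent cones along $\sigma$.} Fix $t_0\in(0,1)$ and choose $\delta<\min\{t_0,1-t_0\}/2$. For the $\gamma_j$, Proposition~\ref{prop:small-ball-comparison} applies with parameters independent of $j$. So for every $\epsilon>0$ there are $a_0,r_0$ such that
\[
 d_{\mathrm{GH}}\bigl(\overline B_d(\gamma_j(s),r),\overline B_d(\gamma_j(t),r)\bigr)\le\epsilon r
\]
for $s,t\in[\delta,1-\delta]$ with $|s-t|\le a_0$, and for $0<r\le r_0\ell$. The closed balls of $X$ vary continuously in the pointed Gromov--Hausdorff sense under uniform convergence of their centers; this is the standard property of a proper length space, applied at fixed $r$. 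Passing $j\to\infty$ at fixed $r$ therefore gives the same estimate for $\sigma$ on $[\delta,1-\delta]$ with constant $2\epsilon$. Near $x$ the points $\sigma(s)$ for small $s$ lie in a smooth ball. I chain finitely many steps of length $a_0$ from $s=\delta$ (with $\delta$ chosen so that $\sigma(\delta)$ is regular with curvature radius $\ge r_1$) to $s=t_0$. This yields
\[
 d_{\mathrm{GH}}\bigl(\overline B_d(\sigma(t_0),r),B_{\mathbb R^n}(0,r)\bigr)\le C(a_0,\delta)\,\epsilon r
\]
for all $r\le\min\{r_0\ell,r_1\}$. Here I use that balls of radius $r\le r_1$ around $\sigma(\delta)$ are $\epsilon r$-close to Euclidean.

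The step I expect to be the main obstacle is the choice of order of quantifiers. The number of chain steps depends on $a_0$, which depends on $\epsilon$. So the plan is to choose the target closeness $\epsilon'$ first, and then choose $\epsilon$ in Proposition~\ref{prop:small-ball-comparison} with $\epsilon\cdot\lceil 1/a_0(\epsilon)\rceil\le\epsilon'$. This is not automatic. If it fails, I would instead argue by contradiction at the first singular time $t_*=\inf\{t:\sigma(t)\in\mathcal S\}$. If $t_*<1$, then applying the comparison once between $\sigma(t_*)$ and $\sigma(t_*-a)$, for suitable small $a$, compares a singular point with nearby regular points at all small scales $r\le r_0\ell$.

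\emph{Concluding regularity.} Rescaling by $r\to0$ shows that every tangent cone at $\sigma(t_0)$ is $C\epsilon'$-close to $\mathbb R^n$ in the unit ball. Volume convergence for noncollapsed Ricci-flat cones, or alternatively the continuity of entropy under convergence, then gives $\vartheta(C)>e^{-\epsilon_n}$ once $\epsilon'$ is small. By \eqref{eq:singular-density-gap} in Lemma~\ref{lem:entropy-density}, $\sigma(t_0)\in\mathcal R$. In the fallback argument, the same density gap shows that $\sigma(t_*)$ is regular, which contradicts the definition of $t_*$ since $\mathcal R$ is open. In either case $\sigma([0,1))\subset\mathcal R$.
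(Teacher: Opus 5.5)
Your primary chaining argument does not close, and the difficulty you flagged is real rather than a matter of bookkeeping. Proposition~\ref{prop:small-ball-comparison} gives $a_0(\epsilon)$ with no quantitative relation to $\epsilon$. Chaining $\lceil 1/a_0(\epsilon)\rceil$ steps, each costing $2\epsilon r$, from $\sigma(\delta)$ to $\sigma(t_0)$ produces an error of order $\epsilon/a_0(\epsilon)$. For this to tend to zero you would need a modulus of continuity that is $o(|s-t|)$, and nothing in the paper provides one. Colding--Naber-type estimates give a H\"older modulus, which by itself does not prevent small balls from genuinely changing along a geodesic. So you cannot compare $\sigma(t_0)$ with the smooth region near $x$ globally in this way.

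Your fallback is correct and is essentially the paper's proof. The paper shows that $\{t\in(0,1):\sigma(t)\in\mathcal R\}$ is open, nonempty and relatively closed. It transfers closeness in a single step from a regular point $\sigma(t_i)$ with $|t_i-t|\le a_0(\epsilon)$. Your first singular time $t_*$ is the same connectedness argument.

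When you write it out, order the quantifiers as the paper does:
\begin{enumerate}
\item fix $\epsilon$;
\item choose $a<a_0(\epsilon)$, so that $\sigma(t_*-a)$ is regular;
\item let $r\searrow0$, using smoothness at $\sigma(t_*-a)$ (legitimate because the comparison holds uniformly for $0<r\le r_0\ell$);
\item only then let $\epsilon\searrow0$.
\end{enumerate}
This gives $\limsup_{r\searrow0}r^{-1}d_{\mathrm{GH}}\bigl(\overline B_d(\sigma(t_*),r),\overline B_{\mathbb R^n}(0,r)\bigr)=0$. Hence every tangent cone at $\sigma(t_*)$ is exactly Euclidean, and the last assertion of Lemma~\ref{lem:entropy-density} applies directly.

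You therefore do not need volume convergence for singular Ricci-flat cones, nor continuity of entropy under Gromov--Hausdorff convergence. Neither is established in the paper, and your approximate-closeness version in the concluding paragraph would require one of them.
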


\begin{proof}
The statement is immediate for $y=x$. Otherwise, choose good
endpoints $y_j\to y$ from Theorem~\ref{thm:weak} and regular
minimizing geodesics $\gamma_j$ from $x$ to $y_j$, parametrized at constant
speed on $[0,1]$. The good endpoints are dense because their
complement has zero measure and $\mathfrak m$ has full support.
Properness gives a uniformly convergent subsequence with a minimizing
limit $\sigma$. The lengths converge to $d(x,y)>0$, and the curves
lie in a common bounded region.

Let $J=\{t\in(0,1):\sigma(t)\in\mathcal R\}$. This set is open
and nonempty, since $x$ is regular. We show that it is relatively
closed in $(0,1)$. Suppose $t_i\in J$ and $t_i\to t\in(0,1)$.
For any fixed $\epsilon>0$, Proposition~\ref{prop:small-ball-comparison}
gives uniform $a_0,r_0>0$ on an interior interval containing these
times. For $i$ large, pass its estimate from $\gamma_j$ to $\sigma$:
\begin{equation}
 d_{\mathrm{GH}}\bigl(\overline B_d(\sigma(t_i),r),
             \overline B_d(\sigma(t),r)\bigr)\le\epsilon r
 \quad(0<r<r_0).
 \label{eq:limit-ball-comparison}
\end{equation}
This passage is at a fixed radius. In a proper length space,
closed balls of the same radius converge in Hausdorff distance
when their centers converge, by moving points along minimizing
segments if necessary.

For this fixed $i$, smoothness at $\sigma(t_i)$ implies
\[
 \lim_{r\searrow0}r^{-1}d_{\mathrm{GH}}
 \bigl(\overline B_d(\sigma(t_i),r),
                    \overline B_{\mathbb R^n}(0,r)\bigr)=0.
\]
The triangle inequality and \eqref{eq:limit-ball-comparison} give
an upper limit at most $\epsilon$ for the analogous expression at
$\sigma(t)$. Since $\epsilon$ is arbitrary, every tangent cone at
$\sigma(t)$ has a unit ball isometric, as a metric space, to the
Euclidean unit ball. Its $n$-dimensional volume is therefore
$\omega_n$. Lemma~\ref{lem:entropy-density} gives
$\mathcal N_{\sigma(t)}(0)=0$, hence $\sigma(t)\in\mathcal R$.
This proves closedness. Consequently $J=(0,1)$, as required.
\end{proof}

\begin{proof}[Proof of Theorem~\ref{thm:convexity}]
Let $\gamma:[0,l]\to X$ be a nonconstant unit-speed minimizing
geodesic that meets $\mathcal R$. Openness gives $a\in(0,l)$ and
$h>0$ such that $\gamma([a-h,a+h])\subset\mathcal R$.
By Lemma~\ref{lem:regular-limit-geodesic}, there is a unit-speed
minimizer $\eta:[a,l]\to X$ from $\gamma(a)$ to $\gamma(l)$,
regular on $[a,l)$. Concatenating $\gamma|_{[a-h,a]}$ with $\eta$
gives a curve of length
\[
 h+(l-a)=d\bigl(\gamma(a-h),\gamma(l)\bigr),
\]
so the concatenation is minimizing. It has no corner at the regular
point $\gamma(a)$, and hence $\dot\eta(a)=\dot\gamma(a)$.
The smooth geodesic equation gives agreement near $a$ and continues
it along all of $\eta([a,l))$. More explicitly, a first time at
which agreement ceased would still be a regular point on $\eta$;
openness of $\mathcal R$ and local uniqueness would extend the
agreement beyond that time. Thus $\gamma((a,l))\subset\mathcal R$.
Apply the same argument to the reversed geodesic to obtain
$\gamma((0,a))\subset\mathcal R$.

This proves the assertion for every ambient minimizer that meets
the regular part. If both endpoints are regular, the whole curve
is a smooth minimizing $g$-geodesic. The homotheties
$\boldsymbol{\varphi}_\tau$ on the completed slices transfer the
conclusion to every negative time.
\end{proof}

\begingroup
\small
\bibliographystyle{alpha}
\bibliography{TangentFlows}

\vskip12pt
\noindent
Yu Li, Institute of Geometry and Physics, University of Science
and Technology of China, No.~96 Jinzhai Road, Hefei, Anhui Province,
230026, China; Hefei National Laboratory, No.~5099 West Wangjiang
Road, Hefei, Anhui Province, 230088, China.\\
E-mail: \href{mailto:yuli21@ustc.edu.cn}{yuli21@ustc.edu.cn}.

\medskip
\noindent
Bing Wang, Institute of Geometry and Physics, and School of
Mathematical Sciences, University of Science and Technology of
China, Hefei 230026, China; Hefei National Laboratory,
Hefei 230088, China.\\
E-mail: \href{mailto:topspin@ustc.edu.cn}{topspin@ustc.edu.cn}.
\endgroup

\end{document}